\def \cJ {\mathcal{J}}
\begin{document}

\title[Approximation of the marginal likelihood for tree models]{An asymptotic approximation of the marginal likelihood for general Markov models}
\author{{Piotr} {Zwiernik}}
\markboth{P. Zwiernik et al.}{Tree models for binary data}
\address{Piotr Zwiernik\\University of Warwick\\Department of Statistics\\CV7AL, Coventry, UK.}
\email{p.w.zwiernik@warwick.ac.uk}

\subjclass{}
\keywords{ BIC,  marginal likelihood, singular models, tree models, Bayesian networks, Laplace integrals, real canonical threshold}
\date{}

\begin{abstract}  The standard Bayesian Information Criterion (BIC) is derived under regularity conditions which are not always satisfied by the graphical models with hidden variables. In this paper we derive the BIC score for Bayesian networks in the case of binary data and when the underlying graph is a rooted tree and all the inner nodes represent hidden variables. This provides a direct generalization of a similar formula given by Rusakov and Geiger for naive Bayes models. The main tool used in this paper is a connection between asymptotic approximation of Laplace integrals and the real log-canonical threshold. \end{abstract}

\maketitle

\section{Introduction}

A key step in Bayesian approach to learning graphical models is to compute the \textit{marginal likelihood} of the data, i.e. the \textit{observed likelihood function} averaged over the parameters with respect to the prior distribution. Given a fully observed system the theory of  graphical models provides a simple way to obtain the marginal likelihood (see e.g. \cite{cooper1992bayesian}, \cite{heckerman1995lbn}). However, when some of the variables in the system are \textit{hidden} (i.e. never observed), the exact determination of the marginal likelihood is typically intractable (e.g. \cite{chickering1997efficient}, \cite{cooper1992bayesian}). Therefore, there is a need to develop efficient approximate techniques for computing the marginal likelihood. 

In this paper we focus on large sample approximations for the marginal likelihood called the \textit{BIC approximation}. Let $\cM$ be a parametric discrete model and $X^{(N)}=X^1,\ldots, X^N$ be a random sample from $\cM$ of size $N$. By $Z(N)$ we denote the marginal likelihood and by $L(\theta;X^{(N)},\cM)=\P(X^{(N)}|\cM,\theta)$ the observed likelihood function. Thus
\begin{equation}\label{eq:ZN0}
Z(N)=\P(X^{(N)}|\cM)=\int_{\Theta} L(\theta;X^{(N)},\cM)\varphi(\theta){\rm d}\theta,
\end{equation}
where $\theta$ denotes the model parameters, $\Theta\subseteq \R^{d}$ is the parameter space, and $\varphi(\theta)$ is a prior distribution on $\Theta$ given model $\cM$.

In statistical theory to obtain the BIC approximation we usually require that the observed likelihood is maximized over a unique point in the interior of the parameter space. For the class of problems for which this assumption is satisfied Schwarz \cite{schwarz1978edm} showed that as $N\rightarrow \infty$
\begin{equation}\label{eq:bicfinite}
\log Z(N)=\hat{\ell}_{N}-\frac{d}{2}\log N+O(1),
\end{equation}
where $\hat{\ell}_{N}$ is the maximum value of the log-likelihood and $d=\dim\Theta$. The same approximation works if the observed likelihood is maximized over a finite number of points. Geometrically, for large sample sizes function $Z(N)$ concentrates around the maxima (see Figure \ref{fig:finite}).
\begin{figure}[t]
\includegraphics[scale=0.3]{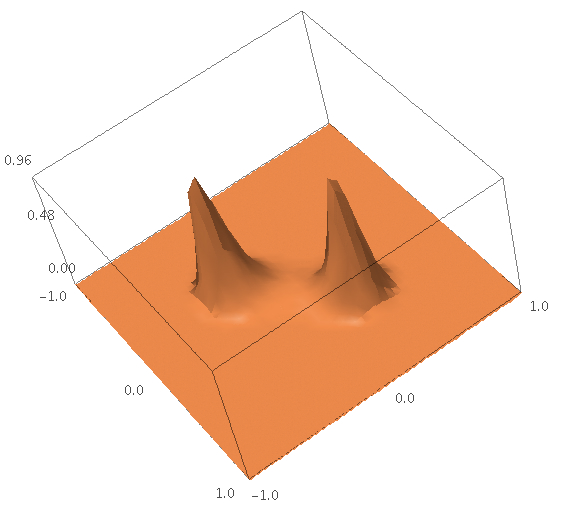}
\caption{The case when the observed likelihood is maximized over a finite number of points.}\label{fig:finite}
\end{figure}
This enables us to apply the Laplace approximation locally in the neighborhood of each maximum. 

It can be proved (see Proposition \ref{prop:regulargeneral}) that the above  formula can be generalized for the case when the set over which the likelihood is maximized forms a sufficiently regular compact subset of the ambient space (see Figure \ref{fig:smooth}). We denote this subset by $\widehat{\Theta}$. In this case as $N\rightarrow \infty$
\begin{equation}\label{eq:bicsmotth}
\log Z(N)=\hat{\ell}_{N}-\frac{d-d'}{2}\log N+O(1),
\end{equation}
where $d'=\dim \widehat{\Theta}$. Note that in our case $\widehat{\Theta}$ is a set of zeros of a real analytic function. Therefore it will be always a semi-analytic set, i.e. given by $\{g_{1}(\theta)\geq 0, \ldots, g_{r}(\theta)\geq 0\}$, where $g_{i}$ are all analytic functions. It follows that the dimension is well defined (see \cite[Remark 2.12]{bierstone1988semianalytic}). 
\begin{figure}[t]
\includegraphics[scale=0.3]{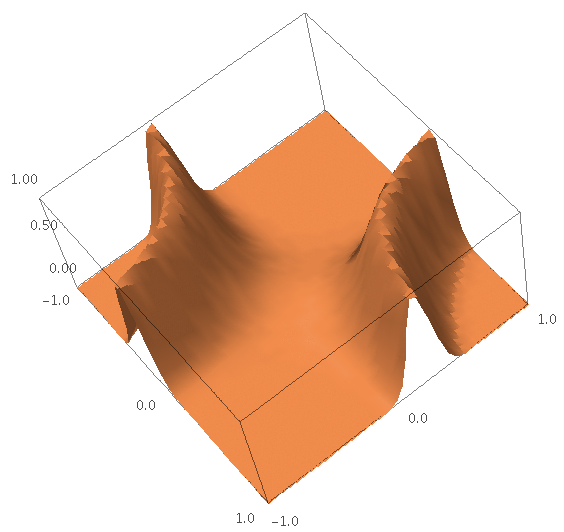}
\caption{The case when the observed likelihood is maximized over an infinite but smooth subset given by $xy=1$ for $x,y\in[-1,1]$.}\label{fig:smooth}
\end{figure}

In the case of models with hidden variables for some data sets the locus of the points maximizing the likelihood may not be sufficiently regular. In this case the likelihood will have a different asymptotic behavior around the singular points and relatively more mass of the marginal likelihood integral will be related to neighborhoods of singular points (see Figure \ref{fig:singular}).
\begin{figure}
\includegraphics[scale=0.3]{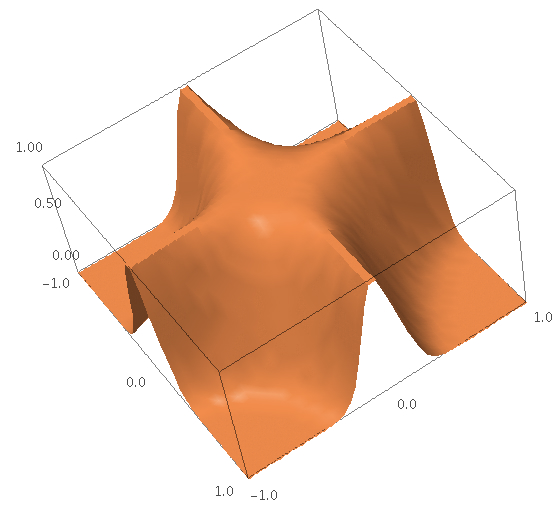}
\caption{The case when the observed likelihood is maximized over a singular subset given by $xy=0$ for $x,y\in[-1,1]$.}\label{fig:singular}
\end{figure}
For these points we cannot use the Laplace approximation. Nevertheless the computation of the BIC approximation is still possible by using the results of Watanabe \cite{watanabe_book} and linking this to some earlier works of Arnold, Varchenko and collaborators (see e.g. \cite{arnold1985singularities}). This approximation will differ from the standard BIC formula. First, the coefficient of $\log N$ can be different from $-\frac{d-d'}{2}$. Second, we sometimes obtain an additional $\log \log N$ term affecting the asymptotics (see Theorem \ref{th:watanabe}).

In this paper we consider an important model class with large number of hidden variables called the general Markov model. This model class is extensively used in phylogenetics (e.q. \cite[Chapter 8]{semple2003pol}) and in casuality analysis (e.g. \cite{pearl_tarsi86}). The general Markov model is a Bayesian network on a tree. Thus let $T=(V,E)$ be a tree with the vertex set $V$ and the edge set $E$. Let $T^{r}$ denote a \textit{ tree  rooted in $r$}, i.e. a tree with one distinguished vertex $r$ and all the edges directed away from $r$. Let $Y=(Y_{v})_{v\in V}$ be a collection of binary random variables indexed by the set of vertices of $T$. We assume that all the inner nodes represent hidden random variables. Hence the general Markov model, denoted by $\cM_{T}$, is a family of marginal distributions over the vector of random variables representing the leaves of $T$. 

A surprising fact proved in this paper is that, given the sample proportions lie within the model class, the zeros in the sample covariance matrix of the vector of observed random variables completely determine the asymptotics for the model $\cM_{T}$. In this paper following \cite{rusakov2006ams} we always assume:
\begin{description}
\item[(A1)]{The prior distribution $\varphi:\Theta\rightarrow \R$ is  strictly positive, bounded and smooth on $\Theta$.}
\item[(A2)]{There exists $N_{0}$ such that $\hat{p}^{(N)}=\hat{p}\in\cM$ for all $N>N_{0}$ and $\hat{p}$ has positive entries.}
\end{description}
For a given sample covariance matrix $\widehat{\Sigma}=[\hat{\mu}_{ij}]$ let $l_{2}$ denote the number of inner nodes $v$ of $T$ such that for each triple $i,j,k$ of leaves separated in $T$ by $v$ we have $\hat{\mu}_{ij}\hat{\mu}_{ik}\hat{\mu}_{jk}=0$ but there exist leaves $i,j$ separated by $v$ such that $\hat{\mu}_{ij}\neq 0$. Here we say that two nodes $u,v$ of $T$ are \textit{separated} by another node $w$ if $w$ lies on the unique path between $u$ and $v$. We define a \textit{degenerate node} as an inner nodes $v$ such that for any two leaves $i,j$ separated by $v$ we have $\hat{\mu}_{ij}=0$. All other nodes are called \textit{nondegenerate}. We denote by $n_{e}$ the number of edges of $T$ and by $n_{v}$ the number of its nodes. 
\begin{thm}\label{prop:regularcase} Let $T^{r}$ be a rooted tree with $n$ {leaves} representing binary random variables $X_{1},\ldots, X_{n}$ and assume that their joint distribution lies in the general Markov model $\cM_{T}$. Let $X^{(N)}$ be $N$ independent realizations of this vector and let $\hat{p}^{(N)}$ the corresponding sample proportions. With assumptions (A1) and (A2), if there are no degenerate nodes then as $N\rightarrow \infty$
$$
\log Z(N)=\hat{\ell}_{N}-\frac{n_{v}+n_{e}-2l_{2}}{2}\log N+O(1),
$$
where $\hat{\ell}_{N}$ is the maximum log-likelihood value. 
\end{thm}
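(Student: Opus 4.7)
The plan is to reduce the theorem to the regular case covered by Proposition~\ref{prop:regulargeneral}. The total parameter dimension of the general Markov model on $T^{r}$ is $d=1+2n_{e}=n_{v}+n_{e}$ (one root marginal plus two conditional probabilities per edge), so the claimed coefficient $\tfrac{n_{v}+n_{e}-2l_{2}}{2}$ equals $\tfrac{d-d'}{2}$ with $d'=2l_{2}$. Consequently the theorem will follow from Proposition~\ref{prop:regulargeneral} once we establish that $\widehat{\Theta}=\{\theta\in\Theta:p(\theta)=\hat{p}\}$ is a smooth, compact, semi-analytic submanifold of dimension $2l_{2}$.

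The geometry of $\widehat{\Theta}$ is analysed node by node using the Markov factorisation of pairwise covariances on a tree. For any inner node $v$ on the path between leaves $i$ and $j$ one has $\mu_{ij}=\mu_{iv}\mu_{jv}/\mathrm{Var}(Y_{v})$, so that for a triple $i,j,k$ separated by $v$ the identity $\mu_{ij}\mu_{ik}\mu_{jk}=\mu_{iv}^{2}\mu_{jv}^{2}\mu_{kv}^{2}/\mathrm{Var}(Y_{v})^{3}$ holds. Translating the definitions: at a regular inner node there is a separating triple with all three of $\mu_{iv},\mu_{jv},\mu_{kv}$ nonzero, whereas at an $l_{2}$ node exactly two of the subtrees rooted at $v$ contain leaves $i$ with $\mu_{iv}\neq 0$, while every other subtree $T'$ has $\mu_{iv}=0$ for all $i\in T'$; in particular $T'$ is observably independent of $Y_{v}$.

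The core computation is the local dimension count at an $l_{2}$ node $v$. After integrating out the ``zero-correlated'' subtrees, the observable distribution on the leaves of the two remaining subtrees is modelled as a naive Bayes with hidden $Y_{v}$ and two effective observed variables; the parameter space has dimension $5$ while the image lies in the $3$-dimensional simplex on two binary observables, so the local fiber is $2$-dimensional, matching the Rusakov--Geiger analysis. In each zero-correlated subtree $T'$, the leaf marginal together with the zero-correlation constraint on the edge entering $T'$ pins down all the internal parameters of $T'$, so these branches contribute nothing to the fiber. At a regular inner node, Kruskal-type tensor identifiability determines the incident parameters up to the finite hidden-label-swap symmetry. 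Summing the contributions gives $\dim\widehat{\Theta}=2l_{2}$, and an explicit gluing in tree-cumulant coordinates exhibits $\widehat{\Theta}$ as a finite disjoint union of smooth compact $2l_{2}$-dimensional manifolds, which is in particular a semi-analytic set of well-defined dimension. Proposition~\ref{prop:regulargeneral} then yields the stated expansion.

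The step I expect to be the main obstacle is the smoothness verification at $l_{2}$ nodes: one needs a concrete local reparametrisation turning the parameter-to-distribution map into a trivial projection, so that $\widehat{\Theta}$ has no residual singularity that would otherwise force invoking the real log-canonical threshold machinery of Theorem~\ref{th:watanabe}. This is precisely where the no-degenerate-node hypothesis is essential, since at a degenerate node a whole subtree decouples from the observations and the fiber picks up singular strata beyond the reach of Proposition~\ref{prop:regulargeneral}.
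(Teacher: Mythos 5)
Your proposal follows essentially the same route as the paper: reduce to Proposition~\ref{prop:regulargeneral} by showing that $\widehat{\Theta}$ is cut out by a smooth manifold of dimension $2l_{2}$ inside the $(n_{v}+n_{e})$-dimensional parameter space, and your dimension count $d=1+2n_{e}=n_{v}+n_{e}$ and the local $5-3=2$ fiber dimension at each degree-two node of $\widehat{T}$ agree with the paper's. The one step you flag as the main obstacle --- the smoothness of the fiber at the $l_{2}$ nodes and the global gluing --- is exactly the content of Proposition~\ref{prop:regular}, which the paper does not reprove but imports from Theorem~5.4 of \cite{pwz2010-identifiability} (where $\widehat{\Theta}_{T}$ is shown to be a manifold with corners of dimension $2l_{2}$); so your sketch is sound in outline but that verification is genuinely nontrivial and is resolved by citation rather than by the local reparametrisation you would still need to construct.
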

In general if there are degenerate nodes the computations of the BIC approximation are much harder because the likelihood in this case maximizes over a singular subset of the parameter space. In this paper we  obtain a closed form formula for the BIC approximation in the case of \textit{trivalent trees}, i.e. the trees such that each inner node has valency three. This is provided in Theorem \ref{th:main} which together with Theorem \ref{prop:regularcase} are the main results of this paper. 

Let $l_{3}$ denote the number of inner nodes $v$ such that for every $i,j\in [n]$ such that the path between $i$ and $j$ crosses $v$ we have that $\hat{\mu}_{ij}\neq 0$. 
\begin{thm}\label{th:main}
Let $T=(V,E)$ be a rooted trivalent tree with $n\geq 3$ leaves and root $r$. Let $X=(X_{1},\ldots, X_{n})$ be a binary random vector representing the leaves of $T$ and assume that the joint distribution of $X$ lies in $\cM_{T}$. Let $X^{(N)}$ be a random sample given by $N$ independent realization of $X$ and $\hat{p}^{(N)}$ the corresponding sample proportions. With assumptions (A1) and (A2) if $r$ is degenerate but all its neighbors are not, then  as $N\rightarrow\infty$
$$
\log Z(N)=\hat{\ell}_{N}-\frac{3n+l_{2}+5l_{3}-1}{4}\log N+O(1).
$$
In all other cases as $N\rightarrow\infty$
$$
\log Z(N)=\hat{\ell}_{N}-\frac{3n+l_{2}+5l_{3}}{4}\log N+c\log\log N+O(1),
$$
where $c\geq 0$. Moreover $c=0$ always if $r$ is nondegenerate or if $r$ and all its neighbors are degenerate.
\end{thm}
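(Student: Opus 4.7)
The plan is to reduce the problem to a computation of the real log-canonical threshold (RLCT) $\lambda$ and its multiplicity $m$ of the Kullback--Leibler divergence $K(\theta)$ between the sample proportion $\hat p$ and the model $\cM_T$ along its zero set $\widehat\Theta$. Under (A1)--(A2), Theorem \ref{th:watanabe} yields
$$\log Z(N) = \hat{\ell}_N - \lambda \log N + (m-1)\log\log N + O(1).$$
The theorem then reduces to proving $\lambda = \tfrac{3n+l_2+5l_3-1}{4}$ and $m=1$ in the first case, and $\lambda = \tfrac{3n+l_2+5l_3}{4}$ with $m \geq 1$ (and $m=1$ in the two exceptional configurations at $r$) in the second.

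\textbf{Edge parametrization and localization.} I would work in an edge-based parametrization of $\cM_T$ in which, for any two leaves $i,j$, the centered second moment $\hat\mu_{ij}$ becomes a monomial in the edge parameters $(\eta_e)_{e\in E}$ along the unique path $\pi(i,j)$ of $T$. In these coordinates $\widehat\Theta$ is cut out by a union of monomial ideals attached to the inner nodes, and its local structure at an inner node $v$ depends only on the degeneracy pattern of the three subtrees hanging off $v$. Nondegenerate nodes contribute as in the regular Theorem \ref{prop:regularcase}, so the task is to measure the extra contribution from each degenerate or $l_2$-type node, one node at a time.

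\textbf{Newton polyhedra at singular points.} Applying Varchenko's theorem, the local RLCT at a trivalent inner node whose three subtrees contribute monomials $\eta_A, \eta_B, \eta_C$ is read off the Newton polyhedron of an expression of the form $(\eta_A\eta_B)^2 + (\eta_A\eta_C)^2 + (\eta_B\eta_C)^2$ multiplied by the Jacobian of the coordinate change and the prior. The resulting per-node shifts are $\tfrac{5}{4}$ at an $l_3$-node (nondegenerate along all three incident subtrees) and $\tfrac{1}{4}$ at an $l_2$-node; summed with the base count $\tfrac{3n}{4}$ coming from the fully smooth ambient directions of a trivalent tree on $n$ leaves, this gives the announced coefficient $\tfrac{3n+l_2+5l_3}{4}$ in the generic case.

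\textbf{Root behavior and the main obstacle.} The exceptional numerator $-1$ and the $\log\log N$ term both originate at the root, which is the only node of the trivalent tree that may play an asymmetric role. When $r$ is degenerate but its neighbors are nondegenerate, a single blow-up in the three edge parameters incident to $r$ fully resolves the local singularity, giving the integer shift $-\tfrac14$ and trivial monodromy ($m=1$). In the remaining configurations the facets of the Newton polyhedron local to $r$ that realize the distance $\lambda$ can coincide, producing $m=2$ and the $c\log\log N$ correction; when $r$ is nondegenerate, or when $r$ and all its neighbors are simultaneously degenerate, the symmetry of the local monomial data forces the extremal face to be unique and hence $c=0$. \emph{The main obstacle} is precisely this coincident-facet analysis: the per-node toric computations are largely routine, but controlling the global interaction of the root with its three neighbors requires a careful combinatorial case analysis of the full Newton polyhedron, and that is where the bulk of the technical effort will sit.
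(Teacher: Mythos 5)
Your high-level strategy (Watanabe's theorem, reduction to a real log-canonical threshold, passing to a parametrization in which the covariances $\hat\mu_{ij}$ become path monomials in edge parameters, Newton-polyhedron analysis at the singular points) is the same as the paper's. But there is a genuine gap in the middle step: the claim that the RLCT can be computed \emph{one node at a time}, with a local shift of $\tfrac54$ at each $l_3$-node and $\tfrac14$ at each $l_2$-node read off from the Newton polyhedron of $(\eta_A\eta_B)^2+(\eta_A\eta_C)^2+(\eta_B\eta_C)^2$. RLCTs of sums of ideals are only additive when the ideals involve disjoint sets of variables (Proposition \ref{lem:product-rlct}); the generators $\kappa_{ij}$ are monomials in \emph{all} the edge parameters along the path from $i$ to $j$, so the ideals attached to different inner nodes of a connected degenerate region share variables and their thresholds do not add. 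Concretely, a connected degenerate subtree with $\ell$ leaves contributes $\ell/4$ to the threshold (Proposition \ref{prop:zerocase}), not $\tfrac34$ per degenerate inner node: a degenerate quartet component has two inner nodes but contributes $1$, not $\tfrac32$. The per-node arithmetic $\tfrac{3n+l_2+5l_3}{4}$ is an identity satisfied by the final answer, not a decomposition the proof can follow. What is actually needed — and what the paper spends most of its effort on — is a decomposition of the tree into maximal nondegenerate components $S_i$ (handled by the smooth case) and maximal degenerate components $T_i$, whose ideals, after the reductions of Lemma \ref{lem:redzQ}, do live on disjoint variables; and then a \emph{global} computation of the RLCT of each degenerate component via the Newton polyhedron of the full monomial ideal $\sum_{i,j}\langle\kappa_{ij}\rangle$. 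That computation requires constructing an explicit network of paths whose barycenter in the pair-edge incidence polytope is dominated by $\tfrac{4}{\ell}\mathbf 1$, and a facet description of that polytope (Lemmas \ref{lem:lambda} and \ref{lem:structureP}); none of this is "routine per-node toric computation."

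Two further points. First, your account of the multiplicity is too optimistic: the assertion that a single blow-up at the root resolves the singularity when $r$ is degenerate with nondegenerate neighbors is not what happens — in that case the degenerate component containing $r$ is exactly a tripod rooted at its center, and the value $\tfrac{\ell-1}{4}=\tfrac12$ (hence the $-\tfrac14$ in the numerator) comes from the separate $n=3$ analysis (the Rusakov--Geiger type-2 singularity), not from a local resolution argument. In the remaining root configurations the paper does \emph{not} prove $m=2$; it only proves $m\ge1$, and proves $m=1$ in the two symmetric subcases by showing that a suitable barycenter point lies in the relative interior of $\Gamma(Q_\delta)$ and that $F_0$ is the unique facet of $\Gamma_+(Q_\delta)$ through $\tfrac4n\mathbf1$. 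Second, your proposal ignores the boundary issue: the minimizing points of the RLCT lie on $\bd(\Omega_T)$ (where $s_v=1$), where the local threshold need not equal the constrained one, so one must sandwich ${\rm RLCT}_{\Omega_T}$ between values at interior points of $\widehat\Omega_0$ and at the deepest singularity (Proposition \ref{lem:red-2-deepest} together with (\ref{eq:rlct-constr})). Without that step the passage from local to global thresholds is not justified.
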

Following \cite{rusakov2006ams} the main method of  proof is to change the coordinates of the models so that the induced parameterization becomes simple. This gives us a much better insight into the model structure (see \cite{pwz2010-identifiability}, \cite{pwzsmith2009}). Since the  BIC approximation is invariant with respect to these changes the reparameterized problem still gives the solution to the original question.   The main analytical tool is the real log-canonical threshold (e.g. \cite{saito2007real}, \cite{watanabe_book}). This is an important geometric invariant which in certain cases can be computed in a relatively simple way using discrete geometry. The relevance of this invariant to the BIC approximation is given by Theorem \ref{th:watanabe}. Techniques developed in this paper can be applied to obtain the BIC approximation also in the non-trivalent case.


The paper is organized as follows. In Section \ref{sec:approx} we provide the theory of asymptotic approximation of the marginal likelihood integrals. This theory allows us to approximate marginal likelihood without the standard regularity assumptions. Theorem \ref{th:watanabe} links these concepts with the real log-canonical threshold which allows us to use simple algebraic arguments. In Section \ref{sec:tree-models} we define Bayesian networks on rooted trees. We also obtain some simple result on the BIC approximation in the case when the observed likelihood is maximised over a sufficiently smooth subset of the parameter space. This gives a simple proof of Theorem \ref{prop:regularcase}. The proof of Theorem \ref{th:main} is more technical and so divided it into three main steps. By Theorem \ref{th:watanabe} to obtain the asymptotic approximation we need to compute a certain real log-canonical threshold. In the first step, in Section \ref{sec:sing-case}, following \cite{shaowei_rlct} we introduce the concept of the real log-canonical threshold of an ideal. Theorem \ref{th:barI} reduces our computations to the real log-canonical threshold of an ideal induced by the parametrization of the given model. This result applies for general discrete statistical models. Theorem \ref{th:redtokappas} gives an additional reduction which can be obtained only for tree models. Second step is given in Section \ref{sec:splittrees} where we show that the computations can be reduced to two distinct cases. One of them is the case already considered  in Section \ref{sec:tree-models}. The second case is more complicated and requires to use the method of Newton diagrams. We analyze this case in Section \ref{sec:zerocase}. Finally in  Section \ref{sec:last-proof} we combine all the results.

\section{Asymptotics of marginal likelihood integrals}\label{sec:approx}

In this section we introduce the real log-canonical threshold and link it with the problem of asymptotic approximation of Laplace integrals. We present how this enables us to obtain the BIC approximation in the case of a general class of statistical models.

\subsection{The real log-canonical threshold}\label{sec:rlct}
Given $\theta_{0}\in \R^{d}$, let $\cA_{\theta_{0}}(\R^{d})$ be the ring of real-valued functions $f:\R^{d}\rightarrow \R$ that are analytic at $\theta_{0}$. Given a subset $\Theta \subset\R^{d}$, let $\cA_{\Theta}(\R^{d})$ be the ring of real functions analytic at each point $\theta_{0}\in\Theta$. If $f\in\cA_{\Theta}(\R^{d})$, then for every $\theta_{0}\in \Theta$, $f$ can be locally represented as a power series centered at $\theta_{0}$. Denote by $\cA^{\geq}_{\Theta}(\R^{d})$ the subset of $\cA_{\Theta}(\R^{d})$ consisting of all non-negative functions.  Usually the ambient space is clear from the context and in this case we omit it in our notation writing $\cA_{\theta_{0}}$ and so on. We assume that $\Theta\subseteq\R^{d}$ is a compact and \textit{semianalytic set} of dimension $d$, i.e. $\Theta=\{x\in\R^{d}:g_{1}(x)\geq 0,\ldots, g_{l}(x)\geq 0\}$, where $g_{i}$ are analytic functions. 
\begin{defn}[The real log-canonical threshold]\label{def:rlct}
Given a compact semianalytic set $\Theta\subseteq\R^{d}$ such that $\dim \Theta=d$, a real analytic function $f\in\cA^{\geq}_{\Theta}(\R^{d})$ and a smooth positive function $\varphi:\R^{d}\rightarrow \R$, consider the \textit{zeta function} defined as
\begin{equation}\label{eq:zeta}
\zeta(z)=\int_{\Theta} f(\theta)^{-z} \varphi(\theta){\rm d} \theta.\end{equation}
This function  is extended to a meromorphic function in $z$ on the entire complex line (c.f. Theorem 2.4 in  \cite{watanabe_book}). The \textit{real log-canonical threshold of $f$} denoted by ${\rm rlct}_{\Theta}(f;\varphi)$ is the smallest pole of $\zeta(z)$. By ${\rm mult}_{\Theta}(f;\varphi)$ we denote the multiplicity of this pole. By convention if $\zeta(z)$ has no poles then ${\rm rlct}_{\Theta}(f;\varphi)=\infty$ and ${\rm mult}_{\Theta}(f;\varphi)=d$. If $\varphi(\theta)\equiv 1$ then we omit $\varphi$ in the notation writing ${\rm rclt}_{\Theta}(f)$ and ${\rm mult}_{\Theta}(f)$. Define ${\rm RLCT}_{\Theta}(f;\varphi)$ to be the pair $({\rm rlct}_{\Theta}(f;\varphi), {\rm mult}_{\Theta}(f;\varphi))$, and we order these pairs so that $(r_{1},m_{1})>(r_{2},m_{2})$ if $r_{1}>r_{2}$, or $r_{1}=r_{2}$ and $m_{1}<m_{2}$. 
\end{defn}
To show that the real log-canonical threshold is well defined we need to show that if $\zeta(z)$ has poles then the minimal pole always exists. This is easy to see if $f$ and $\varphi$ are monomial functions as in the example below.

\begin{exmp}\label{ex:monomial}
Let $f:\R^{d}\rightarrow \R$ such that $f(x)=x^{2u}=x_{1}^{2u_{1}}\cdots x_{d}^{2u_{d}}$ and $\varphi(x)=x^{h}=x_{1}^{h_{1}}\cdots x_{d}^{h_{d}}$ where $u,h\in \N^{d}$. If $\Theta=[-\e,\e]^{d}$ is an $\e$-box around the origin in $\R^{d}$ then the zeta function in (\ref{eq:zeta}) becomes
$$
\zeta(z)=\int_{\Theta} \prod_{i=1}^{d}x_{i}^{h_{i}-2u_{i}z}{\rm d}x=C(\e)\prod_{i=1}^{d} \frac{1}{1+h_{i}-2u_{i}z},
$$ 
where $C(\e)$ is a constant depending on $\e$. Hence the poles of $\zeta(z)$ are positive rational functions given by $\frac{1+h_{i}}{2u_{i}}$ for $i=1,\ldots, d$. In this case the smallest pole is given by the minimal of these numbers and the multiplicity is given by the number of times the minimum occurred.  
\end{exmp}
The computation of  poles and their multiplicities of $\zeta(z)$ is linked to the asymptotic expansion of the Laplace integral 
\begin{equation}\label{eq:laplace2}
I(N)=\int_{\Theta}e^{-Nf(\theta)}\varphi(\theta){\rm d}\theta,
\end{equation}
for large values of the parameter $N$. This theory was independently developed in Section 7.2 in \cite{arnold1985singularities} and Section 2.4 and Section 6.2 in \cite{watanabe_book}. The following theorem gives this relation. In Section \ref{sec:marg-like} we show how it can be used to obtain the BIC approximation under a fairly general statistical setting which will be later specialized to general Markov models for binary data.
\begin{thm}
\label{th:watanabe}Let $\Theta$ be a compact semianalytic subset of $\R^{d}$ and $f\in\cA_{\Theta}^{\geq}(\R^{d})$. Let $I(N)$ be defined as in (\ref{eq:laplace2}). Then as $N\rightarrow \infty$
$$
\log I(N)=-{\rm rlct}_{\Theta}(f;\varphi) \log N + ({\rm mult}_{\Theta}(f;\varphi)-1)\log \log N + O(1).
$$
\end{thm}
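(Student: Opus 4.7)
The strategy is to connect $I(N)$ to $\zeta(z)$ via the Mellin--Barnes representation of the exponential, and then extract the large-$N$ asymptotics by shifting the contour past the smallest pole of $\zeta$. Meromorphicity of $\zeta$ and the existence of that pole rest on resolution of singularities in its real-analytic form, which locally puts $f$ in the monomial form of Example \ref{ex:monomial}. This is exactly the route followed in \cite{arnold1985singularities} and in Chapters~2 and~6 of \cite{watanabe_book}.

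First I would set up the Mellin representation. Starting from
$$e^{-y}=\frac{1}{2\pi i}\int_{c-i\infty}^{c+i\infty}\Gamma(z)\,y^{-z}\,{\rm d}z\qquad (y>0,\ c>0),$$
substituting $y=Nf(\t)$ and interchanging the two integrals by Fubini gives
$$I(N)=\frac{1}{2\pi i}\int_{c-i\infty}^{c+i\infty}\Gamma(z)\,N^{-z}\,\zeta(z)\,{\rm d}z,$$
on any vertical line $\Re z=c$ on which both $\Gamma$ and $\zeta$ are holomorphic, that is, for $0<c<{\rm rlct}_{\Theta}(f;\varphi)$. Absolute convergence of the double integral on this strip is immediate from the compactness of $\Theta$, boundedness of $\varphi$, and the choice of $c$.

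Second, I would describe the pole structure of $\zeta(z)$. Resolution of singularities applied to $f$ produces a proper real-analytic map $\rho:M\to\R^{d}$ such that, in suitable local coordinates on each chart of a finite atlas of $M$, both the pullback $f\circ\rho$ and the Jacobian $|\det J_{\rho}|$ factor as monomials times strictly positive analytic units. Pulling $\zeta(z)$ back along $\rho$ and applying a smooth partition of unity subordinate to this atlas writes $\zeta$ as a finite sum of local integrals of exactly the monomial shape computed in Example \ref{ex:monomial}. Each summand is meromorphic on $\C$ with poles contained in a finite union of arithmetic progressions of positive rationals, so $\zeta$ itself is meromorphic on $\C$ and both the smallest pole $\l={\rm rlct}_{\Theta}(f;\varphi)$ and its multiplicity $m={\rm mult}_{\Theta}(f;\varphi)$ are well-defined and attained.

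Finally, I would extract the asymptotic by shifting to a line $\Re z=c''$ with $\l<c''$ less than every other pole, picking up the residue at $z=\l$:
$$I(N)=-{\rm Res}_{z=\l}\bigl[\Gamma(z)\,N^{-z}\,\zeta(z)\bigr]+\frac{1}{2\pi i}\int_{c''-i\infty}^{c''+i\infty}\Gamma(z)\,N^{-z}\,\zeta(z)\,{\rm d}z.$$
Writing $\zeta(z)=a_{-m}(z-\l)^{-m}+\cdots$ near $\l$ and expanding $N^{-z}=N^{-\l}e^{-(z-\l)\log N}$, the residue evaluates to
$$\frac{\Gamma(\l)\,a_{-m}}{(m-1)!}\,(-\log N)^{m-1}\,N^{-\l}\ +\ \text{(strictly lower order in $N$)}.$$
The remaining vertical-line integral is $O(N^{-c''})$, hence negligible, using Stirling's bound on $|\Gamma|$ together with a polynomial bound on $|\zeta|$ in vertical strips supplied by the resolution. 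Taking logarithms produces the stated expansion. The main obstacle is verifying that $\Gamma(\l)\,a_{-m}\neq 0$: one must rule out cancellation between the chart contributions to $a_{-m}$. Here positivity of $f$ and strict positivity of $\varphi$ are essential, since they force each chart's contribution to $a_{-m}$ to have the same sign $(-1)^{m}$ already visible in Example \ref{ex:monomial}, so the contributions to the leading Laurent coefficient cannot cancel.
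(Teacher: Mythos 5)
Your proposal is correct in outline, but it takes a genuinely different route from the paper: the paper's entire proof of Theorem \ref{th:watanabe} is a one-line reduction to Theorem 4.2 of \cite{shaowei_rlct} (with $r=1$ and $f_{1}=\sqrt{f}$), whereas you reconstruct the Mellin--Barnes argument that underlies that cited result and goes back to \cite{arnold1985singularities} and \cite{watanabe_book}. Your three steps --- the representation $I(N)=\frac{1}{2\pi i}\int_{\Re z=c}\Gamma(z)N^{-z}\zeta(z)\,{\rm d}z$ for $0<c<{\rm rlct}_{\Theta}(f;\varphi)$, meromorphic continuation of $\zeta$ by monomialization plus a partition of unity, and a contour shift past the first pole --- are exactly the standard proof, and your observation that each chart contributes to the leading Laurent coefficient $a_{-m}$ with the fixed sign $(-1)^{m}$ (so positivity of the integrand rules out cancellation and the $N^{-\lambda}(\log N)^{m-1}$ term survives) is the right way to close the one genuinely delicate point. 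What your sketch buys is transparency about where each hypothesis enters; what the citation buys is that two remaining technicalities are actually discharged: (i) the resolution must be applied jointly to $f$ and to the inequalities $g_{i}\geq 0$ defining the semianalytic set $\Theta$, so that on each chart the domain of integration is itself an orthant in the new coordinates --- otherwise the local integrals near boundary points of $\Theta$ need not reduce to the clean form of Example \ref{ex:monomial}; and (ii) the polynomial bound on $|\zeta|$ in vertical strips and the vanishing of the horizontal segments in the contour shift, which you correctly flag but do not prove. Neither point is a gap in the idea, only in the level of detail.
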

\begin{proof}
This is a special case of Theorem 4.2 in \cite{shaowei_rlct} such that $r=1$ and $f_{1}=\sqrt{f}$.
\end{proof}

To compute the real log-canonical threshold we split integral in (\ref{eq:zeta}) into a sum of finitely many integrals over small neighbourhoods $\Theta_{0}$ of some points $\theta_{0}\in \Theta$. We can always do this using a partition of unity since $\Theta$ is compact (see e.g. \textsection 16, \cite{munkres_manif}). For each of the local integrals we use Hironaka's theorem stated below to reduce it to a locally monomial case which can be easily dealt with as  in Example \ref{ex:monomial}. The version of Hironaka's theorem  we are going to use in this paper was first formulated in \cite{atiyah1970resolution}.
\begin{thm}[Hironaka's theorem]\label{th:hironaka}
Let $f:\R^{d}\rightarrow \R$ be a real analytic function in the neigborhood of the origin  such that $f(0)=0$. Then there exists a neighborhood of the origin $W$ and a proper real analytic map $\pi: U\rightarrow W$ where $U$ is a $d$-dimensional real analytic manifold such that
\begin{enumerate}
\item The map $\pi$ is an isomorphism between $U\setminus U_{0}$ and $W\setminus W_{0}$, where $W_{0}=\{x\in W: f(x)=0\}$ and $U_{0}=\{u\in U: f(\pi(u))=0\}$.
\item For an arbitrary point $P\in U_{0}$, there is a local coordinate system $u=(u_{1},\ldots, u_{d})$ of $U$ in which $P$ is the origin and 
$$
f(\pi(u))=a(u) u_{1}^{r_{1}}\cdots u_{d}^{r_{d}},
$$
where $a(u)$ is a nowhere vanishing function on this local chart and $r_{1},\ldots, r_{d}$ are nonnegative integers, and the Jacobian determinant of $x=\pi(u)$ is 
$$
\pi'(u)=b(u)u_{1}^{h_{1}}\cdots u_{d}^{h_{d}},
$$
where again $b(u)\neq 0$ and $h_{1},\ldots, h_{d}$ are nonnegative integers. 
\end{enumerate}
Moreover $\pi$ can be always obtained as a composition of blow-ups along smooth centers. 
\end{thm}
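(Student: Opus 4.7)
The plan is to prove resolution of singularities by an inductive procedure that at each step strictly decreases a numerical invariant of the singularity via a blow-up along a carefully chosen smooth center, eventually arriving at a situation where $f\circ \pi$ is locally a monomial times a unit. Since the statement is local at the origin and we can work in a small enough neighborhood $W$, I would first invoke Weierstrass preparation to reduce the analytic problem to a polynomial one in a distinguished variable, modulo units. The cost is mild: all conclusions (monomial form of $f\circ\pi$ and of the Jacobian $\pi'$) are invariant under multiplication by nowhere-vanishing analytic functions, so Weierstrass preparation does not interfere with the desired output.

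Next I would define the key desingularization invariant. The simplest workable choice is the order $\nu_P(f):=\max\{k:f\in\mm_P^k\}$ of vanishing at a point $P$, refined to a lexicographic tuple (for instance the Hilbert--Samuel function, or the refined Bierstone--Milman invariant ${\rm inv}_f(P)$) so that it is upper semicontinuous and its maximum locus is a closed analytic subset. The centers of the successive blow-ups will be chosen to be smooth components of this maximum locus. The first main step is to show: if $Z\subseteq W$ is a smooth subvariety contained in the equimultiple locus of $f$ and $\pi_1:W_1\rightarrow W$ is the blow-up along $Z$, then at every point $P_1\in \pi_1^{-1}(Z)$ the invariant does not increase, and it strictly decreases somewhere. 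Iterating and using Noetherianity (finite descending chains) yields termination.

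The second main step is gluing. Local charts on $U$ come from successive blow-ups in coordinate neighborhoods; because the invariant is defined intrinsically on $W$ and is upper semicontinuous, its maximum locus is a well-defined global analytic subspace, so the center of each blow-up is globally defined and the procedure patches. At the end of the induction, in each local chart around $P\in U_0$ the total transform $f\circ\pi$ has been reduced to normal crossings; I would then read off exponents $r_1,\ldots,r_d$ from the exceptional divisors and absorb all nonvanishing factors into $a(u)$. The Jacobian formula for $\pi'$ is then automatic, since each blow-up along a smooth center of codimension $c$ contributes a factor $u_i^{c-1}$ to the Jacobian in the standard affine chart, and these accumulate to the claimed monomial form $b(u)u_1^{h_1}\cdots u_d^{h_d}$. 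Properness of $\pi$ follows because each blow-up is proper and properness is preserved by composition.

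The hard part is of course choosing a center that is both smooth and for which the invariant strictly drops after blowing up; a naive choice of the full maximum-locus can fail to be smooth, and even when it is smooth the invariant may stay constant unless one tracks exceptional divisors separately. The modern resolution via the Bierstone--Milman or W{\l}odarczyk invariant handles this by recording, in addition to the order, a history of accumulated exceptional hypersurfaces and working in ``year-graded'' maximal contact subvarieties; verifying monotonicity of this enriched invariant under blow-up is the genuinely delicate technical core, and it is exactly the step that requires characteristic zero (so that maximal contact hypersurfaces exist via differentiation). For the purposes of the present paper I would simply cite \cite{atiyah1970resolution}, where Hironaka's theorem is recast in the real-analytic setting in precisely the form stated, since the statement is used here as a black box to reduce $f$ locally to the monomial form exploited in Example~\ref{ex:monomial}.
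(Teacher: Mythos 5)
The paper offers no proof of this theorem at all: it is stated as a classical result, attributed to Hironaka in the real-analytic formulation of \cite{atiyah1970resolution}, and used as a black box (with a pointer to \cite{watanabe_book} for the blow-up construction). Your proposal ends by doing exactly the same thing — citing \cite{atiyah1970resolution} — so in the only sense that matters for this paper you take the same approach; the preceding outline is a fair roadmap of the Hironaka/Bierstone--Milman strategy (upper semicontinuous invariant, smooth centers in the maximal locus, strict descent, maximal contact requiring characteristic zero, Jacobian contribution $u_i^{c-1}$ from a codimension-$c$ center), but as you yourself acknowledge it leaves the genuinely hard monotonicity-of-the-invariant step unproved, so it should be read as context for the citation rather than as a self-contained argument.
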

For the construction of the blow-up see for example Section 3.5 in \cite{watanabe_book}. 

The local computations are performed as follows. Let $\theta_{0}\in\Theta$ and let $W_{0}$ be any sufficiently small open ball around $\theta_{0}$ in $\R^{d}$. Then, by Theorem 2.4 in \cite{watanabe_book}, ${\rm RLCT}_{ W_{0}}(f;\varphi)$ does not depend on the choice of $W_{0}$ and hence it is denoted by ${\rm RLCT}_{\theta_{0}}(f;\varphi)$. Formally for this local computation we consider $f$ centered at $\theta_{0}$, i.e. the function $f(\theta+\theta_{0})$. If $f(\theta_{0})\neq 0$ then ${\rm RLCT}_{W_{0}}(f;\varphi)=(\infty, d)$ and hence we can constrain only to points $\theta_{0}$ such that $f(\theta_{0})=0$. In this case by Hironaka's theorem  
$$
\int_{W_{0}}(f(\theta))^{-z}\varphi(\theta){\rm d}\theta=\int_{W} (f(\theta+\theta_{0}))^{-z}\varphi(\theta+\theta_{0}) {\rm d}\theta=\sum_{\beta} \int_{U_{\beta}} u^{h-2rz} c_{\beta}(u) {\rm d}u,
$$
where $W$ is the neighbourhood $W_{0}$ translated to the origin and the (finite) sum is over all local charts as in the theorem such that they cover $\pi^{-1}(W)$ and $c_{\beta}$ are nowhere vanishing functions on $U_{\beta}$. Then for each of the charts we do computations as in Example \ref{ex:monomial}. Consequently ${\rm rlct}_{\theta_{0}}(f;\varphi)=\min_{\beta}\min_{i} \frac{1+h_{i}}{2r_{i}}$ and $${\rm mult}_{\theta_{0}}(f;\varphi)=\max_{\beta}\#\{i\in\{1,\ldots,d\}:\, \frac{1+h_{i}}{2r_{i}}={\rm rlct}_{\theta_{0}}(f;\varphi)\}.$$ 
In particular ${\rm rltc}_{\Theta}(f;\varphi)$ is always a positive rational number and ${\rm mult}_{\Theta}(f;\varphi)$ is a nonnegative integer which shows that Definition \ref{def:rlct} makes sense. Moreover by Theorem 2.4 in \cite{watanabe_book} the real log-canonical threshold does not depend on the triple $(W,U,\pi)$.

The local computations give the answer to the global question since by \cite[Proposition 2.5]{shaowei_rlct} the set of pairs ${\rm RLCT}_{\Theta_{0}}(f;\varphi)$ for $\theta_{0}\in \Theta$ has a minimum and
\begin{equation}\label{eq:rlct-min}
{\rm RLCT}_{\Theta}(f;\varphi)=\min_{\theta_{0}\in\Theta}{\rm RLCT}_{\Theta_{0}}(f;\varphi),
\end{equation}
where $\Theta_{0}=W_{0}\cap \Theta$. For each $\theta_{0}\in \Theta$ to compute ${\rm RLCT}_{\Theta_{0}}(f;\varphi)$ we consider two cases. If $\theta_{0}$ lies in the interior of $\Theta$ then we can assume $\Theta_{0}=W_{0}$ and hence ${\rm RLCT}_{\Theta_{0}}(f;\varphi)={\rm RLCT}_{\theta_{0}}(f;\varphi)$. If $\theta_{0}\in{\rm bd}(\Theta)$, where ${\rm bd}(\Theta)$ denotes the boundary of $\Theta$, the computations may change significantly because the real log-canonical threshold depends on the boundary conditions (c.f. Example 2.7 in \cite{shaowei_rlct}). Nevertheless it can be showed that at least if there exists an open subset $U\subseteq \R^{d}$ such that $U\supset \Theta$ and $f\in\cA_{U}^{\geq}(\R^{d})$ then 
\begin{equation}\label{eq:rlct-constr}
{\rm RLCT}_{\Theta_{0}}(f)\geq {\rm RLCT}_{\theta_{0}}(f).
\end{equation}
For in this case 
$$
\int_{W_{0}}(f(\theta))^{-z}{\rm d}\theta=\int_{\Theta_{0}}(f(\theta))^{-z}{\rm d}\theta+\int_{W_{0}\setminus \Theta_{0}}(f(\theta))^{-z}{\rm d}\theta
$$
which implies that 
 $${\rm RLCT}_{\theta_{0}}(f)=\min\{{\rm RLCT}_{\Theta_{0}}(f),{\rm RLCT}_{W_{0}\setminus \Theta_{0}}(f)\}.$$

If $f\in\cA_{\Theta}^{\geq}(\R^{d})$, then let $\widehat{\Theta}:=f^{-1}(0)$. By definition \ref{def:rlct}, ${\rm RLCT}_{\theta_{0}}(f)=(\infty,d)$ for all $\theta_{0}\notin\widehat{\Theta}$ and hence we can restrict ourselves to points in $\widehat{\Theta}$. Therefore, whenever $\widehat{\Theta}\neq \emptyset$ we have\begin{equation}\label{eq:takemin}
{\rm RLCT}_{\Theta }(f)=\min_{\theta_{0}\in\widehat{\Theta} }{\rm RLCT}_{{\Theta}_{0}}(f).
\end{equation}

\begin{rem}
Note that there is a substantial difference between the real log-canonical threshold and the \textit{log-canonical threshold} which is an important invariant used in algebraic geometry (see e.g. \cite[Section 9.3.B]{lazarsfeld2004positivity}). Let $f\in\R[x_{1},\ldots, x_{d}]$ be a polynomial with real coefficients. By $f_{\C}$ we denote its \textit{complexification}, i.e. the same polynomial but as an element of $\C[x_{1},\ldots, x_{d}]$. Saito \cite{saito2007real} showed that ${\rm rlct}(f)\geq {\rm lct}(f_{\C})$. As an example let $f(x,y,z)=x^{2}+y^{2}+z^{2}$. By Koll\'{a}r \cite[Example 8.15]{kollar-sing-pairs}  we have ${\rm lct}_{0}(f_{\C})=1$ and we can easily show that over the real numbers a single blow-up at the origin (see e.g. \cite[Section 3.5]{watanabe_book}) allows us to compute the poles of $\zeta(z)$  (c.f. Proposition 3.3 in \cite{saito2007real}) giving ${\rm rlct}_{0}(f)=3/2$.
\end{rem}

\subsection{The marginal likelihood}\label{sec:marg-like}
Let $X$ be a discrete random variable with values in $[m]$ for some $m\geq 1$. A distribution of $X$ is given by $(p(X=1), \ldots, p(X=m))$. Denoting $p(X=i)$ by $p_{i}$ we associate each probability distribution for $X$ with a point $p=(p_{1},\ldots, p_{m})$ in the \textit{probability simplex}
$$
\Delta_{m-1}=\{x\in\R^{m}: x_{i}\geq 0, \sum_{i=1}^{m} x_{i}=1\}.
$$
By definition a model for $X$ is a family of points in $\Delta_{m-1}$. The model analysed in this paper is a special case of a parametric algebraic statistical models defined as an image in $\Delta_{m-1}$ of a polynomial mapping $p:\,\Theta\rightarrow \Delta_{m-1}$, where $\Theta\subseteq \R^{d}$ is called the parameter space (see e.g. Chapter 1, \cite{sturmfels}). We define $\cM=p(\Theta)$. Note that for a given integer $N$ every point $q\in\Delta_{m-1}$ gives a multinomial distribution ${\rm Mult}(N,q)$. Hence given a fixed $N$ we can naturally associate $\Delta_{m-1}$ with the multinomial model and hence $\cM$ can be treated as a submodel of the multinomial model. 

Let $X^{(N)}=(X^{1},\ldots, X^{N})$ denote $N$ independent observations of $X$ and let $(N_{i})$ for $i\in[m]$ be the sufficient statistic given by the sample counts. Let $\hat{p}^{(N)}=[\hat{p}_{i}^{(N)}]$ denote the sample proportions   $\hat{p}_{i}^{(N)}=N_{i}/N$. Given that the observations in $X^{(N)}$ are independent we can write the logarithm of the marginal likelihood $Z(N)$ as a function of $\hat{p}^{(N)}$. Let $\ell(p(\theta);X)=\log L(\theta;X)$ be the log-likelihood for a single observation. Then the observed log-likelihood of the data can be rewritten as 
\begin{equation}\label{eq:likelihood}
\ell_N(p(\theta))=\sum_{\a\in\{0,1\}^{n}} N_{\a} \log p_{\a}(\theta)= N \ell(p(\theta);\hat{p}^{(N)}).
\end{equation}
If the sample proportions $\hat{p}^{(N)}$ lie in the interior of the probability simplex then the likelihood function for the multinomial model as a function of the probabilities is always maximized over $\hat{p}^{(N)}$. Hence if $\hat{p}^{(N)}\in \cM$ the likelihood function constrained to $\cM$ is also maximized at $\hat{p}$. It follows that with assumption (A2) the maximum likelihood estimates for sufficiently large $N$ are given as all the points in the parameter space mapping to $\hat{p}$ which we denote by $\widehat{\Theta} =p^{-1}(\hat{p})\subset\Theta $.  


For given $\hat{p}$ define the normalized log-likelihood as a function $f:\Theta \rightarrow \R$ 
\begin{equation}\label{eq:def-f}
f(\theta)=f(p(\theta);\hat{p})=\ell(\hat{p};\hat{p})-\ell(p(\theta);\hat{p})\geq 0.
\end{equation}
Then $Z(N)$ in (\ref{eq:ZN0}) can be rewritten as $\exp({\hat{\ell}_{N}})\cdot I(N)$, where 
\begin{equation}\label{eq:ZN}
I(N)={\int_{\Theta } \exp\left\{ -N f(\theta))\right\}\varphi(\theta){\rm d}\theta}.
\end{equation}
The logarithm of the marginal likelihood can be written as $\log Z(N)=\hat{\ell}_{N}+\log I(N)$, where $\hat{\ell}_{N}=\ell_{N}(\hat{p})$. By construction $f\in\cA^{\geq}_{\Theta }$ and $f^{-1}(0)=p^{-1}(\hat{p})=\widehat{\Theta}$. By Theorem \ref{th:watanabe} to obtain the asymptotic approximation for $\log I(N)$, and hence also for $\log Z(N)$, we need to compute ${\rm RLCT}_{\Theta }(f;\varphi)$. 
\begin{rem}
Note that we are interested in the approximation of the observed marginal likelihood not in the expected one. Therefore, we cannot immediately apply the main result of Watanabe as stated in Theorem 1.1 in \cite{shaowei_rlct} (for a discussion see \cite{watanabe2010asymptotic}).
\end{rem}

In our analysis of general Markov models we distinguish two cases: the \textit{smooth case} when there exists a smooth manifold $M$ such that $\widehat{\Theta}=M\cap \Theta$ and the \textit{singular case}. The smooth case is simple to deal with. We can use the real log-canonical threshold to show that the BIC approximation in (\ref{eq:bicfinite}) generalizes to the case when $\widehat{\Theta}$ is a sufficiently regular subset of $\Theta$ given in (\ref{eq:bicsmotth}). We make this precise in the following proposition.
\begin{prop}\label{prop:regulargeneral}
Let $f\in\cA_{\Theta}^{\geq}(\R^{d})$ be the normalized log-likelihood in (\ref{eq:def-f}). Given (A1) and (A2) assume that  $\hat{p}=\hat{p}^{(N)}\in\cM$ is such that there exists a smooth manifold $M\subseteq \R^{d}$ satisfying $\widehat{\Theta}=M\cap \Theta$. Then as $N\rightarrow \infty$
$$
\log Z(N)=\hat{\ell}_{N}-\frac{d-d'}{2}\log N + O(1),
$$
where $d'=\dim \widehat{\Theta}$.
\end{prop}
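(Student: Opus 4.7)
The plan is to apply Theorem~\ref{th:watanabe} to the Laplace integral $I(N)$ in (\ref{eq:ZN}). Since $\log Z(N)=\hat\ell_N+\log I(N)$, it suffices to prove ${\rm RLCT}_\Theta(f;\varphi)=((d-d')/2,\,1)$, which yields $\log I(N)=-\tfrac{d-d'}{2}\log N+O(1)$. By (\ref{eq:takemin}) and (\ref{eq:rlct-min}), the global threshold equals the minimum of the local thresholds ${\rm RLCT}_{\Theta_0}(f;\varphi)$ over $\theta_0\in\widehat\Theta=p^{-1}(\hat{p})$, so I can work pointwise.

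First I would treat a point $\theta_0$ lying in the interior of $\Theta$. Because $p(\theta_0)=\hat{p}$ has strictly positive entries by (A2) and $\sum_i p_i(\theta)\equiv 1$, differentiating $f(\theta)=\ell(\hat{p};\hat{p})-\sum_i\hat{p}_i\log p_i(\theta)$ twice at $\theta_0$ yields $\nabla f(\theta_0)=0$ and
\[ H(\theta_0)\;=\;J(\theta_0)^{T}\,{\rm diag}(1/\hat{p}_i)\,J(\theta_0),\qquad J=\partial p/\partial\theta. \]
As ${\rm diag}(1/\hat{p}_i)$ is positive definite, $\ker H=\ker J$, which agrees with $T_{\theta_0}\widehat\Theta=T_{\theta_0}M$, a subspace of dimension $d'$. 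Hence $H$ is positive semidefinite of rank exactly $d-d'$, and $f$ vanishes on $\widehat\Theta$ to second order in the transverse directions.

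Next I would invoke the Morse--Bott lemma applied to the real analytic function $f$ along the submanifold $\widehat\Theta$ to obtain local coordinates $(u,v)\in\R^{d'}\times\R^{d-d'}$ centered at $\theta_0$ in which $\widehat\Theta=\{v=0\}$ and $f(u,v)=v_1^2+\cdots+v_{d-d'}^2$. Passing to polar coordinates $v=r\omega$, $\omega\in S^{d-d'-1}$, reduces the local zeta integral to
\[ \zeta_{\theta_0}(z)=\int_0^{\e} r^{d-d'-1-2z}\,g(r)\,{\rm d}r,\qquad g(r)=\int\!\!\int \varphi(u,r\omega)\,{\rm d}\omega\,{\rm d}u, \]
where $g$ is smooth with $g(0)>0$ because $\varphi>0$ and the integration over $\widehat\Theta$ and $S^{d-d'-1}$ yields a strictly positive constant at $r=0$. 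Hence the smallest pole of $\zeta_{\theta_0}(z)$ sits at $z=(d-d')/2$ with multiplicity one, so ${\rm RLCT}_{\theta_0}(f;\varphi)=((d-d')/2,\,1)$.

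Finally, for a boundary point $\theta_0\in\widehat\Theta\cap{\rm bd}(\Theta)$ the analogue of (\ref{eq:rlct-constr}) for a smooth positive density (obtained by bounding $\varphi$ above and below by positive constants on a compact neighborhood) gives ${\rm RLCT}_{\Theta_0}(f;\varphi)\geq((d-d')/2,\,1)$, so boundary contributions cannot dominate. Taking the minimum via (\ref{eq:rlct-min}) delivers ${\rm RLCT}_\Theta(f;\varphi)=((d-d')/2,\,1)$, and the conclusion follows from Theorem~\ref{th:watanabe}. The most delicate step is the Morse--Bott reduction: it depends on the hypothesis $\widehat\Theta=M\cap\Theta$ carving out a smooth submanifold, which ensures $\ker J(\theta_0)=T_{\theta_0}M$ along all of $\widehat\Theta$ and therefore that the transverse Hessian is non-degenerate of the correct rank.
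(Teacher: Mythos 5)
Your proof is correct and follows essentially the same route as the paper: localize via (\ref{eq:rlct-min}), bring $f$ into the normal form $v_1^2+\cdots+v_{d-d'}^2$ in coordinates transverse to $\widehat{\Theta}$ (the paper asserts this local normal form directly and resolves it with a single blow-up at the origin, which is the same computation as your polar-coordinate substitution), and control boundary points with (\ref{eq:rlct-constr}). The only caveat is that both arguments rest on the transverse Hessian being nondegenerate of rank exactly $d-d'$ (equivalently ${\rm rank}\, J(\theta_0)=d-d'$), which does not follow formally from ``$\widehat{\Theta}=M\cap\Theta$ with $M$ smooth'' alone --- e.g.\ $p(\theta)=(\tfrac12+\theta^2,\tfrac12-\theta^2)$ has a smooth zero-dimensional fiber yet real log-canonical threshold $1/4$ --- so your explicit Hessian step makes visible a regularity assumption that the paper's proof leaves implicit.
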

\begin{proof}
By assumption (A1) there exist two constants $c,C>0$ such that $c<\varphi(\theta)<C$ on $\Theta$. Therefore 
$$
c\int_{\Theta}(f(\theta))^{-z}{\rm d}\theta<\zeta(z)<C\int_{\Theta}(f(\theta))^{-z}{\rm d}\theta
$$
and it follows that ${\rm RLCT}_{\Theta}(f;\varphi)={\rm RLCT}_{\Theta}(f)$. By Theorem \ref{th:watanabe} it suffices to prove the following lemma which generalises Proposition 3.3 in \cite{saito2007real}.
\begin{lem}\label{lem:smooth-rlct}
Let $\Theta\subset\R^{d}$ be a compact semianalytic set and $f\in\cA^{\geq}_{\Theta}(\R^{d})$. If there exists a smooth manifold $M\subseteq \R^{d}$ such that $\widehat{\Theta}=M\cap \Theta$ and $\theta_{0}\in \widehat{\Theta}$ then ${\rm RLCT}_{\theta_{0}}(f)={\rm RLCT}_{\Theta}(f)=(\frac{d-d'}{2},1)$ where $d'=\dim\widehat{\Theta}$. 
\end{lem}
To prove this recall that the real log-canonical threshold ${\rm RLCT}_{\theta_{0}}(f)$ does not depend on the choice of a neighborhood $W_{0}$ of $\theta_{0}$. Since $\widehat{\Theta}=M\cap \Theta$ and $M$ is a smooth manifold it follows that for each point $\theta_{0}$ of $\widehat{\Theta}$ there exists an open neighborhood $W_{0}$ of $\theta_{0}$ in $\R^{d}$ with local coordinates $w_{1},\ldots, w_{d}$ centered at $\theta_{0}$ such that the local equation of $X$ is $w_{1}^{2}+\cdots+w_{c}^{2}=0$, where $c=d-d'$. A single blow-up $\pi$ at the origin satisfies all the conditions of Hironaka's Theorem since in the new coordinates over one of the charts $f(\pi(u))=u_{1}^{2}a(u)$ where $a(u)$ is nowhere vanishing and $\pi'(u)=u_{1}^{c-1}$. For other charts the situation is the same and hence ${\rm RLCT}_{\theta_{0}}(f)=(c/2,1)$. Since by (\ref{eq:rlct-min}) ${\rm RLCT}_{\Theta}(f)=\min_{\theta_{0}\in\Theta}{\rm RLCT}_{W_{0}\cap \Theta}(f)$ it suffices to show that if $\theta_{0}$ is a boundary point of $\Theta$ then ${\rm RLCT}_{W_{0}\cap\Theta}(f)\geq (c/2,1)$. But this follows from (\ref{eq:rlct-constr}) and the fact that ${\rm RLCT}_{\theta_{0}}(f)= (c/2,1)$ as $\theta_{0}$ is a smooth point of $M$. The lemma is hence proved. 
\end{proof}

\section{General Markov models}\label{sec:tree-models}

In this section we formally define the general Markov model and give the asymptotic approximation for the marginal likelihood in the smooth case which is given by Theorem \ref{prop:regularcase}.

\subsection{Definition of the model class}\label{sec:model} All random variables considered in this paper are assumed to be binary with values in either $0$ or $1$. Let $T^{r}=(V,E)$ be a rooted tree. Recall that $n_{e}=|E|$ and $n_{v}=|V|$. For any $e=(k,l)\in E$ we say that $k$ and $l$ are \textit{adjacent} and $k$ is a \textit{parent} of $l$ and we denote it by $k=\pa(l)$. For every $\beta\in\{0,1\}^{V}$ let $p_{\beta}=\P(\bigcap_{v\in V}\{Y_{v}=\beta_{v}\})$. A \textit{Markov process} on a rooted tree $T^{r}$ is a sequence $Y=(Y_v)_{v\in V}$ of random variables such that for each $\beta=(\beta_{v})_{v\in V}\in \{0,1\}^{{V}}$
\begin{equation}\label{eq:p_albar}
p_\beta(\theta)=\theta^{(r)}_{\beta_{r}}\prod_{v\in V\setminus r} \theta^{(v)}_{\beta_{v}|\beta_{\pa(v)}},
\end{equation}
where $\theta^{(v)}_{\beta_{v}|\beta_{\pa(v)}}=\P(Y_v={\beta}_v|Y_{\pa(v)}=\beta_{\pa(v)})$ and $\theta^{(r)}_{\beta_{r}}=\P(Y_{r}=\beta_{r})$. In a more standard statistical language these models are just fully observed Bayesian networks on rooted trees. Since $\theta^{(v)}_{0|i}+\theta^{(v)}_{1|i}=1$ for all $v\in V$ and $i=0,1$ then the Markov process on $T^{r}$ defined by Equation (\ref{eq:p_albar}) has exactly $2{n_{e}}+1$ free parameters in the vector $\theta$:  one for the root distribution $\theta^{(r)}_{1}$ and two for each edge $(u,v)\in E$ given by  $\theta^{(v)}_{1|0}$ and $\theta^{(v)}_{1|1}$ and the vector of all parameters is denoted by $\theta$. The parameter space is $\Theta_T=[0,1]^{2{n_{e}}+1}$. Henceforth we usually omit the root $r$ in the notation writing $T$ to denote the rooted tree $T^{r}$.

The general Markov model on $T$ is induced from the Markov process  on $T$ by assuming that all the inner nodes represent hidden random variables. Hence we consider induced marginal probability distributions over the leaves of $T$. The set of leaves is denoted by $L$. We assume that $T$ has $n$ leaves and hence we can associate $L$ with $[n]$ with some arbitrary numbering of the leaves.  Let $Y=(X,H)$ where $X=(X_1,\ldots, X_n)$ denotes the variables represented by the leaves of $T$ and $H$ denotes the vector of variables represented by inner nodes, i.e. $X=(Y_{v})_{v\in L}$ and $H=(Y_{v})_{v\in V\setminus L}$. We define the general Markov model $\cM_{T}$ to be the model in the probability simplex $\Delta_{2^{n}-1}$ obtained by summing out in (\ref{eq:p_albar}) all possible values of the inner nodes. By definition $\cM_{T}$ is the image of the map $p:\Theta_T\rightarrow \Delta_{2^n-1}$ given by
\begin{equation}\label{eq:p_albar2}
p_{\a}(\theta)=\sum_{\cH} \theta^{(r)}_{\beta_{r}}\prod_{v\in V\setminus r} \theta^{(v)}_{\beta_{v}|\beta_{\pa(v)}}\quad \mbox{ for any $\a\in\{0,1\}^{L}$},
\end{equation}
where $\cH$ is the set of all vectors $\beta=(\beta_{v})_{v\in V}$ such that $(\beta_{v})_{v\in L}=\a$. For a more detailed treatment see Chapter 8 in \cite{semple2003pol}.


\subsection{The smooth case}\label{sec:smooth}

For $\hat{p}\in\cM_T$ let $\widehat{\Sigma}=[\hat{\mu}_{ij}]\in\R^{n\times n}$ be the covariance matrix of the random vector represented by the leaves of $T$. In \cite{pwz2010-identifiability} we show that the geometry of the $\hat{p}$-fiber $\widehat{\Theta}_{T}$ is determined by zeros in $ \widehat{\Sigma}$. We say that that an edge $e\in E$ is \textit{isolated relative to} $\hat{p}$ if $\hat{\mu}_{ij}=0$ for all $i,j\in[n]$ such that $e\in E(ij)$, where $E(ij)$ denotes the set of edges in the path joining $i$ and $j$. By $\widehat{E}\subseteq E$ we denote the set of all  edges of $T$ which are isolated relative to $\hat{p}$. By $\widehat{T}=(V,E\setminus \widehat{E})$ we denote the forest obtained from $T$ by removing edges in $\widehat{E}$. 
 
We now define relations on $\widehat{E}$ and $E\setminus \widehat{E}$. For two edges $e,e'$ with either $\{e,e'\}\subset \widehat{E}$ or $\{e,e'\}\subset E\setminus\widehat{E}$ write $e\sim e'$ if either $e=e'$ or $e$ and $e'$ are adjacent and all the edges that are incident with both $e$ and $e'$ are isolated relative to $\hat{p}$. Let us now take the transitive closure of $\sim$ restricted to pairs of edges in $\widehat{E}$ to form an equivalence relation on $\widehat{E}$. Similarly, take the transitive closure of $\sim$ restricted to the pairs of edges in $E\setminus \widehat{E}$ to form an equivalence relation in $E\setminus \widehat{E}$. We will let $[\widehat{E}]$ and $[E\setminus \widehat{E}]$ denote the set of equivalence classes of $\widehat{E}$ and $E\setminus \widehat{E}$ respectively. 

By the construction all the inner nodes of $T$ have either degree zero in $\widehat{T}$ or the degree is strictly greater than one. We say that a node $v\in V$ is \textit{non-degenerate with respect to} $\hat{p}$ if either $v$ is a leaf of $T$ or $\deg v\geq 2$ in $\widehat{T}$. Otherwise we say that the node is \textit{degenerate with respect to} $\hat{p}$. Note that this coincides with the definition of a degenerate node given in the introduction. The set of all nodes which are degenerate with respect to $\hat{p}$ is denoted by $\widehat{V}$. 

\begin{prop}[\cite{pwz2010-identifiability}, Theorem 5.4]\label{prop:regular}
Let $T$ be a tree with $n$ leaves. Let $\hat{p}\in\cM_{T}$ and let $\widehat{T}$ be defined as above. If each of the inner nodes of $T$ has degree at least two in $\widehat{T}$ then  $\widehat{\Theta}_{T}$ is a manifold with corners and $\dim\widehat{\Theta}_{T}=2l_{2}$, where $l_{2}$ is the number of nodes which have degree two in $\widehat{T}$. 
\end{prop}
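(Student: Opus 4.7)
The plan is to reparametrize the model $\cM_{T}$ using a moment-type coordinate system adapted to the tree, and then to read off the fiber $\widehat{\Theta}_{T}=p^{-1}(\hat{p})$ in the new coordinates. A convenient choice uses one mean-type parameter $s_{v}$ per vertex $v\in V$ together with one edge-correlation parameter $\rho_{e}$ per edge $e\in E$; this change of variables is a smooth bijection on the interior of the parameter cube $\Theta_{T}=[0,1]^{2n_{e}+1}$. In these coordinates the second moment of two leaves factorizes as $\hat{\mu}_{ij}=c_{ij}\prod_{e\in E(ij)}\rho_{e}$ for some nonzero scalar $c_{ij}$, so the pattern of zeros in $\widehat{\Sigma}$ translates directly into conditions on the $\rho_{e}$: an edge lies in $\widehat{E}$ precisely when it is forced to have $\rho_{e}=0$.

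Next I would carry out a local analysis at each inner node $v$, using the hypothesis $\deg_{\widehat{T}}(v)\geq 2$. If $\deg_{\widehat{T}}(v)\geq 3$, one can choose three leaves $i,j,k$ whose paths in $\widehat{T}$ meet at $v$; since $\hat{\mu}_{ij}\hat{\mu}_{ik}\hat{\mu}_{jk}\neq 0$, a standard moment-matrix rank argument (as developed in \cite{pwz2010-identifiability}) recovers $s_{v}$ together with the three incident edge correlations uniquely up to the label-swap symmetry $Y_{v}\leftrightarrow 1-Y_{v}$, so such a vertex contributes $0$ to the local fiber dimension. If $\deg_{\widehat{T}}(v)=2$ with incident edges $e_{1},e_{2}$, then only the product $\rho_{e_{1}}\rho_{e_{2}}$ and an analogous combination of the adjacent means are identified from the leaf marginal, so a two-dimensional family of solutions remains. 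Summing over all inner vertices yields $\dim\widehat{\Theta}_{T}=2l_{2}$.

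The remaining task is to upgrade this pointwise description to a manifold-with-corners structure on $\widehat{\Theta}_{T}$. At any $\theta_{0}\in\widehat{\Theta}_{T}$ lying in the interior of $\Theta_{T}$ the moment coordinates above provide a smooth chart of the expected dimension. At boundary points, where some $s_{v}$, $\rho_{e}$, or original transition probability is pinned to $\{0,1\}$, one must verify that the equations defining $\widehat{\Theta}_{T}$ meet the corresponding face of the cube transversally, so that a local model of the form $\R^{k}\times [0,\infty)^{\ell}$ applies. This boundary bookkeeping is the main technical obstacle, and it is precisely where the hypothesis that every inner node has degree at least two in $\widehat{T}$ enters: it prevents two or more independent vanishing conditions from colliding at the same point and creating a singular intersection. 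The explicit chart-by-chart verification is carried out in \cite{pwz2010-identifiability}.
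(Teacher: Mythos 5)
The paper itself gives no proof of this proposition: it is imported verbatim as Theorem~5.4 of \cite{pwz2010-identifiability}, and your outline follows the same route as that reference — pass to the $(s_{v},\eta_{u,v})$ coordinates of Section~4 (your $\rho_{e}$ are the paper's $\eta_{u,v}$), read the fiber off the zero pattern of $\widehat{\Sigma}$, and count two free parameters per degree-two vertex of $\widehat{T}$. So the strategy is the right one, and you correctly identify that the degree hypothesis is what keeps the fiber from becoming singular. Deferring the chart-by-chart verification to \cite{pwz2010-identifiability} is also no worse than what the paper does.

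There is, however, one substantive error rather than a mere omission. In these coordinates the covariance factorizes as $\hat{\mu}_{ij}=\tfrac{1}{4}\bigl(1-s_{r(ij)}^{2}\bigr)\prod_{e\in E(ij)}\eta_{e}$ (Proposition~\ref{lem:param} with $|I|=2$), and the prefactor is \emph{not} a nonzero scalar: it is a function of the parameters that can vanish on the fiber. Hence $\hat{\mu}_{ij}=0$ does not force $\eta_{e}=0$ for some $e\in E(ij)$; it forces $\eta_{e}=0$ \emph{or} $s_{r(ij)}^{2}=1$, and in general the fiber is a union of strata indexed by which alternative holds at each vertex and edge — exactly the decomposition (\ref{eq:p-fiber-decomp}) and the set $\widehat{\Omega}_{{\rm deep}}$ that the paper must introduce to handle the singular case. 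The real content of the hypothesis that every inner node has degree at least two in $\widehat{T}$ is that it eliminates the $s_{v}^{2}=1$ branches along the fiber, so that only a single smooth stratum survives and the corners come solely from the boundary of the parameter cube. By declaring $c_{ij}$ nonzero at the outset you assume away precisely the step the hypothesis exists to justify; the ``boundary bookkeeping'' you flag as the main obstacle is secondary to this. The local identifiability analysis at degree-$\geq 3$ and degree-$2$ vertices, and the resulting count $\dim\widehat{\Theta}_{T}=2l_{2}$, are otherwise in the right spirit.
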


For the case covered by Proposition \ref{prop:regular} we obtain a way to compute the asymptotic approximation for the marginal likelihood. 


\begin{prop}\label{prop:reg}
Let $\hat{p}\in\cM_{T}$ be such that each inner node of $T$ has degree at least two in $\widehat{T}$ and let $f$ be the normalized likelihood defined by (\ref{eq:def-f}). Then 
$$
{\rm RLCT}_{\Theta_{T}}(f)=\left(\frac{n_{v}+n_{e}-2l_{2}}{2},1\right).
$$
\end{prop}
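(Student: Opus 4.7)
The plan is to reduce to the smooth-case Lemma \ref{lem:smooth-rlct} via Proposition \ref{prop:regular}. First I would do the dimension bookkeeping: the parameter space $\Theta_{T}=[0,1]^{2n_{e}+1}$ has dimension $d=2n_{e}+1$, and the tree identity $n_{v}=n_{e}+1$ rewrites this as $d=n_{v}+n_{e}$. By Proposition \ref{prop:regular}, under the hypothesis that every inner node has degree at least two in $\widehat{T}$, the fiber $\widehat{\Theta}_{T}=p^{-1}(\hat{p})=f^{-1}(0)$ is a manifold with corners of dimension $d'=2l_{2}$. Substituting into Lemma \ref{lem:smooth-rlct} would yield ${\rm RLCT}_{\Theta_{T}}(f)=((d-d')/2,1)=((n_{v}+n_{e}-2l_{2})/2,1)$, which is exactly the claim.

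The next step is a local-to-global matter: Lemma \ref{lem:smooth-rlct} is formally stated for $\widehat{\Theta}=M\cap\Theta$ with a single ambient smooth manifold $M$, whereas Proposition \ref{prop:regular} only provides a manifold-with-corners structure inside the box $\Theta_{T}$. Fortunately the proof of Lemma \ref{lem:smooth-rlct} is purely local: at each $\theta_{0}\in\widehat{\Theta}$ one exhibits a smooth local model, computes ${\rm RLCT}_{\theta_{0}}(f)=((d-d')/2,1)$ by a single blow-up along the zero locus, and assembles the global answer via (\ref{eq:rlct-min}). The manifold-with-corners property supplies exactly such a local smooth model at every point, and at boundary points of $\Theta_T$ the inequality (\ref{eq:rlct-constr}) prevents the RLCT from dropping below the interior value. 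So I would rerun the local argument of Lemma \ref{lem:smooth-rlct} chart by chart on $\widehat{\Theta}_{T}$.

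The one analytic input to verify is the Morse--Bott nondegeneracy of $f$ transverse to $\widehat{\Theta}_{T}$, i.e., that in suitable local coordinates $f$ looks like $w_{1}^{2}+\cdots+w_{c}^{2}$ with $c=d-d'$. This reduces to positive definiteness of the transverse Hessian. Since $f(\theta)$ equals, up to an additive constant, the Kullback--Leibler divergence $\sum_{\a}\hat{p}_{\a}\log(\hat{p}_{\a}/p_{\a}(\theta))$, and $\hat{p}$ is strictly positive by (A2), its Hessian at any $\theta_{0}\in\widehat{\Theta}_{T}$ equals $J^{\top}{\rm diag}(1/\hat{p})\,J$ with $J=dp(\theta_{0})$. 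The kernel of this form is precisely the tangent space of $\widehat{\Theta}_{T}$ at $\theta_{0}$, so the induced quotient form on the normal bundle is positive definite. A single blow-up along the smooth local model then produces the monomial form $f(\pi(u))=u_{1}^{2}a(u)$ with Jacobian $\pi'(u)=u_{1}^{c-1}$, yielding ${\rm RLCT}_{\theta_{0}}(f)=(c/2,1)$ exactly as in Lemma \ref{lem:smooth-rlct}.

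The main obstacle I foresee is bookkeeping around the corners: ensuring that at points of $\widehat{\Theta}_{T}\cap{\rm bd}(\Theta_{T})$ the boundary of the box is transverse to the smooth local model (or, if not, that the corner contribution does not improve the pair below $(c/2,1)$). Once that verification is in place, the result follows by combining the dimension count with (\ref{eq:rlct-min}) and (\ref{eq:takemin}).
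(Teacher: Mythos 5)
Your proposal takes the same route as the paper's own proof: invoke Proposition \ref{prop:regular} to identify $\widehat{\Theta}_{T}$ as a smooth (manifold-with-corners) fiber of dimension $2l_{2}$, and feed the dimension count $\dim\Theta_{T}=n_{v}+n_{e}$ into Lemma \ref{lem:smooth-rlct}. The additional verifications you flag --- the positive definiteness of the transverse Hessian of the Kullback--Leibler divergence and the boundary estimate (\ref{eq:rlct-constr}) at corner points --- are details the paper leaves implicit, so the argument is correct and essentially identical.
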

\begin{proof}
Since every inner node of $T$ has degree at least two in $\widehat{T}$ then by Proposition \ref{prop:regular} there exists a smooth manifold $M\subseteq\R^{n_{v}+n_{e}}$ such that $\widehat{\Theta}_{T}=M\cap\Theta_{T}$ and $\dim\widehat{\Theta}=2 l_{2}$. The result follows from Proposition \ref{prop:regulargeneral} and the fact that $\dim \Theta_{T}=n_{v}+n_{e}$. \end{proof}

By Theorem \ref{th:watanabe}, Proposition \ref{prop:reg} implies Theorem \ref{prop:regularcase} since $l_{2}$ in its statement is exactly the number of inner nodes $v$ such that the degree of $v$ in $\widehat{T}$ is two. 

\begin{rem} Theorem \ref{prop:regularcase} is still true if (A1) is replaced by the assumption that the prior distribution is bounded on $\Theta_{T}$ and there exists an open subset of $\Theta_{T}$ with a non-empty intersection with $\widehat{\Theta}_{T}$ where the prior is strictly positive. In particular we can use conjugate Beta priors  $\theta^{(v)}_{1|i}\sim {\rm Beta}(\a^{(v)}_{i},\beta^{(v)}_{i})$ as long as $\a^{(v)}_{i},\beta^{(v)}_{i}\geq 1$.\end{rem}

\section{The ideal-theoretic approach}\label{sec:sing-case}

In this section we define the real log-canonical threshold of an ideal. Theorem \ref{th:barI} translated the problem of finding the real log-canonical threshold of the normalized log-likelihood into algebra. We then analyse the case of general Markov models. In Theorem \ref{th:redtokappas} we apply a useful change of coordinates which enables us to deal with the singular case in a more efficient. 

\subsection{The real log-canonical threshold of an ideal}
Let $f_{1},\ldots, f_{r}\in\cA_{\Theta}$ then the \textit{ideal generated} by $f_{1},\ldots,f_{r}$ is denoted by $$\la f_{1},\ldots,f_{r}\ra=\{f\in\cA_{\Theta}:\,\,f(\theta)=\sum_{i=1}^{r} h_{i}(\theta)f_{i}(\theta), h_{i}\in \cA_{\Theta}\}.$$ 
Following \cite{shaowei_rlct} we generalize the notion of the real log-canonical thresholds to the ideal $ I=\la f_{1},\ldots,f_{r}\ra$. This mirrors the analytic definition of the log-canonical threshold of an ideal (see e.g. \cite[Section 9.3.D]{lazarsfeld2004positivity}). By definition 
\begin{equation}\label{eq:idealRLCT}
{\rm RLCT}_{\Theta}( I;\varphi)={\rm RLCT}_{\Theta}(\la f_{1},\ldots, f_{r}\ra ;\varphi):={\rm RLCT}_{\Theta}(f;\varphi),
\end{equation}
 where $f(\theta)=f_{1}^{2}(\theta)+\cdots+f_{r}^{2}(\theta)$. 
By Proposition 4.5 in \cite{shaowei_rlct} the real log-canonical threshold does not depend on the choice of generators. We say that $I=\la f_{1},\ldots, f_{r}\ra$ is $\R$-nondegenerate if  $f=\sum f_{i}^{2}$ is $\R$-nondegenerate as given by Definition \ref{def:nondegenerate}. 

The following important proposition enables us to use the full power of the ideal-theoretic approach.
\begin{prop}\label{lem:rlct-repar}\label{lem:red-to-1}\label{lem:szacowanie}
Let $f,g\in\cA_{\Theta}(\R^{d})$ and let $ I$  be an ideal in $\cA_{\Theta}(\R^{d})$. Then 
\begin{description}
\item[i] Let $\rho:\Omega\rightarrow\Theta$ be a proper real analytic isomorphism. Denote  $\rho^{*} I=\{f\circ\rho:\,f\in I\}$ be the pullback of $ I$ on $\cA_{\Omega}$. Then, 
$$
{\rm RLCT}_{\Theta}( I;\varphi)={\rm RLCT}_{\Omega}(\rho^{*} I;(\varphi\circ \rho)|\rho'|),
$$ 
where $|\rho'|$ denotes the Jacobian of $\rho$.
\item[ii] If $\varphi$ is positive and bounded on $\Theta$ then
$$
{\rm RLCT}_{\Theta}( I;\varphi)={\rm RLCT}_{\Theta}( I).
$$
\item[iii] If there exist constants $c,c'>0$ such that $c\, g(\theta)\leq f(\theta)\leq c' g(\theta)$ for every $\theta\in \Theta$ then ${\rm RLCT}_{\Theta}(f)={\rm RLCT}_{\Theta}(g)$.
\item[iv] Let $ I=\la f_{1},\ldots, f_{r}\ra$ and $J=\la g_{1},\ldots, g_{r}\ra$ where $g_{i}=u_{i}f_{i}$ for $i=1,\ldots,r$ and there exist positive constants $c,C$ such that $c<u_{i}(\theta)<C$ for all $\theta\in \Theta$ and for all $i=1,\ldots,r$. Then ${\rm RLCT}_{\Theta}(I)={\rm RLCT}_{\Theta}(J)$.
\end{description}
\end{prop}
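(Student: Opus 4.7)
The overall plan is to reduce every part of the proposition to an assertion about the sum-of-squares function $f = \sum_{i=1}^{r} f_{i}^{2}$ via the definition \eqref{eq:idealRLCT}, and then to use Theorem \ref{th:watanabe} to turn sandwich inequalities on Laplace integrals into equalities of the pair $({\rm rlct},{\rm mult})$. Part (i) would be a direct substitution: since $\rho:\Omega\to\Theta$ is a proper analytic isomorphism, the change of variables $\theta=\rho(\omega)$ carries
\[
\zeta_{I}(z;\varphi) = \int_{\Theta} f(\theta)^{-z}\varphi(\theta)\,\dd\theta
\]
into
\[
\int_{\Omega} (f\circ\rho)(\omega)^{-z}(\varphi\circ\rho)(\omega)|\rho'(\omega)|\,\dd\omega,
\]
which is precisely $\zeta_{\rho^{*} I}\bigl(z;(\varphi\circ\rho)|\rho'|\bigr)$ because $f\circ\rho=\sum(f_{i}\circ\rho)^{2}$ is the sum-of-squares for the generators of $\rho^{*} I$. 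Identical zeta functions share all poles and multiplicities.

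Part (iii) is the pivotal step. Set $I_{h}(N)=\int_{\Theta} e^{-Nh(\theta)}\,\dd\theta$. The hypothesis $cg\le f\le c'g$ on $\Theta$ gives $e^{-Nc'g(\theta)}\le e^{-Nf(\theta)}\le e^{-Ncg(\theta)}$ for every $N>0$, and integrating together with the identity $I_{tg}(N)=I_{g}(tN)$ yields the sandwich $I_{g}(c'N)\le I_{f}(N)\le I_{g}(cN)$. For any fixed $t>0$, Theorem \ref{th:watanabe} applied to $g$ gives
\[
\log I_{g}(tN) = -{\rm rlct}_{\Theta}(g)\log N + ({\rm mult}_{\Theta}(g)-1)\log\log N + O(1),
\]
so the sandwich forces $\log I_{f}(N)$ to have exactly this form to order $O(1)$. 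Comparing with Theorem \ref{th:watanabe} applied directly to $f$ produces $(r_{f}-r_{g})\log N+(m_{f}-m_{g})\log\log N=O(1)$, and the incommensurable growth rates of $\log N$ and $\log\log N$ force $r_{f}=r_{g}$ and $m_{f}=m_{g}$, i.e.\ ${\rm RLCT}_{\Theta}(f)={\rm RLCT}_{\Theta}(g)$.

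Parts (ii) and (iv) fall out quickly. For (ii), the hypothesis $c\le\varphi\le C$ on $\Theta$ gives $cI_{f,1}(N)\le I_{f,\varphi}(N)\le CI_{f,1}(N)$, and the same Watanabe-asymptotic comparison yields ${\rm RLCT}_{\Theta}(f;\varphi)={\rm RLCT}_{\Theta}(f)$. For (iv), the pointwise bounds $c<u_{i}<C$ produce $c^{2}\sum f_{i}^{2}\le\sum u_{i}^{2}f_{i}^{2}\le C^{2}\sum f_{i}^{2}$ on $\Theta$, so (iii) applied to $f=\sum f_{i}^{2}$ and $g=\sum u_{i}^{2}f_{i}^{2}$ closes the argument.

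The main obstacle is preserving the \emph{multiplicity} of the smallest pole, not only its location: a direct sandwich on the zeta function only pins down the smallest real pole and not the order of vanishing of the relevant factor after meromorphic continuation. Routing everything through Theorem \ref{th:watanabe} avoids this difficulty because the multiplicity enters the Laplace-integral asymptotics as the coefficient of $\log\log N$, which lives strictly below the $\log N$ order and therefore cannot be hidden by an $O(1)$ additive perturbation of $\log I(N)$. An alternative proof independent of Theorem \ref{th:watanabe} would localize via Hironaka's theorem and observe that bounded positive smooth factors are absorbed into the nowhere-vanishing unit $c_{\beta}(u)$ in each monomial chart, leaving the exponent pairs $(h_{i},r_{i})$ that determine the RLCT invariant.
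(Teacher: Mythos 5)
Your proof is correct and follows essentially the same route as the paper: part (i) by change of variables in the zeta function (the content of the cited Proposition 4.7 of Lin, which the paper invokes directly), parts (ii) and (iii) by sandwiching the Laplace integral and reading off both the $\log N$ and $\log\log N$ coefficients via Theorem \ref{th:watanabe}, and part (iv) as a corollary of (iii) applied to the sums of squares.
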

\begin{proof}
The first result is a direct consequence of Proposition 4.7, \cite{shaowei_rlct}. The second and the third follow easily from the interpretation of the real log-canonical threshold and its multiplicity as coefficients in the asymptotic approximation of $I(N)$ in (\ref{eq:laplace2}). The last statement follows from the third.
\end{proof}
In the statistical context given in Section \ref{sec:marg-like}, expressing this problem in the language of ideals simplifies reductions. 
\begin{thm}\label{th:barI} Let $p=(p_{1},\ldots,p_{m}):\, \Theta\rightarrow \Delta_{m-1}$ be a polynomial mapping and $\cM=p(\Theta)$ be the statistical model of $X\in [m]$. For a given $\hat{p}\in\cM $ define
\begin{equation}\label{eq:idealI0}
\overline{\cI} =\la p_{1}(\theta)-\hat{p}_{1},\ldots, p_{m}(\theta)-\hat{p}_{m}\ra\subset\cA_{\Theta }.\end{equation}
Let $f$ denote the normalized likelihood defined by (\ref{eq:def-f}) and $\varphi$ the prior distribution on $\Theta$ satisfying (A1). Then we have that
\begin{equation}\label{eq:thm1.2}
{\rm RLCT}_{\Theta }(f;\varphi)={\rm RLCT}_{\Theta }(\overline{\cI};\varphi)={\rm RLCT}_{\Theta }(\overline{\cI}).
\end{equation}
\end{thm}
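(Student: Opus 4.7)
\medskip
\noindent\textbf{Proof plan for Theorem \ref{th:barI}.} The plan is to first reduce the statement to a claim purely about non-negative analytic functions (no prior), then identify the ideal-side RLCT with the RLCT of the sum-of-squares $g(\theta):=\sum_{i=1}^m (p_i(\theta)-\hat{p}_i)^2$, and finally sandwich the normalised log-likelihood $f$ against $g$ in a neighbourhood of $\widehat{\Theta}=p^{-1}(\hat{p})$ via a Taylor expansion of the Kullback--Leibler divergence.

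First I would invoke Proposition \ref{lem:rlct-repar}(ii): assumption (A1) says that $\varphi$ is positive and bounded on the compact set $\Theta$, so ${\rm RLCT}_\Theta(f;\varphi)={\rm RLCT}_\Theta(f)$ and ${\rm RLCT}_\Theta(\overline{\cI};\varphi)={\rm RLCT}_\Theta(\overline{\cI})$. This already gives the second equality of \eqref{eq:thm1.2} and reduces the task to proving ${\rm RLCT}_\Theta(f)={\rm RLCT}_\Theta(\overline{\cI})$. By the defining relation \eqref{eq:idealRLCT}, ${\rm RLCT}_\Theta(\overline{\cI})={\rm RLCT}_\Theta(g)$. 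Note that $f$ and $g$ are both non-negative and share the zero locus $\widehat{\Theta}=p^{-1}(\hat{p})$.

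The main step is the comparison $f\asymp g$ near $\widehat{\Theta}$. Writing $f(\theta)=\ell(\hat{p};\hat{p})-\ell(p(\theta);\hat{p})=\sum_i \hat{p}_i\log(\hat{p}_i/p_i(\theta))$, we recognise $f$ as the KL divergence $D(\hat{p}\,\|\,p(\theta))$. Taylor-expanding in $\delta_i:=p_i(\theta)-\hat{p}_i$ (which satisfy $\sum_i \delta_i=0$) gives
$$
f(\theta)\;=\;\tfrac{1}{2}\sum_{i=1}^m \frac{(p_i(\theta)-\hat{p}_i)^2}{\hat{p}_i}\;+\;O\bigl(\|p(\theta)-\hat{p}\|^3\bigr).
$$
Assumption (A2) makes $\hat{p}$ strictly positive, so the coefficients $1/\hat{p}_i$ are finite and bounded, and there is a neighbourhood of $\hat{p}$ in the simplex on which this quadratic term dominates the cubic remainder. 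Consequently on some neighbourhood $U$ of $\widehat{\Theta}$ in $\Theta$ we obtain uniform constants $0<c<c'$ with $c\,g(\theta)\le f(\theta)\le c'\,g(\theta)$; moreover on $U$ the map $p(\theta)$ avoids the boundary of $\Delta_{m-1}$, so $f$ is genuinely real-analytic there.

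To pass from the local sandwich to an equality of global RLCTs I would use the localisation formula \eqref{eq:takemin}: since both $f$ and $g$ vanish exactly on $\widehat{\Theta}$, one has ${\rm RLCT}_\Theta(h)=\min_{\theta_0\in\widehat{\Theta}}{\rm RLCT}_{\Theta_0}(h)$ for $h\in\{f,g\}$, with $\Theta_0=W_0\cap\Theta$ and $W_0$ an arbitrarily small ball around $\theta_0$. Shrinking $W_0$ so that $W_0\subset U$, Proposition \ref{lem:rlct-repar}(iii) applied on $\Theta_0$ gives ${\rm RLCT}_{\Theta_0}(f)={\rm RLCT}_{\Theta_0}(g)$ for every $\theta_0\in\widehat{\Theta}$; taking the minimum over $\theta_0$ yields ${\rm RLCT}_\Theta(f)={\rm RLCT}_\Theta(g)={\rm RLCT}_\Theta(\overline{\cI})$, completing the proof. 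The one delicate point, which is where I expect most of the bookkeeping to happen, is justifying that the analytic structure of $f$ is inherited from that of $g$ in a neighbourhood of $\widehat{\Theta}$ (the Taylor remainder must be controlled uniformly on $U$, and $U$ must sit in the interior of the simplex so that $\log p_i(\theta)$ is analytic). Positivity of $\hat{p}$ from (A2) and compactness of $\Theta$ are exactly what is needed to make this step uniform.
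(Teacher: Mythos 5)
Your proposal is correct and follows essentially the same route as the paper, whose proof of the first equality simply defers to the proof of Theorem 1.2 in Lin's work (shaowei\_rlct) — an argument that is precisely the KL-divergence versus sum-of-squares comparison you spell out — and obtains the second equality from Proposition \ref{lem:szacowanie}(ii) exactly as you do. Your version just makes explicit the Taylor expansion, the role of (A2) in bounding $1/\hat{p}_i$, and the localisation via \eqref{eq:takemin} that the paper leaves to the citation.
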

\begin{proof}
The first equation follows essentially the same lines as the proof of Theorem 1.2 in \cite{shaowei_rlct}. The second equation follows from Proposition \ref{lem:szacowanie} ii. \end{proof}

\subsection{A reparameterization of the model}

To realize how the formulation in terms of the ideals may be useful we first need to introduce a new generating set for the ideal $\overline{\cI}$ in (\ref{eq:idealI0}). By Proposition 4.5 in \cite{shaowei_rlct} this does not affect our computations. Then we take a pullback of $\overline{\cI}$ under a polynomial isomorphism. We note that this is the algebraized version of the analytic reductions applied in \cite{rusakov2006ams}.

Following \cite{pwz2010-identifiability} we perform a change of coordinates on the model space and parameter space. Let $T$ be a tree with $n$ leaves. In this case the change of the generating set for $\overline{\cI}$ is induced by the following series of transformations. First we express the raw probabilities $p_{\a}$ for $\a\in\{0,1\}^{n}$ in terms of a new system of variables given by the non-central moments $\lambda_{\a}=\E X^{\a}$. This change is a simple linear map $f_{p\lambda}:\R^{2^{n}}\rightarrow\R^{2^{n}}$ with the determinant equal to one. Thus $\mathbf{\lambda}=f_{p\lambda}(p)$ is defined as follows
\begin{equation}\label{eq:lambda_in_p}
\lambda_\a=\sum_{\a\leq \beta\leq \mathbf{1}} p_\beta\qquad\mbox{ for any } \a\in\{0,1\}^{n}, 
\end{equation}
where $\mathbf{1}$ denotes here the vector of ones and the sum is over all binary vectors $\beta$ such that $\a\leq \beta\leq \mathbf{1}$ in the sense that $\a_i\leq \beta_i\leq 1$ for all $i=1,\ldots,n$. To define the next change of coordinates we change indexing of the non-central moments in such a way that $\lambda_{\a}$ for $\a\in\{0,1\}^{n}$ is denoted by $\lambda_{A}$ for $A\subseteq [n]$ where $i\in A$ if and only if $\a_{i}=1$. The linearity of the expectation implies that the central moments can be expressed in terms of the non-central moments. We have  
\begin{equation}\label{eq:central}
\mu_I=\sum_{J\subseteq I}(-1)^{|J|} \lambda_{I\setminus J} \prod_{i\in J} \lambda_{i} \quad \mbox{ for }I\subseteq [n], \,\,|I|\geq 2. 
\end{equation}
Moreover, there is an algebraic isomorphism between the non-central moments $\lambda_{I}$ for all non-empty $I\subseteq [n]$ and all the means $\lambda_{i}=\E X_{i}$ supplemented with all the central moments $\mu_{I}=\E \left(\prod_{i\in I} (X_{i}-\lambda_{i})\right)$ for $I\in [n]_{\geq 2}$, where $[n]_{\geq k}$ denotes all subsets of $[n]$ with at least $k$ elements (see Appendix A in \cite{pwz2010-identifiability}). In particular we obtain in this way a change of coordinates from the non-central moments $\lambda_{I}$ for $I\subseteq [n]$ to the central moments supplemented by the means.

To specify the last change of coordinates we need some basic combinatorics. We define a partially ordered set (or poset) $\Pi_{T}$  of all partitions of the set of leaves $[n]$ obtained by removing some inner edges of $T$ and considering connected components of the resulting forest. The elements of $\Pi_{T}$ are partitions $\pi=B_{1}|\cdots|B_{k}$ where each $B_{i}$ is called a \textit{block} of $\pi$. The ordering on this poset is induced from the ordering on the poset of all partitions on $[n]$ (see Example 3.1.1.d, \cite{stanley2006enumerative}). Thus for $\pi=B_{1}|\cdots|B_{k}$ and $\nu=C_{1}|\cdots|C_{l}$ we write $\pi\leq \nu$ if and only if every block of $\pi$ is contained in one of the blocks of $\nu$.

 For any poset $\Pi$ we define its M\"{o}bius function $\mm:\Pi\times \Pi\rightarrow\R$ (c.f. \cite[Chapter 3]{stanley2006enumerative}) by setting
$$\mm(\pi,\pi)=1 \mbox{ for every }\pi\in\Pi\,\mbox{ and }\,\mm(\pi,\nu)=-\sum_{\pi\leq \delta\leq \nu}\mm_{I}(\pi,\delta).$$  For any $I\subseteq [n]$ we define $T(I)$ as the minimal subtree of $T$ containing $I$ in its vertex set. By $\mm_{I}$ we denote the M\"{o}bius function of $\Pi_{T(I)}$ and by $\hat{1}_{I}$ the maximal one-block partition of $\Pi_{T(I)}$. 

The last system of coordinates is given by $n$ means $\lambda_{i}$ for $i\in 1,\ldots,n$ and $\kappa_{I}$ for all $I\in[n]_{\geq 2}$, where 
\begin{equation}\label{eq:kappa}
\kappa_I=\sum_{\pi\in\Pi_{{T}(I)}} \mm_I(\pi,\hat{1}_{I}) \prod_{B\in \pi}\mu_{B}.
\end{equation}
We note that in particular $\kappa_{ij}=\mu_{ij}$ for all $i,j\in [n]$. This change of coordinates, from probabilities $p_{\a}$ for $\a\in\{0,1\}^{n}$ to $(\lambda_{i},\kappa_{I})$ for $i=1,\ldots,n$ and $I\in [n]_{\geq 2}$ is denoted by $f_{p\kappa}$. It is an algebraic isomorphism given that $\sum_{\a}p_{\a}=1$ (see Appendix A, \cite{pwz2010-identifiability}). The new set of coordinates is called the system of \textit{tree cumulants}.
\begin{exmp}\label{ex:quartet1}
Consider the quartet tree model, i.e. the hidden tree Markov model given by the graph in Figure \ref{fig:quartet}.
\begin{figure}[h]
\centering
\includegraphics{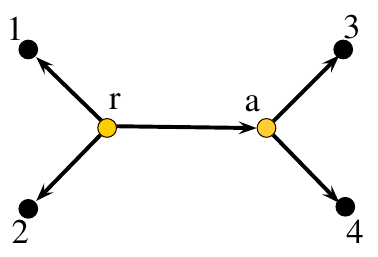}
\caption{A quartet tree}\label{fig:quartet}
\end{figure}
The tree cumulants are given by $15$ coordinates: $\lambda_{i}=\E  X_{i}$ for $i=1,2,3,4$ and  $\kappa_{I}$ for $I\in[4]_{\geq 2}$. We have $\kappa_{ij}=\mu_{ij}={\rm Cov}(X_{i},X_{j})$ for $1\leq i<j\leq 4$ and $\kappa_{ijk}=\mu_{ijk}$ for all $1\leq i<j<k\leq 4$. However tree cumulants of higher order cannot be equated to corresponding central moments but only expressed as functions of them. Thus in this case by (\ref{eq:kappa})
$$
\kappa_{1234}=\mu_{1234}-\mu_{12}\mu_{34}.
$$ 
\end{exmp}

The next step is to change the coordinates on the parameter space of the model. Define the following set of ${n_{v}}+{n_{e}}$ parameters. For every directed edge $(u,v)\in E$ let
\begin{eqnarray}\label{eq:uij}
\eta_{u,v}=\theta^{(v)}_{1|1}-\theta^{(v)}_{1|0} \mbox{ and}\\
\nonumber s_v=1-2\lambda_v\quad\mbox{for each } v\in V,
\end{eqnarray}
where $\lambda_{v}=\E Y_{v}$ is a polynomial in the original parameters $\theta$. Let $(r,v_{1},\ldots, v_{k},v)$ be a directed path in $T$. Then
$$
\lambda_{v}=\sum_{\a\in\{0,1\}^{k+1}} \theta^{(v)}_{1|\a_{k}}\theta^{(v_{k})}_{\a_{k}|\a_{k-1}}\cdots \theta^{(r)}_{\a_{r}}.
$$
We denote the new parameter space by $\Omega_T$ and the coordinates by  $\omega=((s_v),(\eta_{u,v}))$ for ${v\in V, (u,v)\in E}$. 

Simple linear constraints defining $\Theta_{T}$ become only slightly more complicated when expressed in the new parameters. The choice of parameter values is not free anymore in the sense that constraints for each of the parameters involve other parameters. $\Omega_{T}$ is given by $s_{r}\in [-1,1]$ and for each $(u,v)\in E$ (c.f. Equation (19) in \cite{pwz2010-identifiability})
\begin{equation}\label{eq:constraints}
\begin{array}{l}
-(1+s_{v}) \leq (1-s_{u})\eta_{u,v} \leq (1-s_{v})\\
-(1-s_{v}) \leq (1+s_{u})\eta_{u,v} \leq (1+s_{v}).
\end{array}
\end{equation}
Since $T$ is a tree then  $2{n_{e}}+1={n_{v}}+{n_{e}}$ and hence $\dim \Omega_T=\dim \Theta_T$. The change of parameters defined above is denoted by $f_{\theta\omega}:\,\Theta_T\rightarrow \Omega_T$. It is a polynomial isomorphism with the inverse denoted by $f_{\omega\theta}$ (see \cite[Section 4]{pwz2010-identifiability}). Recall that for $I\subseteq [n]$ by $T(I)=(V(I),E(I))$ we denote the subtree of $T$ spanned on $I$. Let $r(I)$ denote the root of $T(I)$. Then for instance if $T$ is the quartet tree in Figure \ref{fig:quartet} then for $T(34)$:  $E(34)=\{(a,3),(a,4)\}$ and $r(34)=a$. The parameterization of $\cM_T$ in the system of tree cumulants is given as a map $\psi_{T}:\Omega_{T}\rightarrow f_{p\kappa}(\Delta_{2^{n}-1})$ by the following proposition.
\begin{prop}[Proposition 4.1 in \cite{pwz2010-identifiability}]\label{lem:param}
Let $T=(V,E)$ be a rooted trivalent tree with $n$ leaves. Then for each $i=1,\ldots, n$ one has $\lambda_{i}=\frac{1}{2}(1-s_{i})$ and
\begin{equation}\label{eq:param}
\kappa_I(\omega)=\frac{1}{4}(1-s_{r(I)}^2)\prod_{v\in V(I)\setminus I} s_v^{\deg v-2}  \prod_{(u,v)\in E(I)} \eta_{u,v}\quad\mbox{ for all } I\in [n]_{\geq 2},
\end{equation}
where the degree of $v\in V(I)$ is considered in the subtree $T(I)$. 
\end{prop}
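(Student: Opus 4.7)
The first assertion $\lambda_i=\tfrac{1}{2}(1-s_i)$ is immediate from the defining identity $s_v=1-2\lambda_v$ applied to $v=i$. For the formula for $\kappa_I(\omega)$, my plan is to compute the central moments $\mu_J$ directly in the new coordinates via an innovations representation of the Markov process, and then to apply the Möbius inversion (\ref{eq:kappa}) defining $\kappa_I$ and observe that most of the resulting terms cancel.

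The structural tool is an innovations decomposition along each edge of $T$. Since $\E[Y_v\mid Y_u]=\theta^{(v)}_{1|0}+\eta_{u,v}Y_u$ for each $(u,v)\in E$, one has $Y_v-\lambda_v=\eta_{u,v}(Y_u-\lambda_u)+\delta_v$, where the residual $\delta_v:=Y_v-\E[Y_v\mid Y_u]$ has mean zero conditional on all non-descendants of $v$, and different $\delta_v$'s are mutually uncorrelated by the tree Markov property. Iterating this identity from each leaf $i\in I$ upwards to the root $r(I)$ of $T(I)$ expresses $Y_i-\lambda_i$ as a sum of products of $\eta_{u,v}$'s along a tree path, times either the ``root innovation'' $Y_{r(I)}-\lambda_{r(I)}$ or an intermediate $\delta_v$.

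Substituting into $\mu_I=\E\prod_{i\in I}(Y_i-\lambda_i)$ and distributing, the surviving terms after the expectation is taken are indexed by partitions $\pi\in\Pi_{T(I)}$: each block $B$ collects those leaves of $I$ whose innovation paths ``meet'' at a common ancestor in $T(I)$ not separated by a noise split. Using the binary identity $\E[(Y_v-\lambda_v)^k]=\lambda_v(1-\lambda_v)\,q_k(s_v)$ with $q_2\equiv 1$ and $q_3(s)=s$, one checks that the contribution of block $B$ is $\tfrac{1}{4}(1-s_{r(B)}^2)\prod_{v\in V(B)\setminus I}s_v^{\deg_B v-2}\prod_{(u,v)\in E(B)}\eta_{u,v}$, where $\deg_B v-2\in\{0,1\}$ since $T$ is trivalent. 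Applying (\ref{eq:kappa}), the alternating Möbius sum on $\Pi_{T(I)}$ vanishes whenever $\pi\neq\hat{1}_I$, annihilating every disconnected contribution and leaving only the fully-connected term for $\pi=\hat{1}_I$, which is precisely the stated product.

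The principal obstacle will be verifying that the innovations expansion reorganizes exactly along $\Pi_{T(I)}$ with the correct multiplicities so that the Möbius inversion collapses cleanly. I would handle this combinatorial bookkeeping by induction on $|I|$: pick a cherry in $T(I)$ whose apex $a$ has two children $i,j\in I$; condition on $Y_a$ to replace the cherry by a single effective leaf at $a$; then apply the induction hypothesis to the smaller tree $T((I\setminus\{i,j\})\cup\{a\})$ and reassemble, using the innovations identity at the edges $(a,i)$ and $(a,j)$ to supply the two $\eta$-factors and the binary central-moment contribution at $a$.
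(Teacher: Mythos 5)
Your plan has the right shape (innovations along the tree, binary central-moment identities, M\"obius inversion on $\Pi_{T(I)}$), but the central claim it rests on is false as stated. You assert that after taking expectations the innovations expansion of $\mu_I$ reorganizes term-by-term along $\Pi_{T(I)}$, with block $B$ contributing exactly $\tfrac14(1-s_{r(B)}^2)\prod_v s_v^{\deg v-2}\prod_e\eta_e$. That cannot hold: the residual $\delta_v=Y_v-\E[Y_v\mid Y_{\pa(v)}]$ is only conditionally mean-zero given $Y_{\pa(v)}$, it is \emph{not} independent of $Y_{\pa(v)}$, so mixed moments do not factor. Already for the quartet tree rooted at $a$, the only two surviving terms of the expansion of $\mu_{1234}$ are $\eta_{a,1}\eta_{a,2}\eta_{b,3}\eta_{b,4}\eta_{a,b}^2\,\E[A^4]$ and $\eta_{a,1}\eta_{a,2}\eta_{b,3}\eta_{b,4}\,\E[A^2\delta_b^2]$ with $A=Y_a-\lambda_a$; neither of these equals $\kappa_{1234}$ or $\mu_{12}\mu_{34}$ individually, only their sum does. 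So the innovations decomposition and the partition decomposition are two different ways of splitting the same total, and the M\"obius cancellation you invoke is not available on the innovations side.

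Your fallback cherry induction is closer to a real proof, but as sketched it doesn't reduce to one instance of the induction hypothesis. Conditioning on $Y_a$ for a cherry with leaves $i,j$ gives $\E[(Y_i-\lambda_i)(Y_j-\lambda_j)\mid Y_a]=\eta_{a,i}\eta_{a,j}(Y_a-\lambda_a)^2$, and the binary identity you need is $(Y_a-\lambda_a)^2=\tfrac14(1-s_a^2)+s_a(Y_a-\lambda_a)$. Plugging this in yields the genuine two-term recursion
$$\mu_I=\eta_{a,i}\eta_{a,j}\left[\tfrac14(1-s_a^2)\,\mu_{I\setminus\{i,j\}}+s_a\,\mu_{(I\setminus\{i,j\})\cup\{a\}}\right],$$
which mixes a moment in which the apex $a$ is discarded with one in which it is promoted to an observed leaf. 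You cannot simply ``replace the cherry by a single effective leaf at $a$''; you have to carry both branches of this recursion through the M\"obius inversion on $\Pi_{T(I)}$ and show that the resulting bookkeeping closes. That is the missing combinatorial core of the argument, and the above identity (which your sketch never isolates) is the device that makes it tractable. Without it the proposal does not establish the stated product formula.
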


We obtain the following diagram where the induced parameterisation $\psi_{T}$ is given in the bottom row.
\begin{equation}\label{eq:diagram}
\xymatrixcolsep{4pc}\xymatrix{
\Theta_{T} \ar@<1ex>[d]^{f_{\theta\omega}} \ar[r]^{f_{T}} &\Delta_{2^{n}-1}\ar@<1ex>[d]^{f_{p\kappa}}\\
\Omega_{T}\ar@<1ex>[u]^{f_{\omega\theta}} \ar@{-->}[r]^{\psi_{T}} &\cK_{T}\ar@<1ex>[u]^{f_{\kappa p}}}
\end{equation}
Let $\cI$ denote the pullback of the ideal $\overline{\cI}\subseteq \cA_{\Theta_{T}}$ to the ideal in $\cA_{\Omega_{T}}$induced by $f_{\theta\omega}$. Thus $\cI=f_{\omega\theta}^{*}\overline{\cI}=\{f\circ f_{\omega\theta}:\, f\in \overline{\cI}\}$. The ideal describes $\widehat{\Omega}_{T}=f_{\theta\omega}(\widehat{\Theta}_{T})$ as a subset of $\Omega_{T}$. The pullback of $\overline{\cI}$ satisfies
\begin{equation}\label{eq:pullback}
{\cI}=\la \lambda_{1}-\hat{\lambda}_{1},\ldots, \lambda_{n}-\hat{\lambda}_{n}\ra+\left(\sum_{I\in[n]_{\geq 2}}\la \kappa_{I}(\omega)-\hat{\kappa}_{I}\ra\right),
\end{equation}
where $\hat{\lambda}_{i}$ and $\hat{\kappa}_{I}$ are the corresponding coordinates of $f_{p\kappa}(\hat{p})$. Here the sum of ideals results in another ideal with the generating set which is the sum of generating sets of the summands.

For local computations we use the following reduction.
\begin{prop}[Proposition 4.6 in \cite{shaowei_rlct}]\label{lem:product-rlct}
Let $I\subseteq \cA_{x_{0}}(\R^{m})$, $ J\subseteq\cA_{y_{0}}(\R^{n})$ be two ideals. If  ${\rm RLCT}_{x_{0}}( I)=(\lambda_{x},m_{x})$ and ${\rm RLCT}_{y_{0}}( J)=(\lambda_{y},m_{y})$ then $${\rm RLCT}_{(x_{0},y_{0})}( I+ J)=(\lambda_{x}+\lambda_{y},m_{x}+m_{y}-1).$$
\end{prop}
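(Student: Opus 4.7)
The plan is to reduce this statement, via the definition of the real log-canonical threshold of an ideal, to a product decomposition of the associated Laplace integral. Write $I=\la f_{1},\ldots,f_{r}\ra\subseteq\cA_{x_{0}}(\R^{m})$ and $J=\la g_{1},\ldots,g_{s}\ra\subseteq\cA_{y_{0}}(\R^{n})$, viewed inside $\cA_{(x_{0},y_{0})}(\R^{m+n})$ in the obvious way (so that $f_{i}$ depends only on the $x$-variables and $g_{j}$ only on the $y$-variables). Then $I+J$ is generated by $\{f_{1},\ldots,f_{r},g_{1},\ldots,g_{s}\}$, and by definition (\ref{eq:idealRLCT}) together with the generator-independence of Proposition \ref{lem:rlct-repar} (via Proposition 4.5 of \cite{shaowei_rlct}),
$$
{\rm RLCT}_{(x_{0},y_{0})}(I+J)={\rm RLCT}_{(x_{0},y_{0})}\!\left(F(x)+G(y)\right),
$$
where $F(x)=\sum_{i}f_{i}^{2}(x)$ and $G(y)=\sum_{j}g_{j}^{2}(y)$. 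Similarly ${\rm RLCT}_{x_{0}}(I)={\rm RLCT}_{x_{0}}(F)=(\lambda_{x},m_{x})$ and ${\rm RLCT}_{y_{0}}(J)={\rm RLCT}_{y_{0}}(G)=(\lambda_{y},m_{y})$.

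Next I would pass to Laplace integrals and exploit the fact that the integrand separates. Choose a product neighbourhood $W=W_{x_{0}}\times W_{y_{0}}$ of $(x_{0},y_{0})$, which is admissible for computing local RLCTs because the latter are independent of the choice of neighbourhood. Then
$$
I(N):=\int_{W}e^{-N(F(x)+G(y))}\,\dd x\,\dd y=\left(\int_{W_{x_{0}}}e^{-NF(x)}\dd x\right)\left(\int_{W_{y_{0}}}e^{-NG(y)}\dd y\right)=:I_{F}(N)\,I_{G}(N).
$$
Taking logarithms gives $\log I(N)=\log I_{F}(N)+\log I_{G}(N)$, and applying Theorem \ref{th:watanabe} separately to $I_{F}$ and $I_{G}$ (with trivial prior, which is justified by Proposition \ref{lem:szacowanie} ii) yields
$$
\log I(N)=-(\lambda_{x}+\lambda_{y})\log N+(m_{x}+m_{y}-2)\log\log N+O(1).
$$
Finally I would apply Theorem \ref{th:watanabe} in the reverse direction to $F(x)+G(y)$: the asymptotic coefficient of $\log N$ identifies ${\rm rlct}_{(x_{0},y_{0})}(F+G)=\lambda_{x}+\lambda_{y}$, and the coefficient of $\log\log N$ equals ${\rm mult}_{(x_{0},y_{0})}(F+G)-1$, giving ${\rm mult}_{(x_{0},y_{0})}(F+G)=m_{x}+m_{y}-1$. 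This establishes the claimed formula.

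The only genuine subtlety is that Theorem \ref{th:watanabe} determines the pair $({\rm rlct},{\rm mult})$ uniquely from the asymptotic expansion of the Laplace integral only because $\log N$ and $\log\log N$ are asymptotically independent scales; this uniqueness is part of what the theorem provides, so no separate argument is needed. The product-of-integrals step itself is routine once the neighbourhood has been chosen as a Cartesian product, and the bookkeeping with priors is absorbed by Proposition \ref{lem:szacowanie} ii, so there is no substantive obstacle beyond correctly tracking the $-1$ that arises in the multiplicity because $\log\log N$ appears with coefficient ${\rm mult}-1$ rather than ${\rm mult}$.
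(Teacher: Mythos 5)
The paper does not prove this statement---it is imported verbatim as Proposition 4.6 of \cite{shaowei_rlct}---so there is no internal proof to compare against; your argument must be judged on its own. It is correct, and it is the standard (essentially Thom--Sebastiani style) argument: reduce to the sums of squares $F(x)=\sum_i f_i^2$ and $G(y)=\sum_j g_j^2$, factor the Laplace integral over a product neighbourhood as $I_F(N)\,I_G(N)$, apply Theorem \ref{th:watanabe} to each factor, and read the pair $({\rm rlct},{\rm mult})$ back off from the expansion of the product, using that the scales $\log N$ and $\log\log N$ determine the pair uniquely modulo $O(1)$. You correctly flag the one inversion subtlety, and the replacement of a small ball by a small product box in the definition of the local RLCT is harmless (sandwich a box between two balls and use additivity of the zeta function under decomposition of the domain, noting the leading Laurent coefficients are positive so no cancellation of poles occurs); the paper itself freely uses $\e$-boxes for exactly such local computations. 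The only case your argument silently excludes is the degenerate one where some generator is a unit at the base point, so that one of the local thresholds is $(\infty,\cdot)$ and the Laplace factor decays exponentially rather than polynomially; the proposition is only ever invoked in the finite case, so this is not a defect in practice, but it is worth stating the finiteness hypothesis explicitly.
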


\begin{thm}\label{th:redtokappas}
Let $T$ be a rooted tree with $n$ leaves and $\hat{p}\in \cM_{T}$.  Let $\overline{\cI}$ be the ideal defined by (\ref{eq:idealI0}) and $\cI$ the ideal defined by (\ref{eq:pullback}). Then
\begin{equation}\label{eq:min-w-omega}
{\rm RLCT}_{\Theta_{T}}(\overline{\cI})={\rm RLCT}_{\Omega_{T}}(\cI)=\min_{\omega_{0}\in\widehat{\Omega}_{T}} {\rm RLCT}_{\Omega_{0}}(\cI),
\end{equation}
where $\Omega_{0}$ is a sufficiently small neighborhood of $\omega_{0}$ in $\Omega_{T}$. Let $\cJ=\sum_{I\in[n]_{\geq 2}}\la \kappa_{I}(\omega)-\hat{\kappa}_{I}\ra$ then for every $\omega_{0}\in \widehat{\Omega}_{T}$
\begin{equation}\label{eq:Inadwa}
{\rm RLCT}_{\omega_{0}}(\cI)=\left(\frac{n}{2},0\right)+{\rm RLCT}_{\omega_{0}}(\cJ).
\end{equation}
\end{thm}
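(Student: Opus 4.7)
For the first equality of (\ref{eq:min-w-omega}) the plan is to transfer the computation from $\Theta_T$ to $\Omega_T$ via the polynomial isomorphism $f_{\theta\omega}$. Since $f_{\omega\theta}$ is also a polynomial map, the Jacobian $|f_{\omega\theta}'|$ is a polynomial that does not vanish on $\Omega_T$ and is therefore bounded between two positive constants on the compact set $\Omega_T$. Combined with (A1), this lets me apply Proposition \ref{lem:rlct-repar}(ii) to absorb $\varphi$, then (i) to pull back via $f_{\omega\theta}$, then (ii) again to drop the new weight, giving ${\rm RLCT}_{\Theta_T}(\overline{\cI})={\rm RLCT}_{\Omega_T}(\cI)$. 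The second equality follows from (\ref{eq:rlct-min}) together with the observation (as in (\ref{eq:takemin})) that the sum of squares of the generators of $\cI$ vanishes precisely on $\widehat{\Omega}_T=f_{\theta\omega}(\widehat{\Theta}_T)$, so that the local RLCT at any point outside $\widehat{\Omega}_T$ equals $(\infty,d)$ and the minimum is attained on $\widehat{\Omega}_T$.

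For (\ref{eq:Inadwa}) I would exploit a clean separation of variables available in the tree-cumulant coordinates. By (\ref{eq:uij}) one has $\lambda_i-\hat\lambda_i=-\tfrac12(s_i-\hat s_i)$ for every leaf $i$, so Proposition \ref{lem:rlct-repar}(iv) lets me replace the first summand in (\ref{eq:pullback}) by $\cI_1:=\la s_1-\hat s_1,\ldots,s_n-\hat s_n\ra$, an ideal in the $n$ leaf $s$-parameters alone. On the other hand Proposition \ref{lem:param} expresses each $\kappa_I(\omega)$ as a polynomial in $s_{r(I)}$, in the $s_v$ for $v\in V(I)\setminus I$, and in the $\eta_{u,v}$ for $(u,v)\in E(I)$. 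Because $|I|\geq 2$ the lowest common ancestor $r(I)$ has at least two distinct descendants in $I$ and so cannot itself be a leaf of $T$, while the vertices in $V(I)\setminus I$ are by construction internal to $T(I)$ and hence internal to $T$. Thus no leaf $s$-parameter appears in any generator of $\cJ$, and $\cI_1$ and $\cJ$ live in disjoint coordinate subsystems.

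With this decoupling in place, Proposition \ref{lem:product-rlct} gives ${\rm RLCT}_{\omega_0}(\cI)=(\lambda_1+\lambda_2,\,m_1+m_2-1)$, where $(\lambda_1,m_1)$ and $(\lambda_2,m_2)$ are the local RLCTs of $\cI_1$ and $\cJ$ at the corresponding coordinate projections of $\omega_0$. The associated function for $\cI_1$ is $\sum_{i=1}^n(s_i-\hat s_i)^2$, whose zero locus is a smooth codimension-$n$ affine subspace, so Lemma \ref{lem:smooth-rlct} gives $(\lambda_1,m_1)=(n/2,1)$; substituting yields $(n/2+\lambda_2,\,m_2)$, which equals $(n/2,0)+{\rm RLCT}_{\omega_0}(\cJ)$ in the additive notation of the theorem. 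The main obstacle is the combinatorial verification that $\cJ$ is free of the leaf $s$-parameters; once that structural fact about $r(I)$ and $V(I)\setminus I$ is in hand, the remaining steps are routine applications of Propositions \ref{lem:rlct-repar} and \ref{lem:product-rlct}.
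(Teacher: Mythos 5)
Your argument for the first equality in (\ref{eq:min-w-omega}) is fine (the paper observes the Jacobian of $f_{\omega\theta}$ is in fact constant, but your weaker boundedness argument suffices), and your decoupling strategy for (\ref{eq:Inadwa}) is the right idea when $T$ is rooted at an inner node. However, the combinatorial claim on which your separation of variables rests, namely that ``$r(I)$ cannot itself be a leaf of $T$,'' is false in the remaining case.

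If $T$ is rooted at a leaf $r\in[n]$, then for every $I\in[n]_{\geq 2}$ with $r\in I$ the root of the subtree $T(I)$ is $r$ itself, since $r$ is the vertex of $T(I)$ nearest the root of $T$. Thus $r(I)=r$ is a leaf, the factor $(1-s_{r(I)}^2)=(1-s_r^2)$ in Proposition~\ref{lem:param} involves the leaf parameter $s_r$, and $\cJ$ and $\cI_1=\langle s_1-\hat s_1,\ldots,s_n-\hat s_n\rangle$ are \emph{not} supported on disjoint sets of coordinates. Your appeal to Proposition~\ref{lem:product-rlct} therefore does not apply directly. The gap is repaired exactly as in the paper: by (A2) one has $s_r^0\in(-1,1)$, so on a small box $W$ around $\omega_0$ the factor $1-s_r^2$ is bounded between positive constants; Proposition~\ref{lem:szacowanie}(iv) then lets one replace $1-s_r^2$ by $1$ in each generator $\kappa_I(\omega)-\hat\kappa_I$ with $r\in I$ without changing the local RLCT, after which the decoupling and the application of Proposition~\ref{lem:product-rlct} go through. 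Without this additional step your proof only establishes (\ref{eq:Inadwa}) for trees rooted at an inner node, whereas Theorem~\ref{th:main} is stated (and its proof genuinely uses) the case where the root is a leaf.
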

\begin{proof}
Since $f_{\omega\theta}$ is an isomorphism with a constant Jacobian then the first part of the theorem follows from Proposition \ref{lem:rlct-repar} (i).

Let $W$ be an $\e$-box around $\omega_{0}$. If $T$ is rooted in an inner leaf then by Proposition \ref{lem:param} the ideal $\cJ$ does not depend on $s_{1},\ldots, s_{n}$. Since for every $i=1,\ldots, n$ the expression $\lambda_{i}-\hat{\lambda}_{i}$ depends only on $s_{i}$ then 
$${\rm RLCT}_{\omega_{0}}(\la \lambda_{1}-\hat{\lambda}_{1},\ldots, \lambda_{n}-\hat{\lambda}_{n}\ra)=(\frac{n}{2},1),$$
which can be easily checked (see e.g. Proposition 3.3 in \cite{saito2007real}). Equation (\ref{eq:Inadwa}) follows from Proposition \ref{lem:product-rlct}. 

Now assume that $T$ is rooted in one of the leaves. In this case both $\la \lambda_{1}-\hat{\lambda}_{1},\ldots, \lambda_{n}-\hat{\lambda}_{n}\ra$ and $\cJ$ depend on $s_{r}$ because $\kappa_{I}(\omega)=(1-s_{r}^{2})f_{I}(\omega)$ for some monomial $f_{I}(\omega)$ whenever $r\in I$. Therefore we cannot use Proposition \ref{lem:product-rlct} directly. However, by assumption (A2) $s_{i}^{0}\in (-1,1)$ for $i=1,\ldots, n$. Hence for each $\omega_{0}$ and a sufficiently small $\e$ one can find two  positive constants $c(\e), C(\e)$ such that $c(\e)\leq 1-s_{r}^{2}\leq C(\e)$ in $W$. By Proposition \ref{lem:szacowanie} (iv) the real log canonical threshold of $\cJ$ in $W$ is equal to the real log-canonical threshold of a an ideal with generators induced from the generators of $\cJ$ by replacing each $ 1-s_{r}^{2}$ by $1$. Now again (\ref{eq:Inadwa}) follows from Proposition \ref{lem:product-rlct}.
\end{proof}

\section{The main reduction step}\label{sec:splittrees}

In this section we show that the computations can be reduced to two main cases. First, when $\hat{p}$ is such that $\hat{\kappa}_{ij}\neq 0$ for all $i,j\in [n]$. Second, when $\hat{p}$ is such that $\hat{\kappa}_{ij}= 0$ for all $i,j\in [n]$. Moreover, the second case is reduced to computations for monomial ideals which are usually amenable to various combinatorial techniques.

Let $T$ be a trivalent tree with $n\geq 3$ leaves and let $\hat{p}\in \cM_{T}$. If all the equivalence classes in $[\widehat{E}]$ are singletones or $[\widehat{E}]$ is empty, which is equivalent to the fact that every inner node has degree at least two in $\widehat{T}$, then Theorem \ref{prop:regularcase} gives us the asymptotic approximation for the marginal likelihood. Thus let assume that there is at least one nontrivial class in $[\widehat{E}]$. Let $T_1,\ldots,T_k$ denote trees representing the equivalence classes in  $[\widehat{E}]$ and let $S_1,\ldots,S_m$ denote trees induced by the connected components of  $E\setminus\widehat{E}$. Let $L_1,\ldots L_k$  denote the sets of leaves of $T_1,\ldots, T_k$. For each $S_{i}$ $i=1,\ldots, m$ by Remark 5.2 (iv) in \cite{pwz2010-identifiability} its set of leaves denoted by $[n_{i}]$ is a subset of $[n]$. We illustrate this notation  using the graph below where the dashed edges represent edges in $\widehat{E}$.
\begin{center}
\includegraphics[scale=0.7]{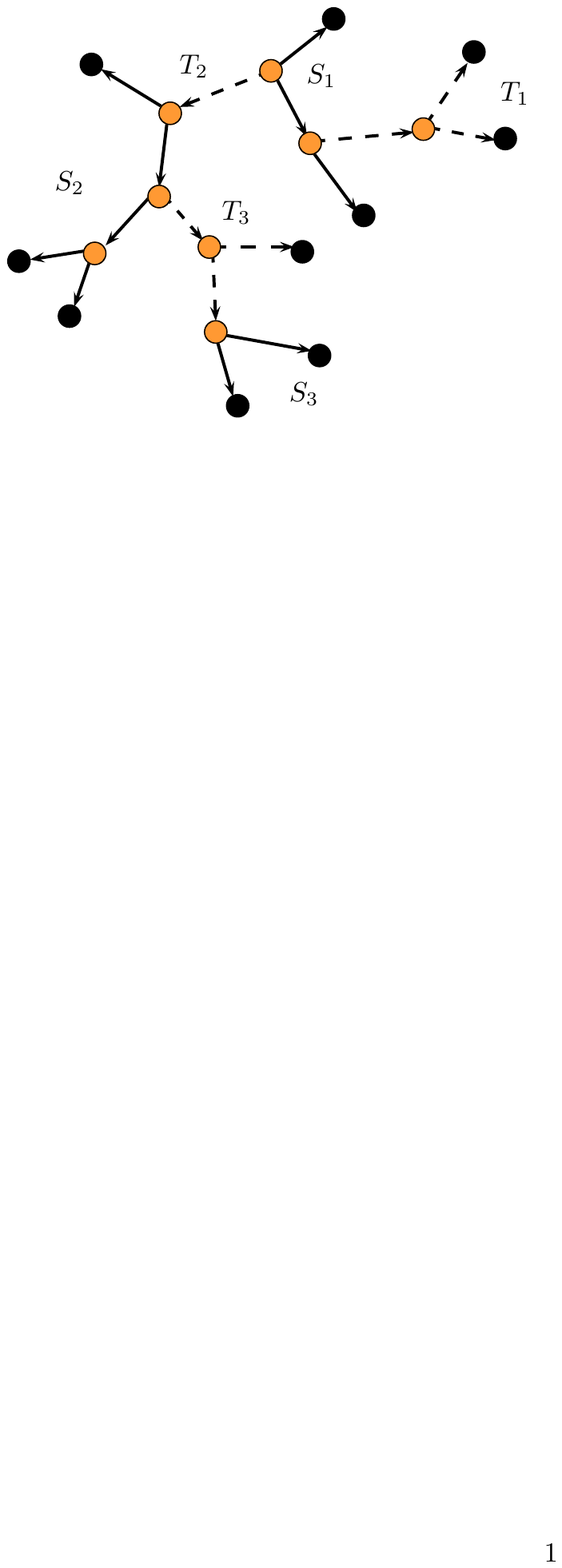}
\end{center}

\begin{lem}\label{lem:redzQ}
Let $T=(V,E)$ be a trivalent rooted tree with $n\geq 4$ leaves and let $\hat{p}\in\cM_{T}$. Let $\cJ=\sum_{I\in[n]_{\geq 2}}\la\kappa_{I}(\omega)-\hat{\kappa}_{I}\ra$. If $\omega_{0}\in \widehat{\Omega}_{T}$ then 
\begin{equation}\label{eq:IisJplussth}
{\rm RLCT}_{\omega_{{0}}}(\cJ)=\sum_{i=1}^{m}{\rm RLCT}_{\omega_{{0}}}(\cJ(S_{i}))+\sum_{i=1}^{k}{\rm RLCT}_{\omega_{{0}}}(\cJ(T_{i}))+(0,1-m-k),
\end{equation}
where $\cJ(S_{i})=\sum_{I\in[n_{i}]_{\geq 2}}\la\kappa_{I}(\omega)-\hat{\kappa}_{I}\ra$ for $i=1,\ldots,m$ and $\cJ(T_{i})=\sum_{w,w'\in L_{i}}\la\kappa_{ww'}(\omega)\ra$ for $i=1,\ldots, k$.
\end{lem}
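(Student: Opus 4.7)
The plan is to reduce to Proposition \ref{lem:product-rlct}. I want to show that, as ideals in $\cA_{\omega_{0}}$, $\cJ$ coincides (up to a change of generators that preserves the RLCT, via Proposition \ref{lem:szacowanie}) with the sum $\sum_{i=1}^{m}\cJ(S_{i})+\sum_{i=1}^{k}\cJ(T_{i})$, and that the $m+k$ summands involve pairwise disjoint sets of the $\omega$-parameters. Once this is established, Proposition \ref{lem:product-rlct} applied $m+k-1$ times yields precisely the RLCT pair on the right-hand side of \eqref{eq:IisJplussth}: the multiplicity correction $(0,1-m-k)$ is exactly what the iterated rule $(\lambda_{x}+\lambda_{y},m_{x}+m_{y}-1)$ produces from $m+k$ summands.

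The first step is to classify generators of $\cJ$ by the structure of $T(I)$. If $T(I)\subseteq S_{j}$ for some $j$ (equivalently, $I\subseteq [n_{j}]$), then $\kappa_{I}(\omega)-\hat{\kappa}_{I}$ is by definition a generator of $\cJ(S_{j})$ and depends only on the $\eta$-parameters on edges of $S_{j}$ together with the associated interior $s$-parameters. If instead $T(I)$ contains at least one edge belonging to some $T_{l}$, then I claim $\hat{\kappa}_{I}=0$: the factorisation of Proposition \ref{lem:param} applied to $\hat{\omega}$ contains a factor $\hat{\eta}_{e}$ for an isolated edge $e\in T(I)$, and isolation of $e$ forces $\hat{\eta}_{e}=0$ (since $\hat{\mu}_{ij}=0$ for every pair of leaves separated by $e$, cf.\ \cite{pwz2010-identifiability}). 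Hence in this crossing case the generator is the monomial $\kappa_{I}(\omega)$, which by Proposition \ref{lem:param} factorises as a product of $\kappa$-monomials, one for each $S_{j}$ and each $T_{l}$ met by $T(I)$, with the $T_{l}$-factor being of the form $\kappa_{ww'}(\omega)$ for some $w,w'\in L_{l}$ and hence a generator of $\cJ(T_{l})$. Using that each such factor already lies in the sum $\sum_{j}\cJ(S_{j})+\sum_{l}\cJ(T_{l})$, absorbing the remaining factors as elements of $\cA_{\omega_{0}}$, gives the inclusion $\cJ\subseteq\sum_{j}\cJ(S_{j})+\sum_{l}\cJ(T_{l})$. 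The reverse inclusion is immediate for the $\cJ(S_{j})$ summands and follows by the same factorisation argument for the $\cJ(T_{l})$ summands, once one notes that $\kappa_{ww'}(\omega)$ is a monomial factor of $\kappa_{I}(\omega)$ for any $I\in[n]_{\geq 2}$ whose subtree contains the $w$-to-$w'$ path in $T_{l}$.

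The second step is to verify that the summands genuinely live in disjoint sets of parameters. The $\eta$-parameters are indexed by edges, and every edge of $T$ belongs to exactly one $S_{j}$ or one $T_{l}$. For the $s_{v}$-parameters the analysis requires more care, since a vertex $v$ may sit in the closure of several subtrees; however the prefactor $\tfrac{1}{4}(1-s_{r(I)}^{2})$ and the factors $s_{v}^{\deg v-2}$ associated with vertices where only degree-two splittings occur are bounded away from zero in a sufficiently small neighborhood of $\omega_{0}$ by assumption (A2). Using Proposition \ref{lem:szacowanie} (iv) to absorb these units into the generators, one replaces each $\cJ(S_{j})$ and each $\cJ(T_{l})$ by an RLCT-equivalent ideal whose generators use strictly the parameters indexed by the interior of the corresponding subtree. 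Then the variable sets are pairwise disjoint and Proposition \ref{lem:product-rlct} applies.

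The main obstacle is the bookkeeping in the second step: showing in detail that, for each $I$ whose subtree crosses several components, the product decomposition of $\kappa_{I}(\omega)$ factors cleanly across $S_{j}$'s and $T_{l}$'s in parameters that, after absorbing the units near $\omega_{0}$, no longer overlap. Beyond this, one must confirm the structural fact $\hat{\kappa}_{I}=0$ whenever $T(I)$ crosses an isolated edge, which I am taking from the identifiability analysis in \cite{pwz2010-identifiability}. Once these are in place the lemma follows immediately from the iterated product formula of Proposition \ref{lem:product-rlct}.
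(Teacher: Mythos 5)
Your overall strategy coincides with the paper's: split $\cJ$ across the components $S_j$ and $T_l$, absorb unit factors via Proposition \ref{lem:szacowanie} (iv) until the summands involve pairwise disjoint variables, and then iterate Proposition \ref{lem:product-rlct} to get the $(0,1-m-k)$ correction. However, two steps do not go through as written. The first is your reverse inclusion for the $T_l$-summands. You justify $\cJ(T_l)\subseteq\cJ$ by noting that $\kappa_{ww'}(\omega)$ is a monomial factor of $\kappa_I(\omega)$ for \emph{any} $I$ whose subtree contains the $w$-to-$w'$ path; but divisibility gives $\kappa_I\in\la\kappa_{ww'}\ra$, which is the opposite of what you need — it does not place $\kappa_{ww'}$ in $\cJ$. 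To conclude $\kappa_{ww'}\in\cJ$ up to a unit of $\cA_{\omega_0}$ you must produce a \emph{specific} pair of leaves $i,j$ of $T$ with $E(ij)\cap\widehat{E}=E(ww')$, so that $\kappa_{ij}(\omega)=u(\omega)\,\kappa_{ww'}(\omega)$ with the cofactor $u$ built only from $\eta_e$, $e\notin\widehat{E}$, and hence nonvanishing at $\omega_0$. For an arbitrary $I$ containing the path, the cofactor may involve isolated edges of other components $T_{l'}$ and therefore vanishes at $\omega_0$, so no unit absorption is possible. The existence of such a pair $i,j$ is precisely the combinatorial point the paper proves, using that $w$ and $w'$ are either leaves of $T$ or have degree at least two in $\widehat{T}$; it is not automatic and is the heart of the identity $\cJ_1=\cJ_2$ in the paper's argument.

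The second problem is your claim that the prefactors $\tfrac14(1-s_{r(I)}^2)$ are bounded away from zero near $\omega_0$ ``by (A2)''. Assumption (A2) only forces $s_i^0\in(-1,1)$ at the \emph{leaves}; for a degenerate inner node $v\in\widehat{V}$ the fiber $\widehat{\Omega}_T$ contains points with $s_v^0=1$ (indeed all of $\widehat{\Omega}_{\rm deep}$), so when $r(ij)\in\widehat{V}$ the factor $1-s_{r(ij)}^2$ is not a unit and cannot be discarded. The paper retains exactly the factor $(1-s_{r(ij)}^2)^{\delta_{r(ij)}}$ with $\delta_{r(ij)}=1$ iff $r(ij)\in\widehat{V}$ (see (\ref{eq:monmom1})), and these surviving $s$-variables are what later make the multiplicity depend on whether the root is degenerate (Lemma \ref{lem:multiplicities} and the $\log\log N$ term in Theorem \ref{th:main}); dropping them changes the answer. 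Disjointness of the variable sets across summands still holds, but because each retained $s_{r(ww')}$ is an inner node of its own $T_l$ and the shared boundary nodes are nondegenerate — not because all $s$-prefactors are units. With these two repairs your argument reduces to the paper's proof.
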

\begin{proof}
We first show that $\sum_{I:\hat{\kappa}_{I}=0}\la\kappa_{I}(\omega)\ra=\sum_{i,j:\hat{\kappa}_{ij}=0}\la\kappa_{ij}(\omega)\ra$. The inclusion ``$\supseteq$'' is clear. We now show ``$\subseteq$''. First note that for every $I\in [n]_{\geq 2}$ if $\hat{\kappa}_{I}=0$ then either $\eta_{e}^{0}=0$ for an edge $e\in E(I)$ or $s_{r(I)}^{2}=1$. It is easy to check there exist $i,j\in I$ such that such that $\hat{\kappa}_{ij}=0$ and the $r(ij)=r(I)$. It follows by Proposition \ref{lem:param} that $\kappa_{I}(\omega)=\kappa_{ij}(\omega) f(\omega)$ for a polynomial $f(\omega)$ and therefore the inclusion ``$\subseteq$'' is also true. This implies
$$\cJ=\sum_{I:\hat{\kappa}_{I}\neq 0}\la\kappa_{I}(\omega)-\hat{\kappa}_{I}\ra+\sum_{I:\hat{\kappa}_{I}= 0}\la\kappa_{I}(\omega)\ra= \sum_{i=1}^{m}\cJ(S_{i}) +\sum_{i,j:\hat{\kappa}_{ij}= 0}\la\kappa_{ij}(\omega)\ra.$$ 
Hence to proof the lemma it suffices to show that for every $\omega_{0}\in \widehat{\Omega}_{T}$
\begin{equation}\label{eq:rlctredstep}
{\rm RLCT}_{\omega_{0}}(\sum_{i=1}^{m}\cJ(S_{i})+ \sum_{i,j:\hat{\kappa}_{ij}=0}\la\kappa_{ij}(\omega)\ra) 
\end{equation}
is equal to the right hand side of (\ref{eq:IisJplussth}). 

If $e\in E\setminus \widehat{E}$ then by definition there exist $i,j\in [n]$ such that $\hat{\kappa}_{ij}\neq 0$ and $e\in E(ij)$. Since by Proposition \ref{lem:param} $\hat{\kappa}_{ij}=\eta_{e}^{0}f(\omega_{0})$ for a polynomial $f$ then in particular $\eta_{e}^{0}\neq 0$. It follows that for a sufficiently small $\e$ for each $E'\subseteq E\setminus \widehat{E}$ one can find positive constants $c(\e),C(\e)$ such that $c(\e)\leq \prod_{e\in E'}\eta_{e}\leq C(\e)$ holds in the $\e$-box around $\omega_{0}$. Similarly if $v\notin \widehat{V}$ (c.f. Section \ref{sec:smooth}) then there exist positive constants $d(\e), D(\e)$ such that $d(\e)\leq (1-s_{v}^{2})\leq D(\e)$ in the $\e$-box around $\omega_{0}$. It follows by Proposition \ref{lem:szacowanie} (iv) that in computations of the real log-canonical threshold in (\ref{eq:rlctredstep}) each $\kappa_{ij}(\omega)$ can be replaced by
\begin{equation}\label{eq:monmom1}
(1-s_{r(ij)}^{2})^{\delta_{r(ij)}}\!\!\!\!\!\!\prod_{e\in E(ij)\cap \widehat{E}} \!\!\!\eta_{e}
\end{equation}
where $\delta_{r(ij)}=1$ if  $r(ij)\in \widehat{V}$ and $\delta_{r(ij)}=0$ otherwise. Thus in (\ref{eq:rlctredstep}) we can replace the ideal $\sum_{i,j:\hat{\kappa}_{ij}=0}\la\kappa_{ij}(\omega)\ra$ by the ideal $\cJ_{1}=\sum_{i,j:\,\hat{\kappa}_{ij}=0} \la(1-s_{r(ij)}^{2})^{\delta_{r(ij)}}\prod_{e\in E(ij)\cap \widehat{E}} \eta_{e} \ra$. However, if we define
\begin{equation}\label{eq:monmom2}
\cJ_{2}=\sum_{i=1}^{k}\sum_{w,w'\in L_{i}} \la (1-s_{r(ww')}^{2})^{\delta_{r(ww')}}\!\!\!\prod_{e\in E(ww')} \!\!\!\eta_{e}\ra
\end{equation}
then it can be checked that $\cJ_{1}=\cJ_{2}$. To show this fix ${j}=1,\ldots, k$ and $w,w'\in L_{j}$. Note that by construction each of $w,w'$ either has degree two in $\widehat{T}$ or is a leaf of $T$. Hence by the definition of $\widehat{E}$ there exist $i,j\in [n]$ such that $E(ij)\cap\widehat{E}=E(ww')$. It follows that each generator in (\ref{eq:monmom2}) is also in the set of generators of $\cJ_{1}$ and hence $\cJ_{2}\subseteq \cJ_{1}$. To show the opposite inclusion note that if $E(ij)$ intersects with more than one component $T_{1},\ldots, T_{k}$ then the corresponding generator in (\ref{eq:monmom1}) is a product of some generators in (\ref{eq:monmom2}) and hence it lies in $\cJ_{2}$. 

Since the generators of every $\cJ(S_{i})$ for $i=1,\ldots,m$ and every {$$\sum_{w,w'\in L_{j}} \la (1-s_{r(ww')}^{2})^{\delta_{r(ww')}}\prod_{e\in E(ww')} \eta_{e}\ra$$} for $j=1,\ldots,k$ involve disjoint sets of variables then by Proposition \ref{lem:product-rlct} the term in  (\ref{eq:rlctredstep}) is equal to 
$$
\sum_{i=1}^{m}{\rm RLCT}_{\omega_{0}}(\cJ(S_{i}))+ \sum_{i=1}^{k}{\rm RLCT}_{\omega_{0}}(\!\!\!\sum_{w,w'\in L_{j}} \la (1-s_{r(ww')}^{2})^{\delta_{r(ww')}}\!\!\!\prod_{e\in E(ww')} \!\!\!\eta_{e}\ra)+(0,1-m-k). 
$$
It can be easily checked that by Proposition \ref{lem:szacowanie} (iv) for each $i=1,\ldots,k$
$$
{\rm RLCT}_{\omega_{0}}(\!\!\!\sum_{w,w'\in L_{i}} \la (1-s_{r(ww')}^{2})^{\delta_{r(ww')}}\!\!\!\prod_{e\in E(ww')} \!\!\!\eta_{e}\ra)={\rm RLCT}_{\omega_{0}}(\cJ(T_{i}))
$$
which finishes the proof.
\end{proof}

\section{The case of zero covariances}\label{sec:zerocase}

In this subsection we assume that $\hat{\kappa}_{ij}=0$ for all $i,j\in [n]$. The aim  is to prove the following proposition.
\begin{prop}\label{prop:zerocase}
Let $T$ be a trivalent tree with $n\neq 3$ leaves rooted in $r\in V$. Let $\hat{p}\in \cM_{T}$ be such that $\hat{\kappa}_{ij}=0$ for all $i,j\in [n]$. Let $\cJ=\sum_{i,j\in [n]}\la\kappa_{ij}(\omega)\ra$. Then  
$$\min_{\omega_{0}\in \widehat{\Omega}_{T}}{\rm RLCT}_{\omega_{0}}(\cJ)=(\frac{n}{4},m),$$ 
where $m=1$ if either $r$ is a leaf of $T$ or $r$ together with all its neighbors are all inner nodes of $T$. In all other cases we cannot obtain an explicit bound for $m$ and hence  $m\geq 1$.
\end{prop}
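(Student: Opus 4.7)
The plan is to reduce the rlct computation to a Newton-polyhedron problem for a monomial ideal. By Proposition \ref{lem:param}, the trivalent hypothesis forces every inner vertex on the path between two leaves to have degree two in $T(ij)$, hence each generator of $\cJ$ is
\[
\kappa_{ij}(\omega)=\tfrac14(1-s_{r(ij)}^2)\prod_{e\in E(ij)}\eta_e.
\]
For $\omega_0\in\widehat{\Omega}_T$ set $Z:=\{e\in E:\eta_e(\omega_0)=0\}$ and $S:=\{v\in V:s_v(\omega_0)^2=1\}$; then $\omega_0\in\widehat{\Omega}_T$ is equivalent to the condition that every pair $(i,j)$ satisfies $E(ij)\cap Z\neq\emptyset$ or $r(ij)\in S$. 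By (A2), $s_i(\omega_0)^2<1$ for every leaf $i$, so $S$ contains no leaves. After a translation $s_v\mapsto t_v$ for $v\in S$ so that $1-s_v^2$ becomes a unit times $t_v$ near $\omega_0$, and absorbing the factors that are bounded away from zero on a small neighborhood of $\omega_0$ via Proposition \ref{lem:szacowanie}(iv), the local rlct of $\cJ$ equals the rlct at the origin of the monomial ideal
\[
\cJ(Z,S)=\Big\langle \prod_{e\in E(ij)\cap Z}\eta_e\cdot t_{r(ij)}^{\mathbf{1}[r(ij)\in S]}\;:\;i<j\in[n]\Big\rangle
\]
in the $d:=|Z|+|S|$ variables $\{\eta_e\}_{e\in Z}\cup\{t_v\}_{v\in S}$.

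Next, apply the Newton-polyhedron formula recalled in Section \ref{sec:rlct} to $f':=\sum_{i<j}m_{ij}^2$: one has ${\rm rlct}_{\omega_0}(\cJ)=1/\tau^*(Z,S)$ where $\tau^*(Z,S)=\min\{\tau\geq 0:\tau\mathbf{1}\in N(f')+\R^d_{\geq 0}\}$, and the multiplicity equals the codimension of the smallest face of $N(f')+\R^d_{\geq 0}$ touched by the all-ones ray; by LP duality $\tau^*(Z,S)=\max_{w\in\Delta^{d-1}}\min_{i<j}2w\cdot\alpha_{ij}$. For the upper bound, take $\omega_0$ with $\eta_{e_i}(\omega_0)=0$ for every pendant edge $e_i$, all internal $\eta_e(\omega_0)\neq 0$, and $s_v(\omega_0)\in(-1,1)$ everywhere: then $Z$ is the set of $n$ pendants, $S=\emptyset$, $\cJ(Z,\emptyset)=\langle \eta_{e_i}\eta_{e_j}:i<j\rangle$, and all vertices $2(e_i+e_j)$ of $N(f')$ lie on the hyperplane $\sum_k x_k=4$, so $\tau^*=4/n$ and the touched face is the entire $(n-1)$-simplex they span, giving rlct $=n/4$ and multiplicity $1$. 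For the matching lower bound $\tau^*(Z,S)\leq 4/n$ valid at every $\omega_0$, the LP dual gives the identity
\[
\sum_{i<j} w\cdot\alpha_{ij}=\sum_{e\in Z}w_e\,\big|\{(i,j):e\in E(ij)\}\big|+\sum_{v\in S}w_{t_v}\,\big|\{(i,j):r(ij)=v\}\big|,
\]
and, combined with the counts ($n-1$ for a pendant edge $e$, $ab$ for an internal edge separating $a$ from $b$ leaves, and analogous bounds for $r(ij)=v$) together with the validity of $(Z,S)$, it yields by pigeonhole a pair $(i,j)$ with $w\cdot\alpha_{ij}\leq 2/n$.

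Finally, for the multiplicity $m$ we enumerate all $\omega_0$ attaining ${\rm rlct}=n/4$ and take the largest face-codimension. When $r$ is a leaf, $r\notin S$ means that $t_r$ never appears and the only touched face is the full pendant $(n-1)$-simplex, forcing $m=1$. When $r$ and all its neighbors are inner, any $t_r$ (when $r\in S$) multiplies only generators $m_{ij}$ with $r(ij)=r$, and these sit at least two edges away from every pendant edge; one checks that they cannot reduce the touched-face dimension below $d-1$, again giving $m=1$. In the remaining case ($r$ inner and adjacent to at least one leaf) one can construct $\omega_0$ with $s_r^2=1$ and an asymmetric choice of $Z$ for which the diagonal meets only a lower-dimensional face of the Newton polytope --- as in the quartet rooted at $a$, adjacent to leaves $1,2$, with $Z=\{e_{1a},e_{b3},e_{b4}\}$ and $S=\{a\}$, where an explicit LP computation gives multiplicity $2$ --- producing $m\geq 2$; with no uniform explicit upper bound available, the best general conclusion is $m\geq 1$. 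The principal obstacle in the plan is the uniform lower-bound step $\tau^*(Z,S)\leq 4/n$: the pigeonhole/averaging argument must be carried out carefully across all combinatorially inequivalent valid $(Z,S)$, which is where the trivalent tree structure and the pair-covering constraint on $Z\cup S$ enter nontrivially.
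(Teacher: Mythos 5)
Your high-level architecture matches the paper's (reduce to a monomial ideal at each fiber point, then apply the Newton-polyhedron formula), and your two concrete computations --- the pendant-edge configuration giving $(n/4,1)$ and the rooted quartet configuration with multiplicity at least two --- check out. But the two steps that carry all the difficulty are not executed. The first is the uniform bound ${\rm rlct}_{\omega_{0}}(\cJ)\geq n/4$, i.e.\ $\tau^{*}(Z,S)\leq 4/n$ for every admissible $(Z,S)$. The pigeonhole you propose cannot run on the uniform average over pairs: an internal edge separating $a$ from $n-a$ leaves lies on $a(n-a)>n-1$ paths, so for $w$ concentrated on internal edges the average of $w\cdot\alpha_{ij}$ strictly exceeds $2/n$ and no pair is forced below $2/n$ by averaging. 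You correctly flag this as the principal obstacle, but it is the content of the proof rather than a detail: what is needed is a primal certificate, namely a convex combination $c$ of pairs with $\sum_{ij}c_{ij}\,2\alpha_{ij}\leq \tfrac{4}{n}\mathbf{1}$ coordinatewise. The paper produces it in Construction \ref{def:constr_net} as a recursively built network of $n$ (resp.\ $2n$) leaf-to-leaf paths covering each terminal edge exactly twice (resp.\ four times) and each edge at most twice (resp.\ exactly four times), whose barycenter is the required point; minimality of $4/n$ then follows from the terminal-edge hyperplane $\sum_{e\in E_{0}}x_{e}\geq 4$. (You could also observe that shrinking $(Z,S)$ only deletes coordinates of the $\alpha_{ij}$ and hence only decreases $\tau^{*}$, so the single configuration $Z=E$, $S=$ all inner nodes suffices; the paper achieves the same reduction by lower semicontinuity of the rlct in Proposition \ref{lem:red-2-deepest}.)

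The second gap is the multiplicity. Because the ordering on pairs selects the \emph{largest} multiplicity among points attaining rlct $=n/4$, exhibiting one point of multiplicity one proves nothing; you must rule out multiplicity $\geq 2$ at every such point, and in particular at the deepest singularity, where $Z=E$ and $S$ consists of all inner nodes and the Newton polytope is nothing like the pendant simplex your argument invokes. Your phrase ``one checks that they cannot reduce the touched-face dimension below $d-1$'' is exactly where the paper's Lemmas \ref{lem:structureP} and \ref{lem:multiplicities} and Proposition \ref{lem:structureG} do real work: an inductive facet description of the pair-edge incidence polytope $P_{n}$, the resulting equation/inequality description of $\Gamma(Q_{\delta})$, the verification that the constructed barycenter is interior to $\Gamma(Q_{\delta})$ (so that only facets induced by the whole polytope can contain $\tfrac{4}{n}\mathbf{1}$), and a coefficient-balancing argument showing $F_{0}$ is the unique such facet in the two cases where $m=1$ is claimed. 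None of this machinery, or a substitute for it, appears in your proposal.
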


There is no coincidence in the fact that $\cJ$ here denotes $\sum_{i,j\in [n]}\la\kappa_{ij}(\omega)\ra$ and in Section \ref{sec:splittrees} it denotes $\sum_{I\in[n]_{\geq 2}}\la\kappa_{I}(\omega)-\hat{\kappa}_{I}\ra$. In fact if $\hat{\kappa}_{ij}=0$ for all $i,j\in[n]$ then these two ideals are equal (see the beginning of the proof of Lemma \ref{lem:redzQ}).

The strategy of the proof of Proposition \ref{prop:zerocase} is as follows. First in Section \ref{sec:deepest} we show that the local computations can be restricted to a special subset of $\widehat{\Omega}_{T}$ over which  $\cJ$ can be replaced by a monomial ideal. Then in Section \ref{sec:n-diagram} we present the method to compute real log-canonical threshold of a monomial ideal. We use this method in Section \ref{sec:zeroproof}.

\subsection{The deepest singularity}\label{sec:deepest}

First we note that the ideal $\cJ$ in Proposition \ref{prop:zerocase} depends on $s_{v}$ for $v\in V$ only through the value of $s_{v}^{2}$. It follows that the computations can be reduced only to points satisfying $s_{v}\geq 0$ for all inner nodes $v$ of $T$. Henceforth in this section we always assume this is the case. We define the \textit{deepest singularity} of $\widehat{\Omega}_{T}$ as
\begin{equation}\label{eq:deepest}
\widehat{\Omega}_{{\rm deep}}:=\{\omega\in\widehat{\Omega}_T: \eta_{e}=0 \,\,\mbox{for all } e\in \widehat{E}, \,\,\,s_{v}=1 \,\,\mbox{for all } v\in \widehat{V}\}.
\end{equation} 
We note that since $\hat{\kappa}_{ij}=0$ for all $i,j\in [n]$ then $\widehat{E}=E$ and $\widehat{V}$ is equal to the set of all inner nodes of $T$ and $\widehat{\Omega}_{{\rm deep}}$ is an affine subspace constrained to $\Omega_{T}$. 
\begin{prop}\label{lem:red-2-deepest}
Let $T$ be a tree with $n$ leaves. Let $\hat{p}\in\cM_{T}$ such that $\hat{\kappa}_{ij}=0$ for all $i,j\in [n]$. Then 
\begin{equation}\label{eq:mindeep}
\min_{\omega_{0}\in\widehat{\Omega}_{T}} {\rm RLCT}_{\omega_{0}}( \cJ)=\min_{\omega_{0}\in\widehat{\Omega}_{{\rm deep}}} {\rm RLCT}_{\omega_{0}}( \cJ).
\end{equation}
\end{prop}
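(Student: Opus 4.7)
The inclusion $\widehat{\Omega}_{\rm deep}\subseteq\widehat{\Omega}_T$ gives the $\leq$ direction of (\ref{eq:mindeep}) for free, so the plan is to prove the reverse direction: given $\omega_{0}\in\widehat{\Omega}_T$, exhibit $\omega'\in\widehat{\Omega}_{\rm deep}$ with ${\rm RLCT}_{\omega'}(\cJ)\leq{\rm RLCT}_{\omega_{0}}(\cJ)$. Under the standing hypothesis $\hat{\kappa}_{ij}=0$ for all $i,j\in[n]$ one has $\widehat{E}=E$ and $\widehat{V}$ equal to the set of all inner nodes of $T$; moreover Proposition \ref{lem:param} combined with this hypothesis forces $\hat{\kappa}_I=0$ for every $I\in[n]_{\geq 2}$, since for each such $I$ one can pick $i,j\in I$ with $r(ij)=r(I)$ and then the expression $(1-s_{r(I)}^2)\prod_{e\in E(ij)}\eta_e=4\kappa_{ij}(\omega)$ is a factor of $\kappa_I(\omega)$ and vanishes at every point of $\widehat{\Omega}_T$. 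Hence the point $\omega'$ obtained from $\omega_{0}$ by setting $\eta_e=0$ for every $e\in E$ and $s_v=1$ for every $v\in\widehat{V}$ (and keeping $s_i=1-2\hat{\lambda}_i$ at the leaves) satisfies $\psi_T(\omega')=f_{p\kappa}(\hat{p})$, trivially meets the constraints (\ref{eq:constraints}), and therefore lies in $\widehat{\Omega}_{\rm deep}$.

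Near $\omega_{0}$ I would apply Proposition \ref{lem:szacowanie}(iv) to absorb into units every factor of $\kappa_{ij}(\omega)=\frac{1}{4}(1-s_{r(ij)}^{2})\prod_{e\in E(ij)}\eta_e$ that is bounded away from zero on a small ball around $\omega_{0}$. With $A_0=\{e\in E:\eta_e^{0}=0\}$, $B_0=\{v\in\widehat{V}:s_v^{0}=1\}$, and $a_{ij}=1$ if $r(ij)\in B_0$ and $a_{ij}=0$ otherwise, this replaces $\cJ$ locally by the monomial ideal
$$\cJ_{\omega_{0}}=\la (1-s_{r(ij)})^{a_{ij}}\prod_{e\in E(ij)\cap A_0}\eta_e\;:\;i,j\in[n]\ra.$$
The same reduction at $\omega'$ yields a monomial ideal $\cJ_{\omega'}$ in strictly more active coordinates whose generator indexed by each pair $(i,j)$ is the corresponding generator of $\cJ_{\omega_{0}}$ multiplied by a monomial $q_{ij}$ in the ``newly active'' variables $\{\eta_e:e\in E\setminus A_0\}\cup\{1-s_v:v\in\widehat{V}\setminus B_0\}$. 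The problem thereby reduces to a purely combinatorial comparison of these two monomial ideals.

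The remaining step is to show that enlarging the ambient coordinate set and multiplying each generator of a monomial ideal by a monomial in the new variables never increases the real log-canonical threshold in the order of Definition \ref{def:rlct}. I expect to carry this out with the Newton-diagram description of the RLCT of a monomial ideal developed in Section \ref{sec:n-diagram}: the Newton polytope of $\cJ_{\omega'}$ is obtained from that of $\cJ_{\omega_{0}}$ by translating each generating lattice point along the new coordinate directions, and the ray associated with the uniform prior meets the translated polytope no farther from the origin than it meets the original, giving ${\rm rlct}(\cJ_{\omega'})\leq{\rm rlct}(\cJ_{\omega_{0}})$. The main obstacle is the multiplicity side of the order, since Definition \ref{def:rlct} treats pairs with strictly larger multiplicity as strictly smaller at equal threshold; one must verify that the shift of the polytope does not strictly lower the dimension of the face of first contact, which is a finite combinatorial check that ultimately follows because the new generators retain all of the old monomial factors intact. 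Combining this inequality with the local-global identity (\ref{eq:rlct-min}) produces the desired ${\rm RLCT}_{\omega'}(\cJ)\leq{\rm RLCT}_{\omega_{0}}(\cJ)$ and completes the proof.
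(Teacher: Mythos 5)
Your proof takes a genuinely different route from the paper's. The paper first shows that $\widehat{\Omega}_{T}$ decomposes as a union of affine strata indexed by the minimal pairs $(V_{0},E_{0})$, observes that $\widehat{\Omega}_{\rm deep}$ lies in the closure of every stratum, and then invokes lower semicontinuity of $\omega\mapsto{\rm rlct}_{\omega}(f)$ (citing \cite[Example 9.3.17]{lazarsfeld2004positivity}) together with discreteness of the values to conclude that the minimum is attained on the deepest stratum. You instead make a direct comparison: for each $\omega_{0}\in\widehat{\Omega}_{T}$ you reduce $\cJ$ locally to a monomial ideal $\cJ_{\omega_{0}}$ via Proposition~\ref{lem:szacowanie}(iv), do the same at an explicit point $\omega'\in\widehat{\Omega}_{\rm deep}$, note that the $(i,j)$-generator of $\cJ_{\omega'}$ equals the $(i,j)$-generator of $\cJ_{\omega_{0}}$ multiplied by a monomial in the variables that become newly degenerate, and then argue via Newton polyhedra. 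This is more elementary (no appeal to a semicontinuity theorem), it applies uniformly to all $\omega_{0}\in\widehat{\Omega}_{T}$ without passing through the stratification, and it has the advantage of addressing the multiplicity coordinate of ${\rm RLCT}$ explicitly, which the paper's argument passes over in silence.

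A few things are imprecise or left unfinished and should be tightened. (1) You keep $\cJ_{\omega_{0}}$ and $\cJ_{\omega'}$ in the same ambient space $\R^{n_{v}+n_{e}}$, so the relevant geometric fact is the containment $\Gamma_{+}(f_{\cJ_{\omega'}})\subseteq\Gamma_{+}(f_{\cJ_{\omega_{0}}})$ (each exponent of a generator of $\cJ_{\omega'}$ dominates the corresponding exponent of $\cJ_{\omega_{0}}$ coordinatewise). Your phrase "meets the translated polytope no farther from the origin" has the direction backwards: the diagonal ray meets the smaller polyhedron at a point \emph{farther} from the origin, i.e. $t_{\omega'}\geq t_{\omega_{0}}$, which is what gives ${\rm rlct}(\cJ_{\omega'})=1/t_{\omega'}\leq 1/t_{\omega_{0}}={\rm rlct}(\cJ_{\omega_{0}})$. (2) The multiplicity step is not merely "because the new generators retain all the old monomial factors"; the clean argument is via normal cones: if ${\rm rlct}(\cJ_{\omega'})={\rm rlct}(\cJ_{\omega_{0}})$ and $F_{I}$, $F_{J}$ are the faces containing $t_{0}\mathbf{1}$, then any $\beta$ in the normal cone $N(F_{I})$ is automatically in $N(F_{J})$ because $\Gamma_{J}\subseteq\Gamma_{I}$ forces $\min_{\Gamma_{J}}\langle\cdot,\beta\rangle=\min_{\Gamma_{I}}\langle\cdot,\beta\rangle=\langle t_{0}\mathbf{1},\beta\rangle$, so $N(F_{I})\subseteq N(F_{J})$ and ${\rm codim}\,F_{J}\geq{\rm codim}\,F_{I}$. (3) You should either restrict to $s_{v}\geq 0$ for inner nodes (as the paper does at the start of Section~\ref{sec:zerocase}, using that $\cJ$ depends on $s_{v}$ only through $s_{v}^{2}$) or allow $s_{v}^{0}=-1$ in the definition of $B_{0}$; as written $B_{0}=\{v:s_{v}^{0}=1\}$ silently excludes half of the singular locus. (4) You should note that applying Theorem~\ref{thm:arnold} requires $\R$-nondegeneracy of the sum of squares of monomials, which is what Corollary 5.3 of \cite{shaowei_rlct} supplies; the paper invokes it only after Lemma~\ref{lem:jotplusrest}, and your argument needs it for the $\omega_{0}$-side too. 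With these repairs the argument is complete.
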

\begin{proof}
We build on the proof of Theorem 5.8 in \cite{pwz2010-identifiability}. We first show that $\widehat{\Omega}_{T}$ is a union of affine subspaces constrained to $\Omega_{T}$ with a common intersection given by $\widehat{\Omega}_{{\rm deep}}$. Let $V_{0}\subseteq \widehat{V}$ and $E_{0}\subseteq \widehat{E}$ and
\begin{equation}\label{eq:omega-AB}
\Omega_{(V_0,E_0)} =\{\omega\in \widehat{\Omega}_{T}: \,\,s_{v}=1 \mbox{ for all } v\in V_{0}, \,\,\eta_{u,v}=0\mbox{ for all } (u,v)\in E_{0} \}.
\end{equation}
We say that $(V_0,E_0)$ is \textit{minimal for} $\widehat{\Sigma}$ if for  every point $\omega$ in $\Omega_{(V_0,E_0)}$ and for every $i,j\in [n]$ $\kappa_{ij}(\omega)=0$  and furthermore that $(V_0,E_0)$ is minimal with such a property (with respect to inclusion on both coordinates). We now show that the $\hat{p}$-fiber satisfies 
\begin{equation}\label{eq:p-fiber-decomp}
\widehat{\Omega}_{T}=\bigcup_{(V_0,E_0)\mbox{ min.}} \Omega_{(V_0,E_0)}.
\end{equation}
The first inclusion ``$\subseteq$'' follows from the fact that if $\omega \in \widehat{\Omega}_{T}$ then $\kappa_{ij}(\omega)=\hat{\kappa}_{ij}=0$ for all $i,j\in [n]$. Therefore $\omega\in \Omega_{(V_0,E_0)}$ for some minimal $(V_0,E_0)$. The second inclusion is obvious. 

 Each $\Omega_{(V_0,E_0)}$ is a union of $2^{|V_{0}|}$ an affine subspace in $\R^{|V|+|E|}$, denoted by $M_{(V_0,E_0)}$,  constrained to $\Omega_{T}$. Let $\cS$ denote the intersection lattice of all $M_{(V_{0},E_{0})}$ for  $(V_{0},E_{0})$ minimal (c.f. Section 3.1 in \cite{macpherson}) with ordering denoted by $\leq$. For each $i\in \cS$ let $M^{(i)}$ denote the corresponding intersection and define
\begin{equation}\label{eq:stratification}
S_{i}=M^{(i)}\setminus\bigcup_{j<i} M^{(j)}.
\end{equation}
In this way we obtain an \textit{$\cS$-induced  decomposition} of $\R^{|V|+|E|}$. 
 
By \cite[Example 9.3.17]{lazarsfeld2004positivity} the function $\omega\mapsto {\rm rlct}_{\omega}(f)$ is lower semicontinuous (the argument used there works over the real numbers). This means that for every $\omega_{0}\in \Omega_{T}$ and $\e>0$ there exists a neighborhood $U$ of $\omega_{0}$ such that ${\rm rlct}_{\omega_{0}}(f)\leq {\rm rlct}_{\omega}(f)+\e$ for all $\omega\in U$. Since the set of values of the real log-canonical threshold is discrete this means that for every $\omega_{0}\in\widehat{\Omega}_{T}$ and any sufficiently small neighborhood $W_{0}$ of $\omega_{0}$ one has ${\rm rlct}_{\omega_{0}}(f)\leq {\rm rlct}_{\omega}(f)$  for all $\omega\in W_{0}$. Since for any neighborhood $W_{0}$ of $\omega_{0}\in\widehat{\Omega}_{{\rm deep}}$ we have $W_{0}\cap S_{i}\neq \emptyset$ for all $i\in\cS$ then necessarily the minimum of the real log-canonical threshold is attained for a point from the deepest singularity. 
\end{proof}

Proposition \ref{lem:red-2-deepest} shows that in the singular case we can restrict our analysis to the neighborhood of $\widehat{\Omega}_{{\rm deep}}$. Often however we also consider points in a bigger set
$$\widehat{\Omega}_{0}=\{\omega\in\widehat{\Omega}_{T}:\eta_{u,v}=0\,\,\mbox{for all } (u,v)\in \widehat{E}\}.$$
Note that $\widehat{\Omega}_{{\rm deep}}$ lies on the boundary of $\Omega_{T}$ but $\widehat{\Omega}_{0}$ also contains internal points of $\Omega_{T}$ which will be crucial for some of the arguments later.

\begin{lem}\label{lem:jotplusrest}\label{lem:Jjaksiepatrzy}
Assume that $\hat{p}\in\cM_{T}$ is such that $\hat{\kappa}_{ij}=0$ for all $i,j\in [n]$. Let \mbox{$\cJ=\sum_{i,j\in[n]}\la\kappa_{ij}(\omega)\ra$} and $\cJ(\omega_{0})$ be the ideal $\cJ$ translated to the origin. Then for every $\omega_{0}\in \widehat{\Omega}_{0}$
\begin{equation}\label{eq:rlct-jj}
{\rm RLCT}_{0}(\cJ({\omega_{0}}))={\rm RLCT}_{0}(\cJ'),
\end{equation}
 where  $\cJ'$ is a monomial ideal such that each $\kappa_{ij}(\omega+\omega_{0})$ in the set of generators of $\cJ({\omega_{0}})$ is replaced either by
 $$
 \begin{array}{ll}
 s_{r(ij)}\prod_{(u,v)\in E(ij)}\eta_{u,v} & \mbox{if } s_{r(ij)}^{0}= 1, \mbox{ or by}\\
\prod_{(u,v)\in E(ij)}\eta_{u,v}&\mbox{if } s_{r(ij)}^{0}\neq 1.
\end{array}
 $$
\end{lem}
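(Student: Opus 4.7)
The plan is to reduce the whole statement to a single invocation of Proposition~\ref{lem:szacowanie}(iv), which guarantees that replacing each generator of an ideal by itself divided by an analytic unit (a factor bounded between two positive constants on the relevant neighborhood) preserves the RLCT. So the job is simply to massage each translated generator $\kappa_{ij}(\omega+\omega_0)$ into the asserted monomial times a unit.

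First I would specialize Proposition~\ref{lem:param} to $|I|=2$. Every inner node on the path between $i$ and $j$ has degree exactly two in $T(ij)$, so the middle factor $\prod_{v\in V(I)\setminus I} s_v^{\deg v-2}$ collapses to $1$, leaving
$$
\kappa_{ij}(\omega)\;=\;\tfrac{1}{4}\bigl(1-s_{r(ij)}^{2}\bigr)\prod_{(u,v)\in E(ij)}\eta_{u,v}.
$$
Since we are in the section-wide setting $\hat{\kappa}_{ij}=0$ for all $i,j$, we have $\widehat{E}=E$; hence $\omega_{0}\in \widehat{\Omega}_{0}$ forces $\eta_{u,v}^{0}=0$ for every edge. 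The shift $\omega\mapsto\omega+\omega_{0}$ therefore leaves each $\eta$-coordinate unchanged, so the only nontrivial effect of the translation is on the factor $1-s_{r(ij)}^{2}$.

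The core step is then a short case split on $s_{r(ij)}^{0}$. If $s_{r(ij)}^{0}=1$, which forces $r(ij)$ to be an inner node because (A2) yields $s_{i}^{0}=1-2\hat{\lambda}_{i}\in(-1,1)$ at every leaf, one computes $1-(s_{r(ij)}+1)^{2}=-s_{r(ij)}(s_{r(ij)}+2)$, and on a sufficiently small neighborhood of the origin the factor $(s_{r(ij)}+2)$ is an analytic unit, so stripping it off leaves the monomial $s_{r(ij)}\prod_{(u,v)\in E(ij)}\eta_{u,v}$. Otherwise $s_{r(ij)}^{0}\neq 1$, and combined with the section-wide normalization $s_{v}\geq 0$ at inner nodes (and (A2) at leaves) one obtains $|s_{r(ij)}^{0}|<1$, whence $1-(s_{r(ij)}+s_{r(ij)}^{0})^{2}$ is a strictly positive analytic unit near the origin, and only $\prod_{(u,v)\in E(ij)}\eta_{u,v}$ remains. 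These are precisely the two generator shapes appearing in $\cJ'$.

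The one mild technicality, and the place I would be careful, is that in the $s_{r(ij)}^{0}=1$ branch the unit factor $-\tfrac{1}{4}(s_{r(ij)}+2)$ is negative, while Proposition~\ref{lem:szacowanie}(iv) as stated requires positive units. This is harmless: an ideal is invariant under a sign change of any generator ($\la f\ra=\la -f\ra$), so before invoking the proposition one replaces $\kappa_{ij}$ by $-\kappa_{ij}$ in the generating set. Applying Proposition~\ref{lem:szacowanie}(iv) on a sufficiently small $\e$-box around the origin then yields ${\rm RLCT}_{0}(\cJ(\omega_{0}))={\rm RLCT}_{0}(\cJ')$. No deeper obstacle is anticipated, as everything reduces to a one-line computation for each edge case together with a single application of a result already established in the paper.
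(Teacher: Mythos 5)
Your proposal is correct and follows essentially the same route as the paper: specialize Proposition \ref{lem:param} to pairs, observe that $\eta^{0}_{e}=0$ so the translation only affects the factor $1-s_{r(ij)}^{2}$, split on $s_{r(ij)}^{0}=1$ versus $s_{r(ij)}^{0}\neq 1$, and strip off the resulting unit via Proposition \ref{lem:szacowanie}(iv). Your extra remarks on the sign of the unit and on why $s_{r(ij)}^{0}\neq 1$ yields $|s_{r(ij)}^{0}|<1$ are points the paper leaves implicit, but they do not change the argument.
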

\begin{proof}
Let $i,j\in [n]$ and assume that $\omega_{0}\in \widehat{\Omega}_{0}$ then by Proposition \ref{lem:param}
$$
\kappa_{ij}(\omega+\omega_{0})=\frac{1}{4}(1-(s_{r(ij)}+s^{0}_{r(ij)})^{2})\prod_{e\in E(ij)}\eta_{e}.
$$
If $s_{r(ij)}^{0}\neq 1$ for a sufficiently small $\e>0$ there exist positive constants $c(\e)$, $C(\e)$ such that  $c(\e)<1-(s_{r(ij)}+s_{r(ij)}^{0})^{2}<C(\e)$ for $s_{r(ij)}\in (-\e,\e)$. Therefore by Proposition \ref{lem:szacowanie} (iv) we can replace this term in (\ref{eq:rlct-jj}) with $1$. If $s_{r(ij)}^{0}=1$ rewrite $1-(1+s_{r(ij)}^{2})^{2}$ as $-s_{r(ij)}(2+s_{r(ij)})$. For a sufficiently small $\e$ we can find two positive constants $c(\e),C(\e)$ such that $c<2+s_{r(ij)}<C$ whenever $s_{r(ij)}\in (-\e,\e)$. Again by Proposition \ref{lem:szacowanie} (iv) we can replace this term with $1$. This proves equation (\ref{eq:rlct-jj}).
\end{proof}
Since $\cJ'$ is a monomial ideal then by Corollary 5.3 in \cite{shaowei_rlct} we can compute ${\rm RLCT}_{{0}}(\cJ' )$ using the method of Newton diagrams. We present this method in the following subsection.

\subsection{Newton diagram method}\label{sec:n-diagram} Given an analytic function $f\in\cA_{0}(\R^{d})$ we pick local coordinates $x=(x_{1},\ldots, x_{d})$ in a neighborhood of the origin. This allows us to represent $f$ as a power series in $x_{1},\ldots, x_{d}$ such that $f(x)=\sum_{\a} c_{\a} x^{\a}$. The exponents of terms of the polynomial $f$ are vectors in $\N^{d}$. The \textit{Newton polyhedron} of $f$ denoted by $\Gamma_+(f)$ is the convex hull of the subset
 $$
 \{\a+\a':\, c_{\a}\neq 0, \a'\in\R_{\geq 0}^{d}\}.
 $$
A subset $\gamma\subset \Gamma_{+}(f)$ is a \textit{face} of $\Gamma_{+}(f)$ if there exists $\beta\in\R^{d}$ such that
$$
\gamma=\{\a\in\Gamma_{+}(f):\, \la\a,\beta\ra\leq \la\a',\beta\ra\,\mbox{ for all } \a'\in\Gamma_{+}(f)\}.
$$
If $\gamma$ is a subset of $\Gamma_{+}(f)$ then we define $f_{\gamma}(x)=\sum_{\a\in \gamma\cap\N^{d}} c_{\a} x^{\a}$. The \textit{principal part} of $f$ is by definition the sum of all terms of $f$ supported on all compact faces of $\Gamma_{+}(f)$.
\begin{exmp}
Let $f(x,y)=x^{3}+2xy+6x^{2}y+3x^{4}y+y^{2}$. Then the Newton diagram looks as follows
\begin{center}
\includegraphics{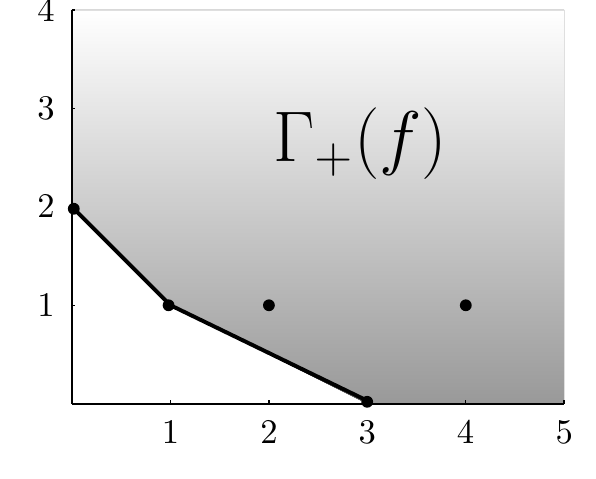}
\end{center}
where the dots correspond to the terms of $f$. There are only two bounded facets of $\Gamma_{+}(f)$ and the principal part of $f$ is equal to $f(x,y)=x^{3}+xy+y^{2}$.
\end{exmp}
\begin{defn}\label{def:nondegenerate}
The principal part of the power series $f$ with real coefficients  is $\R$-\textit{nondegenerate} if for all compact faces $\gamma$ of $\Gamma_+(f)$
\begin{equation}\label{eq:non-deg}
\left\{x\in\R^{n}:\,\frac{\partial f_\gamma}{\partial x_{1}}(x)=\cdots=\frac{\partial f_\gamma}{\partial x_{n}}(x)=0\right\}\subseteq \left\{\omega\in\R^{n}:x_{1}\cdots x_{n}=0\right\}.
\end{equation}
From the geometric point of view this condition means that the singular locus of the hypersurface defined by $f_{\gamma}(x)=0$  lies outside of $(\R^{*})^{n}$ for all compact faces $\gamma$ of $\Gamma_{+}(f)$.
\end{defn}

The following theorem shows that if the principal part of $f$ is $\R$-nondegenerate and $f\in\cA^{\geq}_{\Theta}$ it greatly facilitates the computations in Theorem \ref{th:watanabe}. An example of an application of these methods in statistical analysis can be found in \cite{watanabe_newton}. 
\begin{thm}[Theorem 5.6, \cite{shaowei_rlct}]\label{thm:arnold}
Let  $f\in\cA^{\geq}_{{0}}(\R^{d})$ and $f({0})=0$. If the principal part of $f$ is $\R$-nondegenerate then ${\rm RLCT}_{{0}}(f)=(\frac{1}{t},c)$  where $t$ is the smallest number such that the vector $(t,\ldots,t)$ hits the polyhedron $\Gamma_+({f})$ and $c$ is the codimension of the face it hits.
\end{thm}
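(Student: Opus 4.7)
The plan is to prove the theorem through a toric resolution of singularities whose combinatorial skeleton is dictated by $\Gamma_+(f)$, and then to assemble the local monomial computations of Example \ref{ex:monomial} via the localization formula (\ref{eq:rlct-min}). The role of the $\R$-nondegeneracy hypothesis is precisely to guarantee that this toric resolution is already adapted to $f$ in the sense of Hironaka's Theorem \ref{th:hironaka}, so that no further blow-ups are needed on the individual charts.

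First, I would construct a regular simplicial fan $\Sigma$ in $\R_{\geq 0}^d$ refining the normal fan of $\Gamma_+(f)$; this is standard toric geometry (iterated stellar subdivisions along non-regular cones produce such a refinement). The associated proper birational map $\pi\colon X_\Sigma\to\R^d$ is an isomorphism off the coordinate hyperplanes. On the chart $U_\sigma$ associated to a maximal cone $\sigma$ with primitive generators $v_1,\ldots,v_d$ one has $x_i=\prod_j u_j^{(v_j)_i}$, so
$$
f(\pi(u)) = u^{m(\sigma)}\, g_\sigma(u), \qquad |\pi'(u)| = u^{h(\sigma)},
$$
where $m_j(\sigma)=\min\{\la v_j,\alpha\ra : c_\alpha\neq 0\}$ is attained on a face $\gamma_j$ of $\Gamma_+(f)$ and $h_j(\sigma)=|v_j|-1$. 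The key analytic step is to show that $g_\sigma$ is nowhere vanishing on $\pi^{-1}(0)\cap U_\sigma$. Restricting $g_\sigma$ to a stratum $\{u_j=0: j\in J\}$ produces a Laurent polynomial in the remaining $u_\ell$, whose vanishing locus in the positive orthant would correspond to a critical point of $f_\gamma$ inside $(\R^*)^d$ for $\gamma=\bigcap_{j\in J}\gamma_j$. Condition (\ref{eq:non-deg}) on each compact face rules this out, so $g_\sigma$ is a unit near every point of $\pi^{-1}(0)\cap U_\sigma$ and Hironaka's conclusion holds verbatim.

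Second, by Example \ref{ex:monomial} each chart contributes poles of $\zeta(z)$ at $z_j=(1+h_j)/r_j = |v_j|/m(v_j)$, where $m(v):=\min_{\alpha\in\Gamma_+(f)}\la v,\alpha\ra$. Taking the minimum over all rays of $\Sigma$ and (since the RLCT is independent of the refinement) over all primitive $v\in\N^d\setminus\{0\}$ yields
$$
{\rm rlct}_0(f) = \min_{v\in\N^d\setminus\{0\}}\frac{|v|}{m(v)} = \frac{1}{\max_{v\in\Delta_{d-1}}\min_{\alpha\in \Gamma_+(f)}\la v,\alpha\ra},
$$
where $\Delta_{d-1}=\{v\in\R^d_{\geq 0}: |v|=1\}$. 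The bilinear form $\la v,\alpha\ra$ on the convex compact sets $\Delta_{d-1}$ and a bounded truncation of $\Gamma_+(f)$ satisfies the hypotheses of the minimax theorem, which converts the right-hand side into $1/\min_{\alpha\in\Gamma_+(f)}\max_j\alpha_j = 1/t$, exactly the diagonal threshold appearing in the statement. The multiplicity $c$ is read off as the number of linearly independent directions $v$ at which the maximum is attained, which by convex duality equals the codimension of the face of $\Gamma_+(f)$ hit by $(t,\ldots,t)$.

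The main obstacle is the analytic step in paragraph two: translating the geometric nondegeneracy condition (\ref{eq:non-deg}) into the precise statement that each $g_\sigma$ is a unit near every point of $\pi^{-1}(0)\cap U_\sigma$. This requires a careful stratification of each chart by coordinate flats $\{u_j=0\}$ and an inductive argument on $|J|$ together with an identification of the restricted $g_\sigma$ with a toric pullback of $f_\gamma$; this is the technical heart of Varchenko's original argument and cannot be bypassed without losing the nondegeneracy input.
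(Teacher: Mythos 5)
The paper itself gives no proof of this statement: it is quoted verbatim as Theorem~5.6 of the reference \cite{shaowei_rlct}, which in turn adapts the Arnold--Gusein-Zade--Varchenko Newton-polyhedron method to the real, nonnegative setting. So there is no ``paper's own proof'' to compare against; your sketch should be read as a reconstruction of Lin's argument, and on that footing it is the right route and is largely sound. Two points deserve sharpening.

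First, the claim that each $g_\sigma$ is a \emph{unit} on $\pi^{-1}(0)\cap U_\sigma$ --- not merely that $f\circ\pi$ is normal crossings --- uses the hypothesis $f\geq 0$ in an essential way that your sketch leaves implicit. The nondegeneracy condition (\ref{eq:non-deg}) only rules out critical points of $f_\gamma$ in $(\R^*)^d$; by itself it permits $f_\gamma$ to have smooth zeros in the torus, which would make $g_\sigma$ vanish on the exceptional divisor. The missing step is: since $f\geq 0$ near the origin, a scaling-limit argument along each one-parameter subgroup shows $f_\gamma\geq 0$ on $(\R^*)^d$ for every compact face $\gamma$, so any zero of $f_\gamma$ in the torus is automatically a minimum and hence a critical point, which (\ref{eq:non-deg}) forbids. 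Thus $f_\gamma>0$ on $(\R^*)^d$ and the units $g_\sigma$ really are units. Without the sign hypothesis on $f$ the conclusion is false in this strong form, and this is precisely where the real nonnegative case diverges from the complex lct story.

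Second, the multiplicity step is stated too loosely. The local computation on each chart $\sigma$ gives ${\rm mult}$ equal to the number of rays $v_j$ of $\sigma$ achieving $|v_j|/m(v_j)={\rm rlct}_0(f)$, and one must then take the maximum over charts. Your appeal to ``convex duality'' is hiding a genuine argument: one has to show that (i) there exists a chart $\sigma$ containing $c$ rays whose supporting faces $\gamma_j$ all contain $t\mathbf{1}$, and (ii) no chart can contain more than $c$ such rays, where $c=\codim F$ and $F$ is the face of $\Gamma_+(f)$ with $t\mathbf{1}$ in its relative interior. Item~(i) follows because the rays dual to $F$ span a cone of dimension $c$ in the normal fan of $\Gamma_+(f)$ and the refinement $\Sigma$ contains a maximal cone refining it; item~(ii) follows because $c+1$ independent achieving rays would cut out a face of codimension $c+1$ still containing $t\mathbf{1}$, contradicting the choice of $F$. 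These are short arguments, but they are not automatic and should be stated rather than invoked under a blanket duality label. With those two gaps filled, your proof matches the intended one.
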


Let now $f\in \cA^{\geq}_{\theta_{0}}$ such that $f(\theta_{0})=0$. We can then center $f$ at $\theta_{0}$ obtaining a function in $\cA_{0}^{\geq}$. If $f$ is nonnegative then we can use Theorem \ref{thm:arnold} to compute ${\rm RLCT}_{\theta_{0}}(f)$. Note that this theorem in general will not give us ${\rm RLCT}_{\Theta_{0}}(f)$ if ${0}$ is a boundary point of $\Theta$. For a discussion see \cite[Section 8.3.4]{arnold1985singularities} and Example 2.7 in \cite{shaowei_rlct}.

\subsection{Proof of Proposition \ref{prop:zerocase}}\label{sec:zeroproof}
Let $n\geq 4$.  For each $\omega_{0}\in \widehat{\Omega}_{0}$ let $\delta=\delta(\omega_{0})\in\{0,1\}^{V}$ denote the indicator vector satisfying   $\delta_{v}=1$ if $s_{v}^{0}=1$ and $\delta_{v}=0$ otherwise. In particular $\delta_{i}=0$ for all $i=1,\ldots, n$ because the leaves are assumed to be non-degenerate. Let $\cV_{\delta}=\R^{n_{e}+|\delta|}=\R^{|\delta|}\times \R^{n_{e}}$ be a real space with variables representing the edges $(x_{e})_{e\in E}$ and nodes $(y_{v})$ for all $v$ such that $\delta_{v}=1$. With some arbitrary numbering of the nodes and edges we order the variables as follows: $y_1\prec\cdots\prec y_{|\delta|}\prec x_1\prec\cdots \prec x_{n_{e}}$. In Lemma \ref{lem:jotplusrest} for each $\omega_{0}\in \widehat{\Omega}_{0}$ we reduced our computations to the analysis of ${\rm RLCT}_{0}(\cJ')$ where $\cJ'$ has a simple monomial form. Let $Q_{\delta}$ be a polynomial on $\Omega_{T}$ defined as a sum of squares of generators of $\cJ'$. In particular ${\rm RLCT}_{0}(\cJ')={\rm RLCT}_{0}(Q_{\delta})$. The exponents of terms of the polynomial $Q_{\delta} (\omega)$ are vectors in $\{0,2\}^{n_{e}+|\delta|}$. We have that
\begin{equation}\label{eq:Q1}
Q_{\delta} (\omega)=\sum_{i,j\in[n]}s_{r(ij)}^{2\delta_{r(ij)}}\prod_{(u,v)\in E(ij)} \eta_{u,v}^{2}.
\end{equation}

 If $f$ is a polynomial then the convex hull of the exponents of the terms in the sum is called the \textit{Newton polytope} and denoted $\Gamma(f)$. Since each term of $Q_{\delta}$ corresponds to a path between two leaves then the construction of the Newton polytope $\Gamma(Q_{\delta} )\subset\cV_{\delta}$ gives a direct relationship between paths in $T$ and points generating the polytope. Convex combinations of points corresponding to paths give rise to points in the polytope. Let $E_{0}\subseteq E$ be the subset of edges of $T$ such that one of the ends is in the set of leaves of $T$. We call these edges \textit{terminal}. Note that each point generating $\Gamma(Q_{\delta} )$ satisfies $\sum_{e\in E_{0}} x_{e}=4$. This follows from the fact that each of these points corresponds to a path between two leaves in $T$ and every such a path need to cross exactly two terminal edges. Consequently each point of $\Gamma(Q_{\delta} )$ needs to satisfy this equation as well. The induced facet of the Newton polyhedron $\Gamma_{+}(Q_{\delta} )$ is given as   
\begin{equation}\label{eq:F0}
F_{0}=\{(\mathbf{y},\mathbf{x})\in \Gamma_{+}(Q_{\delta} ): \sum_{e\in E_{0}} x_{e}= 4\}
\end{equation}
and each point of $\Gamma_{+}(Q_{\delta} )$ satisfies $\sum_{e\in E_{0}} x_{e}\geq 4$.

The following lemma proves a part of Proposition \ref{prop:zerocase}.
\begin{lem}[The real log-canonical threshold of $\cI$]\label{lem:lambda}
Let $T$ be a trivalent tree with $n$ leaves where $n\neq 3$. If  $\omega_{0}\in \widehat{\Omega}_{0}$ then
${\rm rclt}_{{0}}(\cJ')=\frac{n}{4}$. \end{lem}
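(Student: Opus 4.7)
The plan is to apply Theorem~\ref{thm:arnold} to the polynomial $Q_\delta$ of (\ref{eq:Q1}), since by (\ref{eq:idealRLCT}) one has ${\rm rlct}_0(\cJ')={\rm rlct}_0(Q_\delta)$. Every term of $Q_\delta$ is a distinct monomial with positive coefficient and all exponents even, so on each compact face $\gamma$ of $\Gamma_+(Q_\delta)$ the principal part $(Q_\delta)_\gamma$ is strictly positive on $(\R^*)^{n_e+|\delta|}$; its gradient therefore cannot simultaneously vanish there, and the principal part is $\R$-nondegenerate. Hence Theorem~\ref{thm:arnold} gives ${\rm rlct}_0(Q_\delta)=1/t^\star$, where $t^\star$ is the smallest $t>0$ with $(t,\dots,t)\in\Gamma_+(Q_\delta)$.

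The upper bound ${\rm rlct}_0(\cJ')\leq n/4$ follows immediately from the facet $F_0$ of (\ref{eq:F0}): every leaf-to-leaf path crosses exactly two terminal edges, so after squaring every support point of $Q_\delta$ satisfies $\sum_{e\in E_0}\alpha_e=4$. Thus $\Gamma_+(Q_\delta)\subseteq\{x:\sum_{e\in E_0}x_e\geq 4\}$, and membership of the diagonal ray forces $nt\geq 4$, i.e.\ $t^\star\geq 4/n$.

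For the reverse bound $t^\star\leq 4/n$ I must exhibit $(4/n,\dots,4/n)\in\Gamma_+(Q_\delta)$, i.e.\ nonnegative weights $w_{ij}$ on leaf-pairs summing to $1$ such that the weighted average $q=\sum w_{ij}\alpha'_{ij}$ (with $\alpha'_{ij}$ the doubled exponent of the $(i,j)$-monomial in $Q_\delta$) satisfies $q_k\leq 4/n$ coordinatewise. Unpacking this reads: (a)~each leaf is an endpoint of paths of total weight $2/n$, (b)~each internal edge of $T$ is crossed by paths of total weight at most $2/n$, and (c)~each degenerate inner vertex $v$ is the root $r(ij)$ of paths of total weight at most $2/n$. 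For $n$ even I construct such weights from a \emph{compatible matching} of the leaves of $T$---a perfect matching $M$ in which no internal edge is crossed by more than one matched pair---assigning uniform weight $2/n$ to each of the $n/2$ pairs. Compatible matchings exist by induction on $n$ via \emph{cherry reduction}: any trivalent tree with $n\geq 4$ leaves has a cherry, which I match; smoothing out the cherry's parent produces a trivalent tree with $n-2$ leaves to which the induction hypothesis applies. Condition (c) is then automatic, since any matched pair with $r(ij)=v$ uses both edges incident to $v$ and compatibility allows at most one such pair at each $v$. For $n$ odd one leaf $l$ is left unmatched, and I supplement the near-matching by spoke-paths $(l,i)$ with weights chosen to restore leaf balance---matched pairs receive $\tfrac{2(n-2)}{n(n-1)}$ and spokes receive $\tfrac{2}{n(n-1)}$, as in the $n=5$ calculation worked out earlier in the section---selecting $l$ inside a cherry so that the spokes' contributions to the internal edges remain within the $2/n$ budget; if no single near-matching works, I average over the $n$ near-matchings obtained by leaving each leaf unmatched in turn, which restores all bounds by symmetry.

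\textbf{Main obstacle.} The crux is the combinatorial construction in the lower-bound step. Cherry induction handles $n$ even cleanly, but verifying the internal-edge and $y_v$ bounds for $n$ odd is delicate and is where the main technical work lies; the $\R$-nondegeneracy check and the facet argument are routine by comparison.
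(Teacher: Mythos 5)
Your overall strategy coincides with the paper's: reduce to the monomial polynomial $Q_\delta$ of (\ref{eq:Q1}), get the lower bound $t^\star\geq 4/n$ from the facet $F_0$ of (\ref{eq:F0}), and get the upper bound by exhibiting a convex combination of path-exponent vectors dominated coordinatewise by $\tfrac{4}{n}\mathbf{1}$. Your even-$n$ construction via compatible perfect matchings is a legitimate variant of the paper's Construction \ref{def:constr_net} (for $n=4$ it produces exactly the paper's barycenter $(1,1;1,1,0,1,1)$), and your nondegeneracy remark is essentially fine, though the paper instead invokes the monomial-ideal result (Corollary 5.3 of \cite{shaowei_rlct}) and so never needs it.

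The genuine gap is the odd-$n$ construction. Your primary prescription (one near-matching plus spokes from the unmatched leaf $l$, with $l$ chosen in a cherry) fails already for the $5$-leaf caterpillar with inner nodes $a-b-c$, cherries $\{1,2\}$ at $a$ and $\{4,5\}$ at $c$, and leaf $3$ at $b$: taking $l=1$ forces the compatible matching $\{23,45\}$, and the internal edge $(a,b)$ then receives weight $\tfrac{3}{10}$ from the pair $23$ plus $\tfrac{3}{10}$ from the spokes $13,14,15$, totalling $\tfrac{3}{5}>\tfrac{2}{5}=\tfrac{2}{n}$. (Here the working choice is $l=3$, the leaf \emph{not} in a cherry, contrary to your prescription; and even then the $y_b$ budget is met with equality, so the vertex condition (c) is just as delicate as the edge condition.) Your fallback of averaging the $n$ near-matchings ``by symmetry'' is not a proof: for an internal edge separating the leaves into sets of sizes $a,b$ the averaged bound amounts to $(n-2)k_e+2ab\leq n(n-1)$, where $k_e$ counts the configurations whose matched pair crosses $e$, and for a balanced edge this requires $k_e\lesssim n/2$, which your construction does not control. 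The paper avoids the parity issue entirely: Construction \ref{def:constr_net} does not use matchings but networks of $n$ (resp.\ $2n$) paths with repeated paths and shared endpoints, grown from a quartet core by extending cherry by cherry, so that every terminal edge is covered exactly twice and every edge at most twice (resp.\ exactly four times) for every $n\geq 4$; the coordinatewise bound on the barycenter is then automatic. You would need either to adopt such a non-matching network for odd $n$ or to supply a genuine proof that some averaged family of near-matchings meets all three budgets. Finally, note the lemma also covers $n=2$, where your facet argument breaks down (a path in a two-leaf tree crosses only one edge, so $\sum_{e\in E_0}x_e=2$, not $4$); this case must be treated separately, as the paper does.
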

\begin{proof}
If $n=2$ then since $s_{1}^{0},s_{2}^{0}\neq 1$ by Lemma \ref{lem:Jjaksiepatrzy} we have that
$${\rm RLCT}_{\omega_{0}}(\cJ)={\rm RLCT}_{\omega_{0}}(\la \kappa_{12}(\omega)\ra)={\rm RLCT}_{0}(\eta_{12}^{2})=(\frac{1}{2},1).$$
Therefore Proposition \ref{prop:zerocase} holds in this case. Now assume that $n\geq 4$. By Theorem \ref{thm:arnold} we have to show that $t=4/n$ is the smallest $t$ such that the vector $(t,\ldots, t)$ hits $\Gamma_{+}(Q_{\delta} )$. To show that $4/n\mathbf{1}\in \Gamma_{+}(Q_{\delta} )$ we construct a point $q\in \Gamma(Q_{\delta} )$ such that $q\leq 4/n\mathbf{1}$ coordinatewise. The point is constructed as follows.  
\begin{const}\label{def:constr_net} Let $T=(V,E)$ be a trivalent rooted tree with $n\geq 4$ leaves. We present two constructions of networks of paths between the leaves of $T$. 

The first construction is for the case when $\delta_{r}=1$. In this case in particular $T$ is rooted in an inner node. If $n=4$ then the network consists of the two  paths within cherries counted with multiplicity two. 
\begin{center}
\includegraphics[scale=0.7]{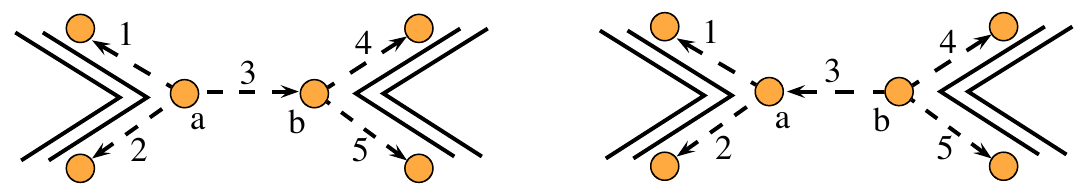}
\end{center}
Each of the paths corresponds to a point in $\Gamma(Q_{\delta} )$. We order the coordinates of $\cV_{\delta}=\R^{5+|\delta|}$ by $y_{a}\prec y_{b}\prec x_{1}\prec \cdots\prec x_{5}$ where $y_{a},y_{b}$ are included only if $\delta_{a},\delta_{b}=1$. For example the point corresponding to the path involving edges $e_{1}$ and $e_{2}$ is $(2, 0; 2, 2, 0 , 0 , 0)$. The barycenter of the points corresponding to all the four paths in the network is $(1,1;1,1,0,1,1)$ both if $T$ is rooted in $a$ or $b$. 

If $n> 4$ then we build the network recursively. Assume that $T$ is rooted in an inner node $a$ and pick an inner edge $(a,b)$. Label the edges incident with $a$ and $b$ as for the quartet tree above and consider the subtree given by the quartet tree. Draw four paths as on the picture above. Let $v$ be any leaf of the quartet subtree which is not a leaf of $T$ and label the two additional edges incident with $v$ by $e_{6}$ and $e_{7}$. Now we extend the network by adding $e_{6}$ to one of the paths terminating in $v$ and $e_{7}$ to the other. Next we add an additional path involving only $e_{6}$ and $e_{7}$ like on the picture below. By construction $v$ is the root of the additional path. We extend the network cherry by cherry until it covers all terminal edges. 
\begin{center}
\includegraphics[scale=0.7]{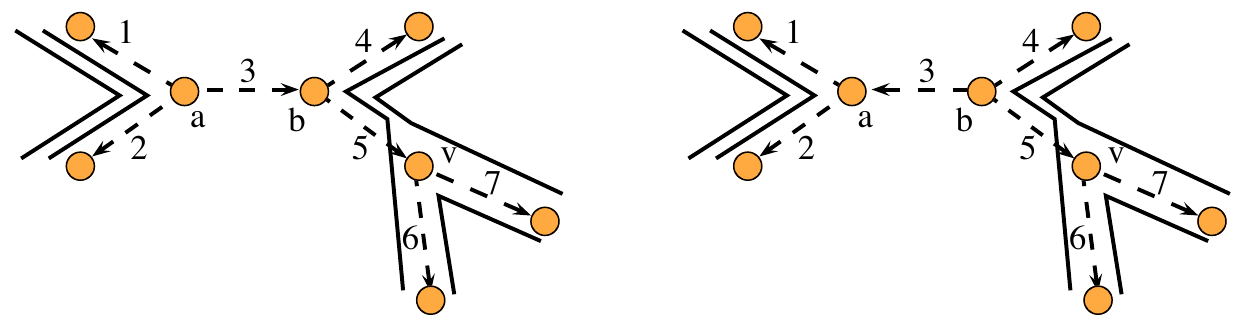}
\end{center}

Note that we have made some choices building up the network and hence the construction is not unique. However, each of the inner nodes  is always a root of at least one and at most two paths. Moreover, each edge is covered at most twice and each terminating edge is covered exactly two times. We have $n$ paths in the network, all representing points of $\Gamma({Q}_{\delta})$ denoted by $q_{1},\ldots, q_{n}$. Let $q=\frac{1}{n}\sum_{i=1}^{n} q_{i}$ then $q\in\Gamma(Q_{\delta} )$ is given by $x_{ab}=0$, $x_{e}=4/n$ for all $e\in E\setminus (a,b)$. The other coordinates by construction satisfy $y_{a}=4/n$, $y_{b}=4/n$ if $\delta_{b}=1$, and $y_{v}= 2/n$ for all $v\in V\setminus\{a,b\}$ such that $\delta_{v}=1$. 

If $\delta_{r}=0$ then we proceed as follows. For $n=4$ consider a network of all the possible paths all counted with multiplicity one apart from the cherry paths (paths of length two) counted with multiplicity two. This makes eight paths and each edge is covered exactly four times. With the order of the coordinates as above the coordinates of the point representing the barycenter of all paths in the network satisfy $x_{e}=1$ for all $e\in E$ and $y_{v}=1/2$ for all $v$ such that $\delta_{v}=1$. This construction generalizes recursively in a similar way as the one for $T$ rooted in an inner node. We always have $2n$ paths and each edge is covered exactly four times. The network induces a point ${q}\in\Gamma(Q_{\delta} )$ with coordinates given by $y_{v}=2/n$ for all $v\in V$ such that $\delta_{v}=1$ and $x_{e}=4/n$ for $e\in E$. (This finishes the construction.)
\end{const} 
The point  $4/n \,\mathbf{1}$ lies in $\Gamma_{+}(Q_{\delta} )$. This follows from Construction \ref{def:constr_net} and the fact that the constructed point $q\in \Gamma(Q_{\delta} )$ satisfies $q\leq \frac{4}{n}\,\mathbf{1}$. Moreover, for any $s<4/n$ the point $s (1,\ldots,1)$ does not satisfy $\sum_{e\in E_{0}} x_{e}\geq 4$ and hence it cannot be in $\Gamma_{+}(Q_{\delta} )$. It follows that $4/n$ is the smallest $t$ such that $t\mathbf{1}\in \Gamma_{+}(Q_{\delta})$ and therefore ${\rm rlct}_{0}(\cJ')=n/4$. We note that the result does not depend on $\delta$. 

\end{proof}


To compute the multiplicity  of the real log-canonical threshold of $Q_{\delta}$ we have to get a better understanding of the polyhedron $\Gamma_{+}(Q_{\delta} )$. According to Theorem \ref{thm:arnold} we need to find the codimension of the face of $\Gamma_{+}(Q_{\delta} )$ hit by $4/n\mathbf{1}$. First we find the hyperplane representation of the Newton polytope $\Gamma(Q_{\delta} )$ reducing the problem to a simpler but equivalent one. 

\begin{defn}[A pair-edge incidence polytope]
Let $T=(V,E)$ be a trivalent tree with $n\geq 4$ leaves. We define a polytope $P_{n}\subset\R^{n_{e}}$, where ${n_{e}}=2n-3$, as a convex combination of points $(q_{ij})_{i,j\in [n]}$ where $k$-th coordinate of $q_{ij}$ is one if the $k$-th edge is in the path between $i$ and $j$ and there is zero otherwise. We call $P_{n}$ a \textit{pair-edge incidence polytope} by analogy to the pair-edge incidence matrix defined by Mihaescu and Pachter \cite[Definition 1]{mihaescu2008combinatorics}. 
\end{defn}

The reason to study the pair-edge incidence polytope is that its structure can be handled easily and this can be shown to be affinely equivalent to $\Gamma(Q_{\delta} )$. This is immediate if $\delta=(0,\ldots,0)$ since $Q_{\textbf{0}}=2P_{n}$. For an arbitrary $\delta$ fix a rooting $r$ of $T$ and define a linear map $f_{r}:\R^{n_{e}}\rightarrow \R^{|\delta|}$ as follows. For each $v\in V\setminus r$ such that $\delta_{v}=1$ set $$y_{v}=1/2(x_{v\ch_{1}(v)}+x_{v\ch_{2}(v)}-x_{\pa(v)v}),$$ where  $\ch_{1}(v)$, $\ch_{2}(v)$ denotes the two children of $v$. If $\delta_{r}=1$ then set 
$$y_{r}=1/2(x_{r\ch_{1}(r)}+x_{r\ch_{2}(r)}+x_{r\ch_{3}(r)}).$$ It can be easily checked that for a map $(\id\times f_{r}):\R^{n_{e}}\rightarrow\R^{n_{e}}\times \R^{|\delta|}$ one has  $(\id\times f_{r})(2P_{n})=\Gamma(Q_{\delta} )$. This follows from the fact that for each point $y_{r}=2$ if and only if the path crosses $r$ and for any other node $y_{v}=2$ if and only if the path crosses $v$ and $v$ is the root of the path, i.e. if the path crosses both children of $v$.

\begin{lem}\label{lem:structureP}
Let $P_{n}\subset \R^{n_{e}}$ be a pair-edge incidence polytope for a trivalent tree with $n$ leaves where $n\geq 4$. Then $\dim(P_{n})=n_{e}-1=2n-4$. Hence the codimension of $P_{n}$ is one. The unique equation defining $P_{n}$ is $\sum_{e\in E_{0}} x_{e}=2$. For each inner node $v\in V$ let $e_{1}(v)$, $e_{2}(v)$, $e_{3}(v)$ denote the three adjacent edges. Then exactly $3(n-2)$ facets define $P_{n}$ and they are given by
\begin{eqnarray}\label{eq:facets}
x_{e_{1}(v)}+x_{e_{2}(v)}-x_{e_{3}(v)}\geq 0,\quad x_{e_{2}(v)}+x_{e_{3}(v)}-x_{e_{1}(v)}\geq 0,\\ 
\nonumber \mbox{and }x_{e_{3}(v)}+x_{e_{1}(v)}-x_{e_{2}(v)}\geq 0\quad\mbox{for all $v\in V$.}
\end{eqnarray}
\end{lem}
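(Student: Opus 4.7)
The plan is to verify the claimed $\mathcal{H}$-description of $P_n$ in four parts: confirming the linear equation and the triangle inequalities, establishing the dimension, showing each triangle inequality is facet-defining, and finally proving completeness of the facet list.

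For the easy direction, each generator $q_{ij}$ corresponds to the unique path in $T$ between the leaves $i$ and $j$. Such a path uses exactly two terminal edges (those incident with $i$ and $j$), so $\sum_{e\in E_0} q_{ij,e}=2$, and $P_n$ lies in the affine hyperplane $\{\sum_{e\in E_0} x_e=2\}$. At any inner node $v$ with incident edges $e_1(v), e_2(v), e_3(v)$, the path between $i$ and $j$ either avoids $v$ entirely (restricting to $(0,0,0)$ on these coordinates) or passes through $v$, entering on one of the three edges and leaving on another (restricting to one of $(1,1,0)$, $(1,0,1)$, $(0,1,1)$). In every case the three triangle inequalities at $v$ hold. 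Hence $P_n$ satisfies the hyperplane equation and the $3(n-2)$ inequalities.

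For the dimension claim, the hyperplane containment gives $\dim(P_n)\leq n_e-1$. For the reverse inequality I would argue by induction on $n$. For the base $n=4$, the six generators of the quartet tree can be checked to span a $4$-dimensional affine subspace. For $n\geq 5$, pick a cherry with leaves $i,j$ sharing parent $a$, and let $T'$ denote the trivalent tree with $n-1$ leaves obtained by collapsing the cherry into a single leaf at $a$. The generators $q_{kl}$ of $P_n$ with $k,l\in[n]\setminus\{i,j\}$ restrict to generators of $P(T')=P_{n-1}$, which by induction span an affine space of dimension $2n-6$ in their coordinates. Adjoining $q_{ij}$ and $q_{ik}$ for some $k\notin\{i,j\}$ contributes two additional affinely independent directions along the two new terminal edges $(a,i)$ and $(a,j)$, producing the required $2n-4$.

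To show each of the $3(n-2)$ triangle inequalities is facet-defining, I would produce $n_e-2=2n-5$ affinely independent generators lying in the corresponding supporting hyperplane. Equality in $x_{e_1(v)}+x_{e_2(v)}-x_{e_3(v)}\geq 0$ holds precisely for paths that either avoid $v$ altogether or pass through $v$ using $e_3(v)$ as one of their two edges at $v$. Let $T_1$ and $T_2$ be the subtrees obtained by deleting $e_3(v)$, with $e_1(v), e_2(v)$ lying in $T_1$ (on the side of $v$); generators for the paths inside $T_2$ together with paths that stay inside $T_1$ without crossing $v$, augmented by paths that cross $v$ along $e_3(v)$ and continue into $T_1$, yield the required count. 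The affine independence can again be verified by induction on the sizes of the two subtrees.

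The main obstacle is proving completeness: that every facet of $P_n$ is one of the stated inequalities. My approach would be induction on $n$ via the cherry contraction. Given a facet-defining inequality $\la a,x\ra\geq b$ for $P_n$, I would project onto the coordinates of the contracted tree $T'$; any inequality that remains active after contraction must, by the inductive hypothesis, be a nonnegative combination of the triangle inequalities of $T'$ together with the hyperplane equation, and the extra inequalities gained by the cherry can only be the three triangle inequalities at the cherry's root. Alternatively, one can show directly that every vertex of the polyhedron defined by the listed inequalities is a $0/1$ vector of the form $q_{ij}$, by tracing how the triangle inequalities force the support of a vertex to be the edge set of a path between two leaves.
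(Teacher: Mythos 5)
Your outline has the right architecture and even the same organizing idea as the paper (induction on $n$ by contracting a cherry), but the decisive step is exactly the one you leave unexecuted: showing that the equation $\sum_{e\in E_0}x_e=2$ together with the $3(n-2)$ triangle inequalities cuts out \emph{no more} than $P_n$. Of your two suggested routes, the first (projecting a facet-defining inequality of $P_n$ onto the contracted tree and invoking the inductive hypothesis) is the problematic one: facets of $P_n$ do not correspond in any simple way to facets of the projection $\pi(P_n)$, and the assertion that ``the extra inequalities gained by the cherry can only be the three triangle inequalities at the cherry's root'' is precisely the content of the lemma, not a step toward it. The second route (every vertex of the candidate polyhedron $Q_n$ is some $q_{ij}$) is sound in principle but ``tracing how the triangle inequalities force the support of a vertex to be a path'' is again the whole difficulty. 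The paper closes this gap with a concrete containment argument: let $Q_n$ be the polyhedron defined by the listed constraints, project $Q_n$ onto the coordinates of $E\setminus\{e_1,e_2\}$ for a cherry $\{e_1,e_2\}$, observe that the three inequalities at the cherry node impose nothing on the adjoining edge coordinate $x_3$ beyond $x_3\ge 0$, so the image is the pyramid $\widehat{Q}_{n-1}=\widehat{P}_{n-1}$ over $P_{n-1}$ with apex $0$ (induction), then write $\pi(x)$ as a convex combination of the generators of $P_{n-1}$ and $0$ and check case by case (generator with $x_3=0$, generator with $x_3=1$, and the apex) that every lift back to $Q_n$ is forced to lie in $P_n$. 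Some version of this lifting analysis, or a genuinely complete vertex classification, is needed; as written your proposal asserts the conclusion rather than proving it.

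Two smaller points. For the dimension, your induction double-counts: the generators $q_{kl}$ with $k,l\notin\{i,j\}$ project onto only those generators of $P_{n-1}$ that avoid the new leaf $a$, so the inductive hypothesis does not directly give dimension $2n-6$ for their span; you need to include the projections of the $q_{ik}$ as well (the paper sidesteps this entirely by citing the full-rank result for the pair-edge incidence matrix of Mihaescu and Pachter). And to certify a facet of the $(2n-4)$-dimensional polytope $P_n$ you need $2n-4$ affinely independent points on the supporting hyperplane, not $n_e-2=2n-5$ as you state.
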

\begin{proof}
Let $M_{n}$ be the pair-edge incidence matrix, i.e. a ${n \choose 2} \times n_{e} $ matrix with rows corresponding to the points defining $P_{n}$. By Lemma 1 in \cite{mihaescu2008combinatorics} the matrix has full rank and hence $P_{n}$ has codimension one in $\R^{n_{e}}$. Moreover since each path necessarily crosses two terminal edges then each point generating $P_{n}$ satisfies the equation $\sum_{e\in E_{0}} x_{e}=2$ and hence this is the equation defining the affine subspace containing $P_{n}$. 

Now we show that the inequalities give a valid facet description for $P_{n}$. This can be checked directly for $n=4$ (e.g. using Polymake \cite{math.CO/0507273}). Assume this is true for all $k<n$. By $Q_{n}$ we will define the polytope defined by the equation $\sum_{e\in E_{0}} x_{e}=2$ and $3(n-2)$ inequalities given by (\ref{eq:facets}). It is obvious that $P_{n}\subseteq Q_{n}$ since all points generating $P_{n}$ satisfy the equation and the inequalities. We show that the opposite inclusion also holds. 

Consider any cherry $\{e_{1},e_{2}\}\subset E$ in the tree given by two leaves denoted by $1$, $2$ and the separating inner node $a$. Note that the inequalities in (\ref{eq:facets}) imply in particular that $x_{e}\geq 0$ for all $e\in E$. Define a projection $\pi: \R^{n_{e}}\rightarrow \R^{n_{e}-2}$ on the coordinates related to all the edges apart from the two in the cherry. We have $\pi(Q_{n})= \widehat{Q}_{n-1}$, where $\widehat{P}={\rm conv}\{0,P\}$ is a cone with the base given by $P$.  The projection  $\pi(Q_{n})$ is described by all the triples of inequalities for all the inner nodes apart from the one incident with the cherry and the defining equation becomes an inequality
$$
\sum_{e\in E_{0}\setminus \{e_{1},e_{2}\}}x_{e}\leq 2.
$$
Denote the edge incident with $e_{1},e_{2}$ by $e_{3}$ and the related coordinates of $x$ by $x_{1},x_{2},x_{3}$. The three inequalities involving $x_{1}$ and $x_{2}$ do not affect the projection since they imply that 
$$
\max\{x_{1}-x_{2},x_{2}-x_{1}\}\leq x_{3}\leq x_{1}+x_{2}
$$
and hence in particular if $x_{1}=x_{2}$ the constraint becomes $[0,2x_{1}]$. Consequently the set given by $x_{1}+x_{2}-x_{3}\geq 0$, $x_{1}+x_{3}-x_{2}\geq 0$, $x_{2}+x_{3}-x_{1}\geq 0$ projects down to $\R_{\geq 0}$. However since $\widehat{Q}_{n-1}$ is contained in the nonnegative orthant there are no additional constraints on $x_{3}$. Inequalities in Equation (\ref{eq:facets}) define a polyhedral cone and the equation $\sum_{e\in E_{0}\setminus \{e_{1},e_{2}\}}x_{e}=t$ for $t\geq 0$ cuts out a bounded slice of the cone which is equal to $t\cdot P_{n-1}$. The sum of all these for $t\in[0,2]$ is exactly $\widehat{Q}_{n-1}$. Since $\widehat{Q}_{n-1}=\widehat{P}_{n-1}$ by induction then each $\pi(x)$ is a convex combination of the points generating $P_{n-1}$ and zero, i.e. $\pi(x)=\sum c_{ij} p_{ij}$ where the sum is over all $i\neq j\in \{a,3,\ldots,n\}$ and $c_{ij}\geq 0$, $\sum c_{ij}\leq 1$. 

Let $x\in Q_{n}$. Since $\pi(x)\in \widehat{P}_{n-1}$ we can write it as the linear combination above. Next we lift this combination back to $Q_{n}$ and show that any such a lift has to lie in $P_{n}$. This would imply that in particular $x\in P_{n}$. Let $y$ denote a lift of $\pi(x)$ to $Q_{n}$. We have $$y=\sum c_{ij} r_{ij}+\left(1-\sum c_{ij}\right) \,r_{0},$$
where $r_{ij}$ is a lift of $\pi(p_{ij})$ and $r_{0}$ is a lift of the origin. It suffices to show that each $r_{ij}$ and $r_{0}$ necessarily lie in $P_{n}$.

 Consider the following three cases. First if $p_{ij}\in P_{n-1}$ is such that $x_{3}=0$ then sum of all the other coordinates related to the terminal edges is two since $P_{n-1}=Q_{n-1}$ and $Q_{n-1}$ satisfy the equation $\sum_{e\in E_{0}\setminus \{e_{1},e_{2}\}}x_{e}=2$. Hence if we lift $\pi(p_{ij})$ to $Q_{n}$ then $x_{3}=0$ and $$x_{1}+x_{2}\geq 0 ,\quad  x_{1}-x_{2}\geq 0,\quad x_{2}-x_{1}\geq 0$$
by plugging $x_{3}=0$ into the three inequalities for the node $a$. But since $r_{ij}$ must also satisfy the equation $\sum_{e\in E_{0}} x_{e}=2$ and since we already have 
$$\sum_{e\in E_{0}\setminus \{e_{1},e_{2}\}}x_{e}=2$$ then $x_{1}+x_{2}=0$ and hence  $x_{1}=x_{2}=0$. Consequently, $r_{ij}$ is a vertex of $P_{n}$. Second if $p_{ij}$ is a vertex of $P_{n-1}$ such that $x_{3}=1$ then the sum of all the other coordinates of $p_{ij}$ related to the terminal edges is one and hence since the lift is in $Q_{n}$ we have $x_{1}+x_{2}=1$. The additional inequalities give that $x_{1},x_{2}\geq 0$. Hence in this case $r_{ij}$ is a convex combination of two points in $P_{n}$ - one corresponding to a path finishing in one of the edges and the other in the other. Finally, we can easily check that zero lifts uniquely to a point in $P_{n}$ corresponding to the path $E(12)$. Indeed, from the equation defining $Q_{n}$ we have $x_{1}+x_{2}=2$ and from the inequalities since $x_{3}=0$ we have $x_{1}=x_{2}=1$. Therefore every lift $y$ of $\pi(x)$ to $Q_{n}$  can be written as a convex combination of points generating $P_{n}$ and hence $y\in P_{n}$.  Consequently $x\in P_{n}$ and hence $Q_{n}\subseteq P_{n}$.
\end{proof}

Lemma \ref{lem:structureP} shows that $P_{n}$ has an extremely simple structure. The inequalities give a polyhedral cone and the equation cuts out the polytope $P_{n}$ as a slice of this cone. The result gives us also the representation of $\Gamma(Q_{\delta} )$ in terms of the defining equations and inequalities. 

\begin{prop}[Structure of $\Gamma(Q_{\delta} )$]\label{lem:structureG}
Polytope $\Gamma(Q_{\delta} )\subset \cV_{\delta}$ is given as an intersection of the sets defined by the inequalities in (\ref{eq:facets}) together with $|\delta|+1$ equations given by  
\begin{equation}\label{eq:eqGQ1}
\begin{array}{l}
2y_{v}=x_{v\ch_{1}(v)}+x_{v\ch_{2}(v)}-x_{\pa(v)v} \quad\mbox{ for all } v\neq r \mbox{ such that } \delta_{v}=1,\\
2y_{r}=x_{r\ch_{1}(r)}+x_{r\ch_{2}(r)}+x_{r\ch_{3}(r)}\quad\mbox{if }\delta_{r}=1,\mbox{ and}\\
\sum_{e\in E_{0}} x_{e}=4.
\end{array}
\end{equation}
\end{prop}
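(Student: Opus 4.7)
The plan is to deduce the statement directly from Lemma \ref{lem:structureP} via the linear map $(\id\times f_{r}):\R^{n_{e}}\to \R^{n_{e}}\times \R^{|\delta|}$ introduced just before the proposition. We have already observed that $(\id\times f_{r})(2P_{n})=\Gamma(Q_{\delta})$, so the hyperplane/inequality description of $\Gamma(Q_{\delta})$ can be read off from the description of $2P_{n}$ once we graph in the auxiliary coordinates $y_{v}$.

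First I would scale Lemma \ref{lem:structureP}. Since $P_{n}\subset\R^{n_{e}}$ is cut out by the equation $\sum_{e\in E_{0}}x_{e}=2$ together with the homogeneous triangle inequalities (\ref{eq:facets}) at every inner node, and the inequalities are invariant under positive dilation, we obtain
$$
2P_{n}=\{x\in\R^{n_{e}}:\sum_{e\in E_{0}}x_{e}=4 \text{ and the inequalities (\ref{eq:facets}) hold}\}.
$$
Second, the map $\id\times f_{r}$ is an affine injection whose image is the graph of $f_{r}$ over $\R^{n_{e}}$. Therefore $(\id\times f_{r})(2P_{n})$ consists of all pairs $(x,y)\in\R^{n_{e}}\times\R^{|\delta|}$ such that $x\in 2P_{n}$ and $y=f_{r}(x)$. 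Writing out the coordinate formulas of $f_{r}$ recalled before the proposition yields exactly the $|\delta|+1$ equations listed in (\ref{eq:eqGQ1}), together with the inequalities (\ref{eq:facets}).

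I do not anticipate a genuine obstacle, as everything reduces to passing from the hyperplane description of $P_{n}$ to the graph of a linear map. The only point requiring a brief remark is that no additional inequality on the $y$-coordinates is needed: each $y_{v}$ is fully determined by $x$ through one of the equations in (\ref{eq:eqGQ1}), so any constraint on $y_{v}$ is implied by the constraints (\ref{eq:facets}) on $x$. For instance, for $v\neq r$ the triangle inequality $x_{v\ch_{1}(v)}+x_{v\ch_{2}(v)}-x_{\pa(v)v}\geq 0$ at $v$ already guarantees $y_{v}\geq 0$, and the other two triangle inequalities at $v$ bound $y_{v}$ from above, so the facet structure of $\Gamma(Q_{\delta})$ is inherited verbatim from that of $2P_{n}$ through the graph map.
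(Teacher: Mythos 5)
Your proposal is correct and follows the same route the paper intends: the proposition is a direct consequence of the identity $(\id\times f_{r})(2P_{n})=\Gamma(Q_{\delta})$ established just before its statement, combined with the facet description of $P_{n}$ from Lemma \ref{lem:structureP}, scaled by $2$ (which leaves the homogeneous inequalities (\ref{eq:facets}) unchanged and turns the defining equation into $\sum_{e\in E_{0}}x_{e}=4$) and augmented by the graph equations $y=f_{r}(x)$. Your closing remark that no further constraints on the $y$-coordinates are needed, since each $y_{v}$ is determined by $x$, correctly accounts for why the description is complete.
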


From this we can partially understand the structure of $\Gamma_{+}(Q_{\delta} )$. First note that $\Gamma_{+}(f)=\Gamma(f)+\R^{d}_{\geq 0}$, where the plus denotes the Minkowski sum. The \textit{ Minkowski sum} of two polyhedra is by definition
$$
\Gamma_{1}+\Gamma_{2}=\{x+y\in\R^{d}: \,x\in\Gamma_{1}, y\in \Gamma_{2}\}.
$$
\begin{lem}\label{lem:positivecoef}
Let $\Gamma\subset \R^{n}_{\geq 0}$ be a polytope and let $\Gamma_{+}$ be the Minkowski sum of $\Gamma$ and the standard cone $\R_{\geq 0}^{n}$. Then all the facets of $\Gamma_{+}$ are of the form $\sum_{i} a_{i} x_{i}\geq c$ where $a_{i}\geq 0$ and $c\geq 0$.
\end{lem}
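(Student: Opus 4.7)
The plan is to argue via the recession cone of $\Gamma_{+}$, which is exactly $\R_{\geq 0}^{n}$. For any facet, the defining inequality $\sum_{i} a_{i} x_{i} \geq c$ must be a valid inequality on the whole polyhedron $\Gamma_{+}$, so it must in particular hold after translating any point of $\Gamma_{+}$ by an arbitrary nonnegative vector. This forces $a_{i} \geq 0$; the value $c \geq 0$ then comes for free from $\Gamma_{+} \subset \R_{\geq 0}^{n}$.

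In more detail, first I would note that $\Gamma_{+} + \R_{\geq 0}^{n} = \Gamma_{+}$ by definition of the Minkowski sum, so for every $x \in \Gamma_{+}$, every $i$, and every $t \geq 0$ we have $x + t e_{i} \in \Gamma_{+}$. Let $\sum_{i} a_{i} x_{i} \geq c$ be a facet inequality of $\Gamma_{+}$ (which exists and is unique up to positive scaling since a facet has codimension one). Applying the inequality to $x + t e_{i}$ gives $\sum_{j} a_{j} x_{j} + t\, a_{i} \geq c$ for all $t \geq 0$; letting $t \to \infty$ forces $a_{i} \geq 0$ for each $i$.

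For the constant $c$, observe that $\Gamma \subset \R_{\geq 0}^{n}$ by hypothesis and $\R_{\geq 0}^{n} + \R_{\geq 0}^{n} = \R_{\geq 0}^{n}$, so $\Gamma_{+} \subset \R_{\geq 0}^{n}$. Combined with $a_{i} \geq 0$ from the previous step, this yields $\sum_{i} a_{i} x_{i} \geq 0$ for every $x \in \Gamma_{+}$. Since $c$ is the minimum value of $\sum_{i} a_{i} x_{i}$ on $\Gamma_{+}$ (attained on the facet), we conclude $c \geq 0$.

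No real obstacle is expected here; the lemma is a direct consequence of the fact that the recession cone of $\Gamma_{+}$ is $\R_{\geq 0}^{n}$ together with $\Gamma_{+} \subset \R_{\geq 0}^{n}$. The only thing to be a bit careful about is phrasing the recession argument cleanly, since one could equivalently appeal to the standard result that the normal fan of $\Gamma_{+}$ is supported on the dual of the recession cone, which is again $\R_{\geq 0}^{n}$.
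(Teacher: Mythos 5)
The paper states Lemma~\ref{lem:positivecoef} without proof, so there is no argument in the source to compare against. Your proof is correct and complete: the observation that $\Gamma_{+} + \R_{\geq 0}^{n} = \Gamma_{+}$ (equivalently, that $\R_{\geq 0}^{n}$ is contained in the recession cone of $\Gamma_{+}$) forces each facet normal to have nonnegative coordinates, and then the inclusion $\Gamma_{+}\subset\R_{\geq 0}^{n}$ together with the nonnegativity of the $a_i$ gives $c\geq 0$. One small point worth making explicit, since you invoke uniqueness of the facet inequality up to positive scaling, is that $\Gamma_{+}$ is full-dimensional in $\R^{n}$: for any $p\in\Gamma$ the translate $p+\R_{\geq 0}^{n}$ sits inside $\Gamma_{+}$, so $\dim\Gamma_{+}=n$ whenever $\Gamma\neq\emptyset$. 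With that noted, the argument is airtight and is exactly the natural one.
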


Now we are ready to compute multiplicities of the real log-canonical threshold ${\rm RLCT}_{{0}}(Q_{\delta} )$. This completes the proof of Proposition \ref{prop:zerocase}.
\begin{lem}[Computing multiplicities]\label{lem:multiplicities}
Let $T$ be a trivalent tree with $n\geq 4$ leaves and rooted in $r$. Let $\hat{p}\in \cM_{T}$ be such that $\hat{\kappa}_{ij}=0$ for all $i,j\in [n]$ and $\omega_{0}\in \widehat{\Omega}_{0}$. Let $\delta=\delta(\omega_{0})$ be such that $\delta_{v}=1$ if $s_{v}^{0}=1$ and it is zero otherwise. Define $Q_{\delta} (\omega)$ as in (\ref{eq:Q1}). If either $\delta_{r}=0$ or $\delta_{r}=1$ and $\delta_{v}=1$ for all $(r,v)\in E$ then ${\rm mult}_{{0}}(Q_{\delta} )=1$. \end{lem}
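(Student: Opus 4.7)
The plan is to combine Theorem \ref{thm:arnold} with Lemma \ref{lem:lambda}. Since Lemma \ref{lem:lambda} has already identified $\mathrm{rlct}_{0}(\cJ') = n/4$ via the fact that the diagonal ray first meets $\Gamma_+(Q_\delta)$ at $(4/n)\mathbf{1}$ on the facet $F_0 = \Gamma_+(Q_\delta) \cap \{\sum_{e \in E_0} x_e = 4\}$, the multiplicity $\mathrm{mult}_{0}(Q_\delta)$ equals the codimension of the face of $\Gamma_+(Q_\delta)$ containing $(4/n)\mathbf{1}$. Hence $\mathrm{mult}_{0}(Q_\delta) = 1$ follows once I show that $(4/n)\mathbf{1}$ lies in the relative interior of $F_0$.

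I will exploit the Minkowski decomposition $F_0 = \Gamma(Q_\delta) + C$ with $C = \{z \in \R^{n_e+|\delta|}_{\geq 0} : z_e = 0 \text{ for all } e \in E_0\}$. Since $\mathrm{relint}(A+B) = \mathrm{relint}(A) + \mathrm{relint}(B)$ for convex sets, it suffices to produce a decomposition $(4/n)\mathbf{1} = q + z$ with $q$ in $\mathrm{relint}\,\Gamma(Q_\delta)$ (i.e., all inequalities (\ref{eq:facets}) strict at $q$) and $z$ in $\mathrm{relint}\,C$ (i.e., $z_i > 0$ for every coordinate $i \notin E_0$). I will construct $q$ via a single scalar $\alpha$: set $q_{x_e} = 4/n$ on every terminal edge and $q_{x_e} = \alpha$ on every inner edge, with the $y$-coordinates $q_{y_v}$ then forced by the equations (\ref{eq:eqGQ1}). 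Checking the triangle inequalities at each inner node across the possible edge-type patterns (two terminal $+$ one inner, one terminal $+$ two inner, three inner) gives strictness precisely for $\alpha \in (2/n, 8/n)$ (the binding lower bound coming from the one-terminal-two-inner pattern); the additional requirement $z_{x_e} = 4/n - \alpha > 0$ on inner edges further narrows this to $\alpha \in (2/n, 4/n)$.

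The main obstacle, and exactly the reason the lemma's hypothesis splits into two cases, is ensuring $q_{y_v} < 4/n$ at every degenerate vertex $v$. For $v \neq r$ the equation $2 q_{y_v} = q_{x_{v\ch_1(v)}} + q_{x_{v\ch_2(v)}} - q_{x_{\pa(v)v}}$ contains a subtracted parent term, and a quick enumeration of child/parent type combinations gives $q_{y_v} < 4/n$ throughout $\alpha \in (2/n, 4/n)$. At the root, however, when $\delta_r = 1$ the equation $2 q_{y_r} = q_{x_{r\ch_1(r)}} + q_{x_{r\ch_2(r)}} + q_{x_{r\ch_3(r)}}$ has no subtraction; the lemma's hypothesis that all three neighbors of $r$ are degenerate, hence inner, is precisely what pins each $q_{x_{r\ch_i(r)}}$ to $\alpha$, giving $q_{y_r} = 3\alpha/2$ and forcing the tighter upper bound $\alpha < 8/(3n)$. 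The interval $(2/n, 8/(3n))$ is nonempty, so a valid $\alpha$ exists in both hypothesized cases. Were some neighbor of $r$ instead a leaf, its $x$-coordinate would be pinned to $4/n$ and no $\alpha > 2/n$ could achieve $q_{y_r} < 4/n$, which is exactly the structural obstruction that explains why multiplicity one can be guaranteed only in the two stated cases.
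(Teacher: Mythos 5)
Your proof is correct but takes a different and arguably cleaner route than the paper's. The paper also constructs an interior point $q$ of $\Gamma(Q_\delta)$ (the barycenter of the $2n$-path network from Construction \ref{def:constr_net}, or an average of three such networks in the $\delta_r = 1$ case), but its $q$ has $x_e = 4/n$ on \emph{all} edges, so $\tfrac{4}{n}\mathbf{1} - q$ vanishes on the entire edge block and sits on a proper face of the positive orthant. Consequently the paper cannot read the multiplicity directly off the Minkowski structure and instead invokes Lemma \ref{lem:positivecoef}, the uniqueness of the face decomposition of a Minkowski sum, and a by-hand argument ruling out every non-negative combination of the defining equations (\ref{eq:pomoc-ineq}), (\ref{eq:pomoc-yr}) other than the one giving $F_0$. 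Your one-parameter family of interior points ($4/n$ on terminal edges, $\alpha$ on inner edges, with the $y$-coordinates forced by (\ref{eq:eqGQ1})) is tuned precisely so that the residual $\tfrac{4}{n}\mathbf{1} - q$ lies in the \emph{relative interior} of the cone face $C$; the identity $\mathrm{relint}(F_0) = \mathrm{relint}\,\Gamma(Q_\delta) + \mathrm{relint}\,C$ then immediately places $\tfrac{4}{n}\mathbf{1}$ off every proper face of $F_0$. This sidesteps the facet-coefficient bookkeeping entirely, and the two hypotheses of the lemma emerge transparently as the condition for the $\alpha$-interval to be nonempty, driven by the lack of a subtracted parent term in the root equation. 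The one point worth making explicit --- in both the paper's proof and yours --- is that $F_0$ really is a facet of $\Gamma_+(Q_\delta)$ and not a lower-dimensional face, so that the relative-interior membership yields codimension one in Theorem \ref{thm:arnold}; this follows from the persistence of your decomposition $\tfrac{4}{n}\mathbf{1}+v = q'+z'$ under small perturbations $v$ satisfying $\sum_{e\in E_0} v_e = 0$, since all the strict inequalities you use are stable.
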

\begin{proof}
A standard result for Minkowski sums says that each face of a Minkowski sum of two polyhedra can be decomposed as a sum of two faces of the summands and this decomposition is unique. Each facet of $\Gamma_{+}(Q_{\delta} )$ is decomposed as a face of the standard cone $\R^{n_{e}+|\delta|}_{\geq 0}\subset \cV_{\delta}$ plus a face of $\Gamma(Q_{\delta} )$. We say that a face of $\Gamma(Q_{\delta} )$ induces a facet of $\Gamma_{+}(Q_{\delta} )$ if there exists a face of the standard cone $\R^{n_{e}+|\delta|}_{\geq 0}$ such that the Minkowski sum of these two faces gives a facet of $\Gamma_{+}(Q_{\delta} )$. However, since the dimension $\Gamma(Q_{\delta} )$ is lower than the dimension of the resulting polyhedron it turns out that one face of $\Gamma(Q_{\delta} )$ can induce more than one facet of $\Gamma_{+}(Q_{\delta} )$. In particular $\Gamma(Q_{\delta} )$ itself induces more than one facet and one of them is $F_{0}$ given by (\ref{eq:F0}). 

Every facet of $\Gamma_{+}(Q_{\delta} )$ containing $4/n\mathbf{1}$ after normalizing the coefficients to sum to $n$, i.e. $\sum_{v}\a_{v}+\sum_{e}\beta_{e}=n$, is of the form 
\begin{equation}\label{eq:pomoc-facet}
\sum_{v}\a_{v}y_{v}+\sum_{e}\beta_{e}x_{e}\geq 4,
\end{equation} 
where by Lemma \ref{lem:positivecoef} we can assume that $\a_{v},\beta_{e}\geq 0$.

Our approach can be summarized as follows. Using Construction \ref{def:constr_net} we provide coordinates of a point $q\in\Gamma(Q_{\delta} )$ such that $4/n\mathbf{1}$ lies on the boundary of $q+\R^{n_{e}+|\delta|}_{\geq 0}$. Then $4/n \mathbf{1}$ can only lie on faces of $\Gamma_{+}(Q_{\delta} )$ induced by faces of $\Gamma(Q_{\delta} )$ containing $q$. 

First, assume that $\delta_{r}=0$ which corresponds to the case when the root $r$ represents a non-degenerate random variable. Consider the point $q\in\Gamma(Q_{\delta})$ induced by the network of $2n$ paths given in Construction \ref{def:constr_net}. Since $x_{e}=4/n$ for all $e\in E$ then from the description of $\Gamma(Q_{\delta} )$ in Lemma \ref{lem:structureG} we can check that all defining inequalities are strict for this point. Therefore $q$ lies in the interior of $\Gamma(Q_{\delta} )$. Therefore, the only facets of $\Gamma_{+}(Q_{\delta} )$ containing $q$ are these induced by $\Gamma(Q_{\delta} )$ itself. The equation defining a facet induced by $\Gamma(Q_{\delta})$ has to be obtained as a combination of the defining equations: $\sum_{e\in E_{0}}x_{e}=4$ and $|\delta|$ equations
\begin{equation}\label{eq:pomoc-ineq}
2y_{v}-x_{v\ch_{1}({v})}-x_{v\ch_{2}({v})}+x_{\pa({v})v}=0
\end{equation}
for all $v\in V$ such that $\delta_{v}=1$. We check possible combinations such that the form of the induced inequality as given in (\ref{eq:pomoc-facet}) is attained. The first inequality, defining $F_{0}$, is already of this form (c.f. equation (\ref{eq:F0})). The sum of all the coefficients is $n$ since there are $n$ terminal edges. Any other facet has to be obtained by adding to the first equation (since the right hand side in (\ref{eq:pomoc-facet}) is $4$) a non-negative (since the coefficients in front of $y_{v}$ need to be non-negative) combination of equations in (\ref{eq:pomoc-ineq}). However, since the sum of the coefficients in (\ref{eq:pomoc-ineq}) is $+1$ this contradicts the assumption that the sum of coefficients in the defining inequality is $n$. Consequently, if $\delta_{r}=0$ the codimension of the face hit by $4/n \mathbf{1}$ is $1$ and hence by Theorem \ref{thm:arnold} we have that ${\rm mult}_{{0}}(Q_{\delta} )=1$. 

Second, if $\delta_{r}=1$ and $\delta_{v}=1$ for all children of $r$ in $T$ then since all the nodes adjacent to $r$ (denote them by $a,b,c$) are inner we have three different ways of conducting the construction of the $n$-path network in Lemma \ref{def:constr_net} (by omitting each of the incident edges). Hence we get three different points and their barycenter satisfies $x_{ra}=x_{rb}=x_{rc}=8/3n$ and $x_{e}=4/n$ for all the other edges; $y_{r}=4/n$, $y_{a}=y_{b}=y_{c}=8/3n$ and $y_{v}=2/n$ for all the other inner nodes. Denote this point by $q$. By the facet description of $\Gamma(Q_{\delta} )$ derived in Proposition \ref{lem:structureG} we can check that this point cannot lie in any of the facets defining $\Gamma(Q_{\delta} )$ and hence it is an interior point of the polytope. As in the first case it means that the facets of $\Gamma_{+}(Q_{\delta} )$ containing $q$ are induced by  $\Gamma(Q_{\delta} )$. By Proposition \ref{lem:structureG} the affine span is given by the equation defining $F_{0}$, the equations (\ref{eq:pomoc-ineq}) for all inner edges $v$ apart from the root and in addition for the root we have
\begin{equation}\label{eq:pomoc-yr}
2y_{r}-x_{ra}-x_{rb}-x_{rc}=0.
\end{equation}
Since the sum of coefficients in the above equation is negative we cannot use the same argument as in the first case. Instead we add to $\sum_{e\in E_{0}}x_{e}=4$ a non-negative combination of equations in (\ref{eq:pomoc-ineq}) each with coefficient $t_{v}\geq 0$ and then add (\ref{eq:pomoc-yr}) with coefficient $\sum_{v\neq r} t_{v}$. The sum of coefficients in the resulting equation will be $n$ by construction. The coefficient of $x_{ra}$ is $t_{a}-\sum_{v\neq r}t_{v}$. Since it has to be non-negative it follows that $t_{v}=0$ for all $v$ apart from $a$. However, by checking the coefficient of $x_{rb}$ one deduces that $t_{v}=0$ for all inner nodes $v$. Consequently the only possible facet of $\Gamma_{+}(Q_{\delta} )$ containing $4/n\mathbf{1}$ is $F_{0}$ and hence again ${\rm mult}_{{0}}(Q_{\delta} )=1$. 

\end{proof}

\section{Proof of Theorem \ref{th:main}}\label{sec:last-proof}

In this section we complete the proof of Theorem \ref{th:main}. We split the proof into three steps. 
\paragraph{\textbf{Step 1}} To approximate $\log Z(N)$ it suffices to approximate $\log I(N)$, where $I(N)$ is given by (\ref{eq:ZN})
because $\log Z(N)=\hat{\ell}_{N}+\log I(N)$. By Theorem \ref{th:watanabe} equivalently we can compute ${\rm RLCT}_{\Theta_{T}}(f;\varphi)$, where $f$ is the normalized log-likelihood and $\varphi$ is the prior distribution satisfying (A1). By Theorem \ref{th:barI} and Theorem \ref{th:redtokappas} this real log-canonical threshold is equal to  ${\rm RLCT}_{\Omega_{T}}(\cI)$, where $\cI$ is the ideal given by (\ref{eq:pullback}).

\paragraph{\textbf{Step 2}} We compute separately ${\rm RLCT}_{\Omega_{T}}(\cI)$ in the case when $n=3$. If $T$ is rooted in the inner node the approximation for $\log Z(N)$ follows from Theorem 4 in \cite{rusakov2006ams}. Thus if $\widehat{E}=E$, which in \cite{rusakov2006ams} corresponds to the type 2 singularity, then 
\begin{equation}\label{eq:n3sing}
\log Z(N)=\hat{\ell}_{N} -2\log N+O(1) \quad \mbox{or} \qquad {\rm RLCT}_{\Omega_{T}}(\cI)=(2,1).
\end{equation}
Since all the neighbours of the root are leaves and hence by (A2) they are non-degenerate we need only to make sure that  the first equation in Theorem \ref{th:main} gives (\ref{eq:n3sing}). This follows from the fact that $l_{2}=0$ and $l_{3}=0$. In the case when $|\widehat{E}|=1$ (type 1 singularity) we have
$$
\log Z(N)=\hat{\ell}_{N} -\frac{5}{2}\log N+O(1)\quad \mbox{or} \qquad {\rm RLCT}_{\Omega_{T}}(\cI)=(\frac{5}{2},1).
$$
The second equation in Theorem \ref{th:main} holds since  $l_{2}=1$, $l_{3}=0$ and $c=0$. If $\widehat{E}=\emptyset$ we have
$$
\log Z(N)=\hat{\ell}_{N}-\frac{7}{2}\log N+O(1)\quad \mbox{or} \qquad {\rm RLCT}_{\Omega_{T}}(\cI)=(\frac{7}{2},1),
$$
which again is true since $l_{2}=0$, $l_{3}=1$ and $c=0$.

Now assume that $T$ is rooted in a leaf, say $1$. If there exists $i,j=1,2,3$ such that $\hat{\kappa}_{ij}\neq 0$ (or $|\widehat{E}|\leq 1$) then $\widehat{V}=\emptyset$ and by Proposition \ref{prop:reg}
$$
\log Z(N)=\hat{\ell}_{N}-\frac{7-2l_{2}}{2}+O(1)\quad \mbox{or} \qquad {\rm RLCT}_{\Omega_{T}}(\cI)=(\frac{7-2l_{2}}{2},1).
$$ 
If $\widehat{E}=E$ then by Theorem \ref{th:redtokappas} for every $\omega_{0}\in \widehat{\Omega}_{T}$
$$
{\rm RLCT}_{\omega_{0}}(\cI)=(\frac{3}{2},0)+{\rm RLCT}_{\omega_{0}}(\cJ).
$$
Moreover, by Lemma \ref{lem:Jjaksiepatrzy}, for every $\omega_{0}\in \widehat{\Omega}_{0}$
$$
{\rm RLCT}_{\omega_{0}}(\cJ)={\rm RLCT}_{{0}}(\la\eta_{1,h}\eta_{h,2}, \eta_{1,h}\eta_{h,3}, s_{h}^{\delta_{h}}\eta_{h,2}\eta_{h,3}\ra),
$$
where $\delta_{h}=1$ if $s_{h}^{0}=1$ and $\delta_{h}=1$ otherwise. It can be checked directly by using the Newton diagram method and  Theorem \ref{thm:arnold} that ${\rm RLCT}_{\omega_{0}}(\cJ)=(\frac{3}{4},1)$ both if $\delta_{h}=0$ and $\delta_{h}=1$ and hence ${\rm RLCT}_{\omega_{0}}(\cI)=(\frac{9}{4},1)$. Since the points in $\widehat{\Omega}_{0}$ such that $s_{h}^{0}\neq 1$ lie in the interior of $\Omega_{T}$ then for these points ${\rm RLCT}_{\omega_{0}}(\cI)={\rm RLCT}_{\Omega_{0}}(\cI)$ where $\Omega_{0}$ is a neighborhood of $\omega_{0}$ in $\Omega_{T}$. Hence by (\ref{eq:takemin}) we have that 
$${\rm RLCT}_{\Omega_{T}}(\cI)=\min_{\omega_{0}\in\widehat{\Omega}_{T}}{\rm RLCT}_{\Omega_{0}}(\cI)\leq \min_{\omega_{0}\in\widehat{\Omega}_{0}\cap{\rm int}(\Omega_{T})}{\rm RLCT}_{\Omega_{0}}(\cI)=(\frac{9}{4},1).$$ 
On the other hand by (\ref{eq:rlct-constr}) and then Proposition \ref{lem:red-2-deepest}
$${\rm RLCT}_{\Omega_{T}}(\cI)\geq \min_{\omega_{0}\in\widehat{\Omega}_{T}}{\rm RLCT}_{\omega_{0}}(\cI)\geq \min_{\omega_{0}\in\widehat{\Omega}_{{\rm deep}}}{\rm RLCT}_{\omega_{0}}(\cI)=(\frac{9}{4},1).$$
It follows that
\begin{equation}\label{eq:n3leafroot}
\log Z(N)=\hat{\ell}_{N}-\frac{9}{4}\log N+O(1)\quad \mbox{or} \qquad {\rm RLCT}_{\Omega_{T}}(\cI)=(\frac{9}{4},1),
\end{equation}
which gives the the second equation in Theorem \ref{th:main} since in this case $l_{2}=l_{3}=c=0$.

\paragraph{\textbf{Step 3, Case 1}} Assume now that $n\geq 4$ and $r\notin\widehat{V}$. In this case every $T_{i}$ for $i=1,\ldots,k$ is rooted in one of its leaves. Hence ${\rm rlct}_{\omega_{0}}(\cJ(T_{i}))=|L_{i}|/4$ for every $i=1,\ldots k$. If $|L_{i}|\neq 3$ this follows from Proposition \ref{prop:zerocase}. If $|L_{i}|=3$ it follows from Case 2 above. By Lemma \ref{lem:redzQ} and Proposition \ref{prop:reg}, for every $\omega_{0}\in \widehat{\Omega}_{0}$ we have that
$$
{\rm rlct}_{\omega_{0}}(\cI)=(\frac{n}{2},0)+\sum_{i=1}^{m} \frac{n_{v}^{i}+n_{e}^{i}-n_{i}-2l_{2}^{i}}{2}+\sum_{i=1}^{k} \frac{|L_{i}|}{4},
$$
where $n_{v}^{i}$, $n_{e}^{i}$, $l_{2}^{i}$ are respectively the number of vertices, edges and and degree two nodes in $\widehat{T}$ of $S_{i}$; and $L_{i}$ is the set of leaves of $T_{i}$. We use three simple formulas: $\sum_{i} n_{v}^{i}=l_{1}+l_{2}+l_{3}$ (i.e. only degree zero nodes of $\widehat{T}$ do not lie in the $S_{i}$'s), $\sum_{i}n_{e}^{i}=|E\setminus \widehat{E}|$ (i.e. $E\setminus\widehat{E}$ is the set of all edges of all the $S_{i}$'s) and $\sum_{i}|L_{i}|=l_{2}+n-l_{1}$ (i.e. the leaves of all the $T_{i}$'s are precisely the degree two nodes of $\widehat{T}$ and these leaves of $T$ which have degree zero). Moreover for any graph with the vertex set $V$ and the edge set $E$, $\sum_{v\in V} \deg(v)=2{n_{e}}$ (see e.g. Corollary 1.2.2 in \cite{semple2003pol}). Therefore with the formula applied for $\widehat{T}$ we have $l_{1} +2l_{2}+3l_{3}=2|E\setminus \widehat{E}|$. Using these four formulas we show that ${\rm rlct}_{\omega_{{0}}}(\cI)=\frac{1}{4}(3n+l_{2}+5l_{3})$. Moreover, since $\delta_{r}=0$ for all $\omega_{0}\in\widehat{\Omega}_{0}$ then by Lemma \ref{lem:multiplicities} ${\rm mult}_{{0}}(\cJ(T_{i}))=1$ for every $\omega_{0}\in\widehat{\Omega}_{0}$. Therefore, 
\begin{equation}\label{eq:fina-in-fina-proof}
{\rm RLCT}_{\omega_{{0}}}(\cI)=\left(\frac{1}{4}(3n+l_{2}+5l_{3}),1\right).
\end{equation}
Now we show that ${\rm RLCT}_{\Omega_{T}}(\cI)$ also has the same form. Let ${\omega}_{2}$ be a point in $\widehat{\Omega}_{0}$ such that $s_{v}\neq 1$ for all $v\in V$ and let $\omega_{1}\in \widehat{\Omega}_{{\rm deep}}$. Equation (\ref{eq:fina-in-fina-proof}) is true both if $\omega_{0}=\omega_{1}$ and $\omega_{0}=\omega_{2}$ and hence ${\rm RLCT}_{\omega_{1}}(\cI)={\rm RLCT}_{\omega_{2}}(\cI)$.
However, since ${\omega}_{2}$ is an inner point of $\Omega_{T}$ it follows, from the definition of ${\rm RLCT}_{\Omega_{T}}(\cI)$ as the minimum over all points in $\Omega_{T}$, that 
$${\rm RLCT}_{\Omega_{T}}(\cI)\leq {\rm RLCT}_{{0}}(\cI_{{\omega}_{2}}).$$ 
On the other hand by (\ref{eq:rlct-constr}) and Proposition \ref{lem:red-2-deepest}
$$
{\rm RLCT}_{0}(\cI_{\omega_{1}})=\min_{\omega_{0}\in \widehat{\Omega}_{T}} {\rm RLCT}_{0}(\cI_{\omega_{0}})\leq \min_{\omega_{0}\in \widehat{\Omega}_{T}} {\rm RLCT}_{\Omega_{0}}(\cI_{\omega_{0}})={\rm RLCT}_{\Omega_{T}}(\cI).
$$
Therefore, if $r\notin\widehat{V}$ then in fact ${\rm RLCT}_{\Omega_{T}}(\cI)=(\frac{1}{4}(3n+l_{2}+5l_{3}),1)$ and hence 
$$
Z(N)=\hat{\ell}_{N}-\frac{1}{4}\left(3n+l_{2}+5l_{3}\right)\log N + O(1).
$$
Hence in this case the main formula in Theorem \ref{th:main} is proved since $c=0$.

\paragraph{\textbf{Step 3, Case 2}} Let now $n\geq 4$ and $r\in\widehat{V}$. Let $1\leq j\leq k$ be such that $r$ is an inner node of $T_{j}$ and $\omega_{0}\in \widehat{\Omega}_{0}$. For all $i\neq j$ $T_{i}$ is rooted in one of its leaves. Therefore, by Lemma \ref{lem:lambda}, Lemma \ref{lem:multiplicities} and Step 2 above for all $i\neq j$ we have that ${\rm RLCT}_{\omega_{0}}(\cJ(T_{i}))=(|L_{i}|/4,1)$.  It remains to compute ${\rm RLCT}_{\omega_{0}}(\cJ(T_{j}))$.  If $|L_{j}|=3$ then ${\rm RLCT}_{\omega_{0}}(\cJ(T_{j}))=(1/2,1)=((|L_{j}|-1)/4,1)$ by the Step 2 above (c.f.  (\ref{eq:n3sing})). Therefore in this case the computations are the same as in Step 3, Case 1 but with a difference of $\frac{1}{4}$ in the real log-canonical threshold.  Therefore we obtain
$$
Z(N)=\hat{\ell}_{N}-\frac{1}{4}\left(3n+l_{2}+5l_{3}-1\right)\log N + O(1).
$$

However, if $|L_{j}|\geq 4$ then by Lemma \ref{lem:lambda} ${\rm rlct}_{0}(\cJ(T_{j}))=|L_{j}|/4$ and hence as in Step 3, Case 1 we have $\sum_{i=1}^{k} {\rm rlct}_{0}(\cJ_{\omega_{0}}(T_{i}))=\frac{1}{4}(n-l_{1}+l_{2})$. Therefore
$$
{\rm rlct}_{\Omega_{T}}(\cI)=\frac{1}{4}(3n+l_{2}+5l_{3}).
$$
We compute the multiplicity by considering different subcases. If all the neighbours of $r$ are degenerate then for all points $\omega_{0}\in \widehat{\Omega}_{{\rm deep}}$ we have that $\delta_{r}=1$ and $\delta_{v}=1$ for all neighbours $v$ or $r$. It follows from Lemma \ref{lem:multiplicities} that ${\rm mult}_{\omega_{0}}(\cJ(T_{j}))=1$ and hence ${\rm mult}_{\Omega_{T}}(\cI)=1$. Therefore, 
$$
Z(N)=\hat{\ell}_{N}-\frac{1}{4}\left(3n+l_{2}+5l_{3}\right)\log N + O(1).$$
Otherwise we do not have explicit bounds on the multiplicity. Since ${\rm mult}_{\Omega_{T}}(\cI)\geq 1$ then 
$$Z(N)=\hat{\ell}_{N}-\frac{1}{4}\left(3n+l_{2}+5l_{3}\right)\log N + (m-1)\log\log N+O(1),
$$ 
where $m\geq 1$. This finishes the proof of Theorem \ref{th:main}.\hfill
$\qed$

%
%

\section*{Acknowledgments}

I am especially grateful to Shaowei Lin for a number of helpful comments and for introducing me to the theory of log-canonical thresholds. I also want to thank Diane Maclagan for helpful discussions.


\bibliographystyle{siam}
\bibliography{../!bibliografie/algebraic_statistics}

\begin{thebibliography}{10}

\bibitem{arnold1985singularities}
{\sc V.~I. Arnold, S.~M. Guse{\u\i}n-Zade, and A.~N. Varchenko}, {\em
  {Singularities of differentiable maps}}, vol.~II, Birkh\"{a}user, 1988.

\bibitem{atiyah1970resolution}
{\sc M.~F. Atiyah}, {\em {Resolution of singularities and division of
  distributions}}, Communications on Pure and Applied Mathematics, 23 (1970),
  pp.~145--150.

\bibitem{bierstone1988semianalytic}
{\sc E.~Bierstone and P.~D. Milman}, {\em {Semianalytic and subanalytic sets}},
  Publications Math{\'e}matiques de l'IH{\'E}S, 67 (1988), pp.~5--42.

\bibitem{chickering1997efficient}
{\sc D.~M. Chickering and D.~Heckerman}, {\em {Efficient approximations for the
  marginal likelihood of Bayesian networks with hidden variables}}, Machine
  Learning, 29 (1997), pp.~181--212.

\bibitem{cooper1992bayesian}
{\sc G.~F. Cooper and E.~Herskovits}, {\em {A Bayesian method for the induction
  of probabilistic networks from data}}, Machine learning, 9 (1992),
  pp.~309--347.

\bibitem{math.CO/0507273}
{\sc E.~Gawrilow and M.~Joswig}, {\em Geometric reasoning with polymake}.
\newblock \url{arXiv:math.CO/0507273}, 2005.

\bibitem{macpherson}
{\sc M.~Goresky and R.~MacPherson}, {\em Stratified {M}orse theory}, vol.~14 of
  Ergebnisse der Mathematik und ihrer Grenzgebiete (3) [Results in Mathematics
  and Related Areas (3)], Springer-Verlag, Berlin, 1988.

\bibitem{heckerman1995lbn}
{\sc D.~Heckerman, D.~Geiger, and D.~M. Chickering}, {\em {Learning Bayesian
  networks: The combination of knowledge and statistical data}}, Machine
  learning, 20 (1995), pp.~197--243.

\bibitem{kollar-sing-pairs}
{\sc J.~Koll{\'a}r}, {\em {Singularities of pairs}}, in Proceedings of Symposia
  in Pure Mathematics, vol.~62, 1997, pp.~221--288.

\bibitem{lazarsfeld2004positivity}
{\sc R.~Lazarsfeld}, {\em {Positivity in algebraic geometry}}, A Series of
  Modern Surveys in Mathematics, Springer Verlag, 2004.

\bibitem{shaowei_rlct}
{\sc S.~Lin}, {\em {Asymptotic Approximation of Marginal Likelihood
  Integrals}}.
\newblock \url{arXiv:1003.5338}, March 2010.
\newblock submitted.

\bibitem{mihaescu2008combinatorics}
{\sc R.~Mihaescu and L.~Pachter}, {\em {Combinatorics of least-squares trees}},
  Proceedings of the National Academy of Sciences of the United States of
  America, 105 (2008), p.~13206.

\bibitem{munkres_manif}
{\sc J.~R. Munkres}, {\em Analysis on manifolds}, Addison-Wesley Publishing
  Company Advanced Book Program, Redwood City, CA, 1991.

\bibitem{sturmfels}
{\sc L.~Pachter and B.~Sturmfels}, eds., {\em Algebraic statistics for
  computational biology}, Cambridge University Press, New York, 2005.

\bibitem{pearl_tarsi86}
{\sc J.~Pearl and M.~Tarsi}, {\em Structuring causal trees}, J. Complexity, 2
  (1986), pp.~60--77.
\newblock Complexity of approximately solved problems (Morningside Heights,
  N.Y., 1985).

\bibitem{rusakov2006ams}
{\sc D.~Rusakov and D.~Geiger}, {\em Asymptotic model selection for naive
  {B}ayesian networks}, J. Mach. Learn. Res., 6 (2005), pp.~1--35 (electronic).

\bibitem{saito2007real}
{\sc M.~Saito}, {\em {On real log canonical thresholds}}.
\newblock \url{arXiv:0707.2308}, 2007.

\bibitem{schwarz1978edm}
{\sc G.~Schwarz}, {\em {Estimating the dimension of a model}}, Annals of
  Statistics, 6 (1978), pp.~461--464.

\bibitem{semple2003pol}
{\sc C.~Semple and M.~Steel}, {\em Phylogenetics}, vol.~24 of Oxford Lecture
  Series in Mathematics and its Applications, Oxford University Press, Oxford,
  2003.

\bibitem{stanley2006enumerative}
{\sc R.~P. Stanley}, {\em {Enumerative combinatorics. Volume I}}, no.~49 in
  Cambridge Studies in Advanced Mathematics, Cambridge University Press, 2002.

\bibitem{watanabe_book}
{\sc S.~Watanabe}, {\em Algebraic Geometry and Statistical Learning Theory},
  no.~25 in Cambridge Monographs on Applied and Computational Mathematics,
  Cambridge University Press, 2009.
\newblock ISBN-13: 9780521864671.

\bibitem{watanabe2010asymptotic}
{\sc S.~Watanabe}, {\em {Asymptotic Equivalence of Bayes Cross Validation and
  Widely Applicable Information Criterion in Singular Learning Theory}}, Arxiv
  preprint arXiv:1004.2316,  (2010).

\bibitem{watanabe_newton}
{\sc K.~Yamazaki and S.~Watanabe}, {\em Newton diagram and stochastic
  complexity in mixture of binomial distributions}, in Algorithmic learning
  theory, vol.~3244 of Lecture Notes in Comput. Sci., Springer, Berlin, 2004,
  pp.~350--364.

\bibitem{pwzsmith2009}
{\sc P.~Zwiernik and J.~Q. Smith}, {\em The semialgebraic description of tree
  models for binary data}.
\newblock \url{arXiv:0904.1980}, May 2010.

\bibitem{pwz2010-identifiability}
{\sc P.~Zwiernik and J.~Q. Smith}, {\em {Tree-cumulants and the geometry of
  binary tree models}}.
\newblock \url{arXiv:1004.4360}, 2010.
\newblock to appear in Bernoulli.

\end{thebibliography}
\end{document}